\newtheorem{theorem}{Theorem}[section]
\newtheorem*{theorem*}{Theorem}
\newtheorem{lemma}[theorem]{Lemma}
\newtheorem{proposition}[theorem]{Proposition}
\newtheorem{corollary}[theorem]{Corollary}
\newtheorem{definition}[theorem]{Definition}
\newtheorem{notation}[theorem]{Notation}
\newtheorem{example}[theorem]{Example}
\newtheorem{remark}[theorem]{Remark}
\def\sup{{\rm sup}}
\def\tU{\rm U}
\def\HH{\mathcal H}
\def\U{\mathcal U}
\def\W{\mathcal W}
\def\bbR{\mathbb R}
\def\bbN{\mathbb N}
\def\bbC{\mathbb C}
\def\bbG{\mathbb G}
\def\bbZ{\mathbb Z}
\def\bbT{\mathbb T}
\def\bb1{\mathds 1}
\newcommand{\blue}[1]{{\color{black} #1}}
\newcommand{\green}[1]{{\color{black} #1}}
\begin{document}

\title{A noncommutative approach to the graphon Fourier transform}

\author[M. Ghandehari]{Mahya Ghandehari}
\address{Department of Mathematical Sciences,
Department of Mathematical Sciences, 
University of Delaware, Newark, DE, USA, 19716}
\email{mahya@udel.edu}

\author[J. Janssen]{Jeannette Janssen}
\address{Department of Mathematics \& Statistics, 
Dalhousie University, Halifax, Nova Scotia, Canada, B3H 3J5}
\email{jeannette.janssen@dal.ca}

\author[N. Kalyaniwalla]{Nauzer Kalyaniwalla}
\address{Faculty of Computer Science, 
Dalhousie University, Halifax, Nova Scotia, Canada, B3H 3J5}
\email{nauzerk@cs.dal.ca}

\date{13 April 2022}

\begin{abstract}
Signal analysis on graphs relies heavily on the graph Fourier transform, which is defined as the projection of a signal onto an eigenbasis of the associated shift operator. Large graphs of similar structure may be represented by a graphon. Theoretically, graphons are limit objects of converging sequences of graphs. 
Our work extends previous research proposing
a common scheme for signal analysis of graphs that are similar in structure to a graphon. We extend a previous definition of graphon Fourier transform, and show that the graph Fourier transforms of graphs in a converging graph sequence converge to the graphon Fourier transform of the limiting graphon.
We then apply this convergence result to signal processing on Cayley graphons. 
We show that Fourier analysis of the underlying group enables the construction of a suitable eigen-decomposition for the graphon, which can be used as a common framework for signal processing on  graphs converging to the graphon.\\

\medskip
\noindent
Keywords: graphons, convergent graph sequences, graph signal processing, graph Fourier transform, graphon Fourier transform

\medskip
\noindent
MSC 2010: Primary  42C15. Secondary 05C50, 94A12.
\end{abstract}

\maketitle

\tableofcontents

\section{Introduction}\label{sec:intro}

The proliferation of networked data has motivated the extension of traditional methods of signal processing to signals defined on graphs. An important operational tool of traditional signal processing is the Fourier transform, which allows for the analysis of signals via their spectral decomposition. The extension of this approach to graphs has been used {successfully} to analyze data on networks,  such as sensor networks, or networks of interactions of chemicals in a cell or of financial transactions.

A graph signal is any complex-valued function on the vertices of the graph, which may represent pollution levels measured by a sensor network, or neural activity levels in regions of the brain  (see \cite{2018:Ortega:GSPOverview} and the references therein). Important properties of signals, such as diffusion of a signal, noise reduction or the concept of smoothness, can be modeled by means of a matrix representing the graph structure.
Such a matrix is referred to as the graph \emph{shift operator}, and is typically taken to be the graph adjacency matrix or the graph Laplacian. A graph Fourier transform can then be defined as the projection of the signal onto an eigenbasis of the graph shift operator. 
See \cite{2018:Ortega:GSPOverview,SandryhailaMoura13} for an overview of recent developments in graph signal processing.

Taking this approach, we can see that the processing of a graph signal rigidly depends on the underlying network. 
Any change in the network requires a re-computation of the eigenbasis of the graph shift operator, which is  expensive. Further, in many cases networks evolve over time, resulting in a sequence of graphs that are structurally similar. These issues heighten the need for an instance-independent framework for handling processing of signals over large graphs. 
In \cite{lovaszszegedy2006}, \emph{graphons} were introduced as limit objects of converging sequences of dense graphs 
\blue{(i.e., graphs $G_n$ that have $\Omega(|V(G_n)|^2)$ edges).}
Graphons retain the large-scale structure of the families of graphs they represent, and are hence useful representatives of these families. 
The use of graphons to guide graph signal processing was first proposed in \cite{MorencyLeus17}, and a graphon Fourier transform was proposed in \cite{ruiz2}.

A graphon is commonly defined as a symmetric, measurable function $w: [0,1]^2 \rightarrow [0,1]$.
More generally, however, one can define a graphon on any standard probability space (\cite[Corollary 3.3]{Borg-Chayes-Lovasz-2010}).\footnote{\blue{A standard probability space is obtained from a  Borel probability space by enlarging its sigma-algebra to include all subsets of its nullsets. A Borel probability space is a probability space that is isomorphic (up to nullsets) to the disjoint union of a closed interval (with the Borel sigma-algebra and the Lebesgue measure) and a countable set of atoms (\cite[Section A.3.1]{lovasz-book}).}}
For practical reasons, and especially to deal  with Cayley graphons in Section \ref{sec:cayley}, we define graphons and graphon signals on general standard probability spaces as follows.
\begin{definition}\label{def:graphon-signal}
Let  $(X,\mu)$ be a standard probability space, and $L^2(X)$ be the associated space of square-integrable functions. 
A function $w:X\times X\to [0,1]$ is called a graphon represented on $X$ if $w$ is measurable and symmetric (i.e.~$w(x,y)=w(y,x)$ almost everywhere). 
A graphon signal on $w$ is a pair $(w,f)$, where $f:X\rightarrow \bbC$ belongs to $L^2(X)$. 
\end{definition}
Morency and Leus in \cite{MorencyLeus17} investigate asymptotic behavior of signal processing for sequences of graph signals converging to a graphon signal.
They use the limit theory of dense graph sequences (\cite{lovaszszegedy2006}) to define the notion of converging graph signals. 
Their original definition is for graphons on $[0,1]$, but naturally extends to the general form of graphons as follows:
Let $w$ be a graphon on a standard probability space $(X,\mu)$.
A sequence of graph signals $\{(G_n, f_n)\}$ is said to converge to the graphon signal $(w , f)$ if there exists a labeling for each  $G_n$ such that
$\|G_n-w\|_\Box\rightarrow 0$ and $\|f^X_n-f\|_2\rightarrow 0$,
where $f^X_n$ is the natural representation of the graph signal $f_n$ as a simple function in $L^2(X)$;
see Section \ref{sec:generalize} for the details of this definition, and Section \ref{subsec:graphon} for the definition of cut norm $\|\cdot\|_\Box$.

Ruiz, Chamon and Ribeiro in \cite{ruiz2}, and more extensively in \cite{RuizChamonRibeiro21}, give a convergence result for the graphon Fourier transform. The main result of \cite{RuizChamonRibeiro21} is that for a sequence of graph signals $\{(G_n,f_n)\}$ that converge to a graphon signal $(w,f)$, the graph Fourier transform of $(G_n,f_n)$ converges to the graphon Fourier transform of $(w,f)$ if the following conditions on $w$ and $f$ are satisfied: 
\begin{itemize}
\item[(i)] the graphon signal $f$ is $c$-bandlimited, 
\item[(ii)] $w$ is a non-derogatory graphon (i.e.~the integral operator associated with $w$ does not have repeated nonzero eigenvalues.)
\end{itemize}
\blue{We generalize this result in Section \ref{sec:generalize}.} Namely, we drop both conditions which were imposed on $w$ and $f$, allowing for a convergence theorem \blue{applicable to all graphons} (see Theorem~\ref{thm:convergence} for a precise statement of our result). 
Even taking the density of non-derogatory graphons in the space of all graphons into account (\cite[Proposition 1]{RuizChamonRibeiro21}), our result genuinely extends the above-mentioned theorem of Ruiz \emph{et al}. Indeed, a linear operation (e.g.~graph Fourier transform in this context) may be continuous on a dense subset (e.g.~the collection of non-derogatory graphons), but fail to be continuous on the whole space (e.g.~all graphons). We note that many important examples of graphons, including many Cayley graphons on non-Abelian groups, have multiple non-zero eigenvalues and thus do not meet the criteria for convergence as obtained by Ruiz \emph{et al}.

To achieve the continuity of the graph Fourier transform on the entire space of graphons, we need to {refine the definition of graph/graphon Fourier transform}.
The theory of signal processing on graphs was originally inspired by Fourier analysis on ${\mathbb Z}_N$, or more generally, harmonic analysis on Abelian groups. Any shortcomings of this approach may be attributed to the fact that graphs lack the structural symmetries of ${\mathbb Z}_N$.
Inspired by Fourier analysis of non-Abelian groups, we replace the concept of ``Fourier coefficients'' by projections onto eigenspaces of the shift operator. This point of view enables us to deal with eigenvalues with multiplicity higher than 1. We then express continuity of the graph Fourier transform in terms of convergence of such projections in suitable norms. We note that every nonzero eigenvalue of a graphon $w$ has finite multiplicity, and the associated projection onto its eigenspace is a finite-rank operator. So our result in Theorem~\ref{thm:convergence} is of the form of convergence of matrices (of finite but increasing size) to finite-rank operators. 

\blue{Our original motivation for establishing Theorem~\ref{thm:convergence} was to develop signal processing on \emph{Cayley graphons},  taking the Fourier analysis of the underlying group into account. We devote Section~\ref{sec:cayley} to this task.}
Cayley graphons, first introduced in \cite{cayley-graphon}, are defined on a (finite or compact) group $\bbG$ according to a connection function defined over $\bbG$. 
Cayley graphons form a natural extension of Cayley graphs.
Generally, graphs sampled from a Cayley graphon are not themselves Cayley graphs. Instead, they can be seen as ``fuzzy versions'' of Cayley graphs, which preserve the symmetries of the group on a large scale, but are locally random. 
In Section~\ref{sec:cayley}, we will show that the group structure of a Cayley graphon may be used to develop a specific framework for signal processing on the graphon, which can then be used to provide an instance-independent framework for graph signal processing.

\blue{The group symmetries make Cayley graphs/graphons appropriate models for many networks.
A prime example of this phenomenon can be seen in ranked data analysis, which has applications in various areas such as image processing (\cite{model-fitting}), recommender systems (\cite{WangSE14}), bioinformatics (\cite{LiWX19,UminskyBGGN19}), and computer vision (\cite{HuangGG09}).
Ranked data can be naturally modeled as signals on Cayley graphs of the permutation group ${\mathbb S}_n$.
The vertex set of such a graph represents all preference rankings of $n$ objects or
candidates. The choice of the generating set for the Cayley
graph formalizes the idea of distance between rankings. An important example of such Cayley graphs is the
permutahedron, where the generating set is the set of all adjacent transpositions. In \cite{permutahedron}, Chen et al.~develop signal processing on the permutahedron, and propose a novel frame construction for analyzing ranked data. 
Moreover, they show that the frame atoms have an interpretable structure, 
in the sense that the analysis coefficients with respect to this frame provide
meaningful information for ranked data. Examples of
interpretations of the analysis coefficients include popularity of
candidates, whether a candidate is polarizing, or whether two
candidates are likely to be ranked similarly. 
For more examples of signal processing on Cayley graphs, see \cite{Rockmore1} for fast Fourier transforms on certain Cayley graphs, and \cite{GGH-frames} for construction of tight windowed Fourier frames for Cayley graphs.}
 
\blue{Cayley graphons can be viewed as the blueprints of graphs which are ``nearly'' Cayley. 
In particular, a Cayley graph can be turned into a Cayley graphon that is piece-wise constant.  Samples from such a graphon will have sets of vertices with similar linking behaviour. In Example \ref{exp:S3} we study a graphon based on a small permutahedron. Vertices in the sampled graphs are partitioned so that each part represents a different permutation.  These graphs can be interpreted as fuzzy versions of the permutahedron.
When the group underlying a Cayley graphon is the torus, we obtain a model that is based on a circular layout of the vertices. 
In Example \ref{exp:Watts-Strogatz} we will see such a graphon. The stochastic graph model represented by the graphon is the well-known Watts-Strogatz ``small world'' model to generate graphs with small diameter and high local clustering. } 

\blue{Providing a framework for signal processing of Cayley graphons that is transferable to the sampled graphs can be very useful in practical applications.}
As a critical step towards developing a framework for signal processing on Cayley graphons, we study the spectral decomposition of Cayley graphons in Section~\ref{sec:cayley}. Using the representation theory of the underlying group,  we first show how to derive eigenvalues and eigenvectors of Cayley graphons. {Namely, in  Theorem~\ref{thm:eigenvector}, we show that the eigenvalues/eigenvectors of a Cayley graphon can be derived from the eigenvalues/eigenvectors of the irreducible representations of the underlying group $\bbG$ applied to the function on $\bbG$ that defines the graphon.} 
Proposition~\ref{prop:basis} then provides us with a basis for $L^2({\bbG})$ which diagonalizes the Cayley graphon (or more precisely, its associated integral operator). The proposed basis is obtained as a combination of coefficient functions of irreducible representations of ${\bbG}$. These are important functions associated with irreducible representations of a locally compact group, and play a central role in the harmonic analysis of non-Abelian groups.
Proposition~\ref{prop:basis} can be applied to Cayley graphs; in that case, it offers an improvement over many earlier results on calculating the eigenvalues and eigenvectors of Cayley graphs, as such results often work under the extra assumption that the generating set of the Cayley graph is closed under conjugation. 
The bases as defined in Proposition~\ref{prop:basis} can be used as a framework for signal analysis of graphs whose structure conforms with the Cayley graphon.

\section{Notations and background}
\subsection{Signal processing on graphs}\label{subsec:graphs}
Let $G$ be a graph on the vertex set $V=\{v_1,\ldots,v_N\}$. A graph signal on $G$ is a function $f:V\to \bbC$, which can also be identified  with a column vector $\left(f(v_1), f(v_2), \cdots, f(v_N)\right)^\top\in \bbC^N$, where $\top$ denotes the transpose of a vector. 
Given a graph $G$ on $N$ nodes, a graph Fourier transform can be defined as the representation of signals on an orthonormal basis for $\bbC^N$ consisting of eigenvectors of the graph shift operator (i.e.~the adjacency matrix or the graph Laplacian). In the present work, we focus our attention on signal processing using the adjacency matrix as the shift operator. 

For the rest of this article, let $A$ denote the adjacency  matrix of a given graph $G$ on $N$ vertices, and fix an orthonormal basis of eigenvectors $\{\phi_i\}_{i=1}^{N}$ associated with eigenvalues $\lambda_1\leq \lambda_2\leq \ldots\leq \lambda_{N}$ of $A$.  The \emph{graph Fourier transform} of a graph signal $f:V\to \bbC$ is defined as the expansion of $f$ in terms of the orthonormal basis $\{\phi_i\}_{i=1}^{N}$. That is,
\begin{equation}\label{GFT}
\widehat{f}(\phi_i) = \langle f, \phi_i\rangle_{\bbC^N} = \sum_{n=1}^N f(v_n)\overline{\phi_i(v_n)},
\end{equation}%
and in this setting the \emph{inverse graph Fourier transform} is given by
\begin{equation}\label{GIFT}
f(v_n) = \sum_{i=1}^{N} \widehat f (\phi_i) \phi_i(v_n).
\end{equation}
The notation  $\widehat{f}(\lambda_i)$ is also used for  $\widehat{f}(\phi_i)$. To avoid possible confusions when $A$ has repeated eigenvalues, we mainly use the notation in \eqref{GFT}.
The above definition of the graph Fourier transform generalizes the classical \emph{discrete Fourier transform}.
Please refer to ~\cite{2013:Sandryhaila:DS,2014:Sandryhaila:BD,SNFOV} for a detailed background on the graph Fourier transform, and ~\cite{2018:Ortega:GSPOverview} for a general overview of graph signal processing.

\subsection{Fourier analysis on compact (not necessarily Abelian) groups}
In Section \ref{sec:cayley}, we develop a framework for signal processing on Cayley graphons, which relies on the Fourier analysis of the underlying group. 
In this section, we give the necessary background on Fourier analysis {of compact groups},
\blue{i.e., topological groups for which the underlying topology is compact and Hausdorff.}

Let $\bbG$ be a compact (not necessarily Abelian) group equipped with its Haar measure. Let $L^2(\bbG)$ denote the set of measurable complex-valued functions on $\bbG$, identified up to sets of measure 0, that are square-integrable. The Banach space $L^2(\bbG)$ forms a Hilbert space when equipped with the inner product $\langle f,g\rangle=\int_G f\overline{g}$. Let $L^1(\bbG)$  denote the set of measurable complex-valued functions on $\bbG$, identified up to sets of measure 0, that are integrable. For $f,g\in L^1(\bbG)$, we define their convolution product to be
\begin{equation}\label{def:convolution}(f*g)(x)=\int_{\bbG} f(y)g(y^{-1}x)\, dy.
\end{equation}
This convolution product turns $L^1(\bbG)$ into a Banach algebra. The Hilbert space $L^2(\bbG)$ and the Banach algebra $L^1(\bbG)$ play a central role in harmonic analysis of (locally) compact groups.

Given a Hilbert space $\HH$, a map $\pi:\bbG\to \U(\HH)$ is called a \emph{unitary representation} of $\bbG$ if $\pi$ is a group homomorphism 
into the group of unitary operators on $\HH$, denoted by $\U(\HH)$. If $\HH={\mathbb C}^n$, we say $\pi$ is an $n$-dimensional representation. Two representations $\pi:\bbG\to \U(\HH_\pi)$ and $\rho:\bbG\to\U(\HH_\rho)$  are said to be \emph{unitarily equivalent} if there exists a unitary map $U:\HH_\pi\to\HH_\rho$ such that 
\[ U \pi(g) U^* = \rho(g), \quad \forall g\in \bbG.\]%
 A representation $\pi$ is called \emph{irreducible} if it does not admit any non-trivial closed $\pi$-invariant subspaces. 
The collection of (equivalence classes of) all irreducible representations of $\bbG$  is denoted by $\widehat{\bbG}$. In the special case where $\bbG$ is Abelian, every irreducible representation is 1-dimensional and the set $\widehat{\bbG}$, together with point-wise multiplication, forms a group called the \emph{dual group} of $\bbG$. For compact groups, every irreducible representation is finite dimensional. However, a compact group $\bbG$ may have representations of different dimensions. Therefore, in the case of a non-Abelian compact group, the collection $\widehat{\bbG}$ does not form a group.

\subsubsection{Fourier and inverse Fourier transforms for compact groups.} \label{subsec:FT-nonAbelian}
Let $f\in{L^1(\bbG)}$. The Fourier transform of $f$, denoted by ${\mathcal F}f$, is defined as a matrix-valued function on $\widehat{\bbG}$. At each $\pi\in\widehat{\bbG}$, the Fourier transform ${\mathcal F}f(\pi)$ is 
a matrix of dimensions $d_\pi\times d_\pi$  with entries in $\bbC$, defined as  
\begin{equation}\label{F-noncommutative}
{\mathcal F}f(\pi):=\pi(f)=\int_{\bbG} f(y)\pi(y)\, dy. 
\end{equation}
Here the integral is taken with respect to the Haar measure of $\bbG$, and is interpreted entry-wise (i.e.~in the weak sense). 
The inverse Fourier transform is given by
\begin{equation}\label{inverse-F-noncommutative}
f(x)=\sum_{\pi\in {\widehat{\bbG}}}d_\pi{\rm Tr}(\pi(x)^*\pi(f)),
\end{equation}
where the equality holds in $L^2(\bbG)$.
We should note that the Fourier and inverse Fourier transforms given in \eqref{F-noncommutative} and \eqref{inverse-F-noncommutative} are defined slightly differently in \cite{1995:Folland:HarmonicAnalysis}. The map $f\mapsto \widecheck{f}$, with $\widecheck{f}(x)=f(x^{-1})$, converts our definition of Fourier transform to the one in \cite{1995:Folland:HarmonicAnalysis}.

As demonstrated in the following proposition, the (matrix-valued) Fourier transform allows us to encode the non-commutative nature of non-Abelian groups.
\begin{proposition}{(Properties of Fourier transform on compact groups)}\label{prop:FT-cpt-group}
Let $\bbG$ be a compact group. For the Fourier and inverse Fourier transforms defined above we have:
\begin{itemize}
\item[(i)] (Parseval equation) For every $f\in L^2(\bbG)$, we have 
$$\|f\|_2^2=\sum_{\pi\in\widehat{\bbG}}d_\pi{\rm Tr}[\pi(f)^*\pi(f)].$$
\item[(ii)] For $f,g\in L^1(\bbG)$ and $\pi\in\widehat{\bbG}$, $\pi(f*g)=\pi(f)\pi(g).$ Here $f*g$ denotes convolution as defined in \eqref{def:convolution}.
\item[(iii)] For $f\in L^1(\bbG)$ and $\pi\in\widehat{\bbG}$, $\pi(f^*)=\pi(f)^*,$ where $f^*(x)=\overline{f(x^{-1})}$. Here, $\pi(f)^*$ denotes the usual adjoint of the matrix $\pi(f)$.
\end{itemize}
\end{proposition}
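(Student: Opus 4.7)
The plan is to handle parts (ii) and (iii) by direct manipulation of the defining integrals, and to obtain part (i) as a consequence of the Peter--Weyl theorem. These three facts are essentially independent of each other, so I would treat them one at a time.

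For (ii), I would start from the definition and unfold both integrals:
\[
\pi(f*g) = \int_{\bbG} (f*g)(x)\,\pi(x)\,dx = \int_{\bbG}\int_{\bbG} f(y)\,g(y^{-1}x)\,\pi(x)\,dy\,dx.
\]
The substitution $z = y^{-1}x$ (i.e.\ $x = yz$) combined with left-invariance of Haar measure transforms the inner integral and, crucially, lets me use the homomorphism property $\pi(yz) = \pi(y)\pi(z)$. After swapping integration order (justified by Fubini, since $f,g \in L^1$ and $\pi$ is bounded on the compact group), the integral factors as $\left(\int f(y)\pi(y)\,dy\right)\left(\int g(z)\pi(z)\,dz\right) = \pi(f)\pi(g)$. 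This step is routine.

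For (iii), I would similarly compute
\[
\pi(f^*) = \int_{\bbG} \overline{f(x^{-1})}\,\pi(x)\,dx
\]
and apply the change of variable $y = x^{-1}$. Compact groups are unimodular and their Haar measure is inversion-invariant, so the measure passes through unchanged. Then the key move is to invoke unitarity of $\pi$ to rewrite $\pi(y^{-1}) = \pi(y)^{-1} = \pi(y)^*$. Pulling the adjoint outside the (entry-wise) integral yields $\pi(f)^*$ exactly. I would need to be careful that the adjoint and the integral commute, but this is clear because the integral is interpreted entry-wise and taking matrix adjoints is continuous.

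Part (i) is the main obstacle, in the sense that it is the only statement not provable by a one-line manipulation: it relies on the deep structural result that the rescaled matrix coefficients $\{\sqrt{d_\pi}\,\pi_{ij}\}_{\pi\in\widehat{\bbG},\,1\le i,j\le d_\pi}$ form an orthonormal basis of $L^2(\bbG)$ (Peter--Weyl). My plan is to invoke this theorem directly. Writing $c_{ij}^{\pi}(f) = \langle f, \sqrt{d_\pi}\,\pi_{ij}\rangle$, a direct computation shows that the matrix $\pi(f)$ has entries $\int f(y)\pi_{ij}(y)\,dy$, which up to scaling are precisely the conjugates of $c_{ij}^{\pi}(f)$; hence $d_\pi\,\mathrm{Tr}[\pi(f)^*\pi(f)] = \sum_{i,j}|c_{ij}^{\pi}(f)|^2$. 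Summing over $\pi\in\widehat{\bbG}$ and applying Parseval's identity for the orthonormal basis gives $\|f\|_2^2$, as claimed. The careful bookkeeping is matching the normalizations in \eqref{F-noncommutative} against the standard Peter--Weyl normalization; once that is checked, the identity follows immediately.
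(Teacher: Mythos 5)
The paper itself gives no proof of Proposition~\ref{prop:FT-cpt-group}: it is quoted as standard background, with the reader referred to \cite{1995:Folland:HarmonicAnalysis, Terras:1999:FourierAnalysisOnFiniteGroups}. So your argument can only be judged on its own merits. Your treatments of (ii) and (iii) are correct and complete as sketched: the substitution $x=yz$ plus the homomorphism property yields the factorization in (ii), with the factors in the correct order for the paper's convention $\pi(f)=\int_{\bbG} f(y)\pi(y)\,dy$; and inversion-invariance of Haar measure (compact groups are unimodular) plus unitarity of $\pi$ gives (iii).

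In (i) there is one bookkeeping slip you should repair. With the paper's convention $\pi_{i,j}=\langle\pi(\cdot)e_i,e_j\rangle$, the entries of $\pi(f)$ are indeed the numbers $\int_{\bbG} f(y)\pi_{i,j}(y)\,dy$, but these are \emph{not} scalar multiples of $\overline{c^{\pi}_{ij}(f)}$ for general complex $f$: since $\langle f,g\rangle=\int f\overline{g}$, one has $\overline{c^{\pi}_{ij}(f)}=\sqrt{d_\pi}\int\overline{f}\,\pi_{i,j}$, whereas the matrix entry is $\int f\,\pi_{i,j}=\langle f,\overline{\pi_{i,j}}\rangle$ --- the coefficient of $f$ against the matrix coefficients of the \emph{conjugate} representation $\overline{\pi}$. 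The identity you want survives, via either of two one-line fixes: (a) note that $\bigl|\int f\pi_{i,j}\bigr|=\bigl|\langle\overline{f},\pi_{i,j}\rangle\bigr|$ and apply Parseval for the basis $\{\sqrt{d_\pi}\pi_{i,j}\}$ to $\overline{f}$, using $\|\overline{f}\|_2=\|f\|_2$; or (b) observe that $\{\sqrt{d_\pi}\,\overline{\pi_{i,j}}\}$ is itself an orthonormal basis of $L^2(\bbG)$ (these are matrix coefficients of the conjugate irreducibles, and $\pi\mapsto\overline{\pi}$ is a dimension-preserving bijection of $\widehat{\bbG}$), so Parseval applies to it directly. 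With either repair, $d_\pi\,{\rm Tr}[\pi(f)^*\pi(f)]=\sum_{i,j}d_\pi\bigl|\int f\pi_{i,j}\bigr|^2$ sums over $\pi\in\widehat{\bbG}$ to $\|f\|_2^2$ as claimed. An alternative route to (i) that bypasses this bookkeeping entirely: by (ii) and (iii), $\pi(f)\pi(f)^*=\pi(f*f^*)$, and evaluating the inversion formula \eqref{inverse-F-noncommutative} at the identity element gives $\|f\|_2^2=(f*f^*)(e)=\sum_{\pi}d_\pi{\rm Tr}[\pi(f)^*\pi(f)]$, using that $f*f^*$ is continuous and positive-definite for $f\in L^2(\bbG)$.
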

Let $\bbG$ be a compact (not necessarily Abelian) group, and $\pi$ be an irreducible representation of $\bbG$ on the Hilbert space $\bbC^{d_\pi}$.
Let $\{e_k\}_{k=1}^{d_\pi}$ be the standard basis for $\bbC^{d_\pi}$, and define the \emph{coefficient function} of $\pi$ associated with $e_i, e_j$ to be the complex-valued function on $\bbG$ defined as 
\begin{equation}\label{eq:coefficient}
\pi_{i,j}(\cdot):= \langle\pi(\cdot)e_i, e_j\rangle_{\bbC^{d_\pi}}.
\end{equation}

\begin{proposition}{(Schur's orthogonality relations \cite[Theorem 5.8]{1995:Folland:HarmonicAnalysis})}\label{prop:Schur-cpt-group}
For every $n$, let $\tU_n(\bbC)$ denote the set of all unitary matrices of size $n$. 
Let  $\pi:\bbG\to \tU_n(\bbC)$ and $\rho:\bbG\to \tU_m(\bbC)$ be inequivalent, irreducible unitary representations. Then 
\begin{itemize}
    \item[(i)] $\langle\pi_{i,j}, \rho_{r,s}\rangle_{L^2(\bbG)}=0$ for all $1\leq i,j\leq d_\pi$ and $1\leq r,s\leq d_\rho$,
    \item[(ii)] $\langle\pi_{i,j}, \pi_{r,s}\rangle_{L^2(\bbG)} = \frac{1}{d_\pi}\delta_{i,r} \delta_{j,s}$, 
\end{itemize}
where $\delta_{i,j}$ is the Kronecker delta function.
Consequently, the collection $\big\{\sqrt{d_\pi}\pi_{i,j}\big\}_{\pi\in \widehat \bbG, i,j=1,\cdots,d_\pi}$ forms an orthonormal (Hilbert) basis for $L^2(\bbG)$.
\end{proposition}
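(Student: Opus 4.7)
The plan is to prove the orthogonality relations (i)--(ii) via the classical averaging-plus-Schur argument, and then deduce completeness from the Peter--Weyl theorem.

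For the orthogonality, given $\pi$ and $\rho$ as in the statement, I would associate to any $d_\rho\times d_\pi$ complex matrix $P$ the averaged linear map
$$T_P := \int_{\bbG} \rho(g^{-1})\, P\, \pi(g)\, dg,$$
interpreted entry-wise. Using right-invariance of the Haar measure on the compact group $\bbG$ and the substitution $g\mapsto gh$, one checks that $T_P\, \pi(h) = \rho(h)\, T_P$ for every $h\in\bbG$, so $T_P$ intertwines $\pi$ and $\rho$. Next I would compute the matrix entries of $T_P$ for the choice $P = E_{s,j}$ (the elementary matrix with a single $1$ in position $(s,j)$). The unitarity identity $\rho(g^{-1}) = \rho(g)^*$ together with the convention $\pi_{i,j}(g) = \langle \pi(g)e_i, e_j\rangle$ yields $(T_P)_{r,i} = \langle \pi_{i,j}, \rho_{r,s}\rangle_{L^2(\bbG)}$, so the desired inner product of coefficient functions becomes a matrix entry of an intertwiner.

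Schur's lemma now closes both cases. If $\pi$ and $\rho$ are inequivalent, Schur forces $T_P = 0$ for every $P$, which is (i). If $\pi = \rho$, Schur gives $T_P = \lambda_P I$ for some scalar $\lambda_P$; taking traces on both sides and using the cyclic property together with $\pi(g)\pi(g^{-1})=I$ gives $\mathrm{tr}(T_P) = \mathrm{tr}(P) = \delta_{j,s}$, so $\lambda_P = \delta_{j,s}/d_\pi$. Reading off the $(r,i)$ entry then yields (ii).

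For the ``consequently'' part, orthonormality of $\{\sqrt{d_\pi}\pi_{i,j}\}$ is immediate from (i)--(ii), so only completeness in $L^2(\bbG)$ remains. Here I would invoke the Peter--Weyl theorem: for a suitable self-adjoint continuous kernel $f$, the convolution operator on $L^2(\bbG)$ is compact and commutes with left translation, so each of its finite-dimensional nonzero eigenspaces is left-invariant and decomposes into irreducible subrepresentations; letting $f$ range over an approximate identity shows that the linear span of matrix coefficients is dense in $L^2(\bbG)$. The main obstacle is precisely this density step: the orthogonality relations are a clean application of Schur's lemma, whereas completeness genuinely depends on the Peter--Weyl machinery (compactness of convolution operators, the spectral theorem for compact self-adjoint operators, and the interplay between left and right translation). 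In keeping with the citation to \cite[Theorem 5.8]{1995:Folland:HarmonicAnalysis}, I would invoke Peter--Weyl as a black box rather than reconstruct this machinery here.
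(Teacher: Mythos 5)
The paper states this proposition as a classical result quoted from \cite[Theorem 5.8]{1995:Folland:HarmonicAnalysis} and gives no proof of its own, so there is nothing internal to compare against; your argument is the standard averaging-plus-Schur proof used in the cited reference, and it is correct: the intertwining property of $T_P$, the entry computation $(T_P)_{r,i}=\langle\pi_{i,j},\rho_{r,s}\rangle_{L^2(\bbG)}$ under the paper's convention $\pi_{i,j}(\cdot)=\langle\pi(\cdot)e_i,e_j\rangle$, and the trace normalization $\lambda_P=\delta_{j,s}/d_\pi$ all check out. Deferring completeness to the Peter--Weyl theorem is likewise appropriate and consistent with how the paper treats this statement.
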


See \cite{1995:Folland:HarmonicAnalysis, Terras:1999:FourierAnalysisOnFiniteGroups} for full details of Fourier analysis of non-Abelian groups.

\subsection{Graph limit theory}\label{subsec:graphon}
Let ${\mathcal W}_0$ denote the set of all {graphons} on $[0,1]^2$, that is, the set of all  measurable functions $w: [0,1]^2 \to [0,1]$ that are symmetric, 
i.e.~$w(x, y) = w(y, x)$ for almost every point $(x, y)$ in $[0, 1]^2$. 
Let ${\mathcal W}$ denote the (real) linear span of ${\mathcal W}_0$. Every graph can be identified with a $0/1$-valued graphon as follows.

\begin{definition}\label{def:wG}
Let  $G$ be a graph on $n$ vertices labeled $ \{ 1,2,\dots ,n\}$. 
The $0/1$-valued graphon $w_G$  associated with $G$ is defined as follows: split $[0, 1]$ into $n$ equal-sized intervals $\{I_i\}_{i=1}^n$.
For every $i, j\in \{1,\ldots, n\}$, $w_G$ attains 1 on $I_i \times I_j$ precisely when vertices with labels $i$ and $j$ are adjacent. Note that $w_G$ depends on the labeling of the vertices of $G$, that is, relabeling the vertices of $G$ results in a different graphon.
\end{definition}

The topology described by convergent (dense) graph sequences can be formalized by endowing ${\mathcal W}$ with the cut-norm, introduced in \cite{cut-norm}. For  $w \in {\mathcal W}$, the cut-norm is defined as:
\begin{equation}
\label{cutnorm} 
\| w \|_{\Box}= \sup_{S,T \subset [0,1]}\left|\int_{S \times T} w(x,y)\,  dxdy \right|,
\end{equation}
 where the supremum is taken over all measurable subsets $S,T$ of $[0,1]$. \blue{Note that in the definition of the cut-norm, the sets $S$ and $T$ do not need to be of the same size.} To develop an unlabeled  graph limit theory, the cut-distance between $u,w\in{\mathcal W}$  is defined as follows.
 \begin{equation}
\label{cutdistance} 
\delta_{\Box}(u,w)= \blue{\inf}_{\sigma\in \Phi}\|u^\sigma-w\|_\Box,
\end{equation}
where $\Phi$ is the space of all measure-preserving bijections on $[0,1]$, and $w^\sigma(x,y)=w(\sigma(x),\sigma(y))$. This definition ensures that $\delta_\Box(w,u)=0$ when the graphons $w$ and $u$ are associated with the same graph $G$ given two different vertex labelings. In general, two graphons $u$ and $w$ are said to be \emph{$\delta_\Box$-equivalent} (or equivalent, for short), if $\delta_\Box(u,w)=0$.

It is known  that a graph sequence $\{ G_n\}$ converges in the sense of Lov\'{a}sz-Szegedy 
whenever the corresponding sequence of graphons $\{w_{G_n}\}$ is $\delta_\square$-Cauchy \blue{(\cite[Lemma 5.3]{borgsI2008})}. 
The limit object for such a convergent sequence can be represented as a graphon in ${\mathcal W}_0$ (not necessarily corresponding to a graph). That the graph sequence $\{G_n\}$ is convergent 
to a limit object $w \in {\mathcal W}_0$ is equivalent to $\delta_\Box(w_{G_n},w)\rightarrow 0$ as $n$ tends to infinity. This, in turn, is equivalent to the 
existence of suitable labelings for the vertices of each of the graphs $G_n$ for which we have
\begin{equation}
\label{cutdist} 
 \lVert w_{G_n} - w \rVert_{\square} = \underset{S,T \subset [0,1]} \sup \Bigl \lvert \int_{S \times T} ( w_{G_n} - w ) \Bigr \rvert \rightarrow 0.
\end{equation}
See \cite[Theorem 2.3]{BCLSV2011} for the above convergence results.

A graphon $w$ can be interpreted as a probability distribution on random graphs, sampled via the \emph{$w$-random graph} process ${\mathcal G}(n,w)$. 
The concept of {$w$-random graphs} was introduced in \cite{lovaszszegedy2006}, as a tool for generating examples of convergent graph sequences. 
For a graphon $w$, we define the random process ${\mathcal G}(n,w)$ as follows. \blue{Given the vertex set} with labels $\{ 1,2,\dots ,n\}$, edges are formed according to $w$ in two steps. First, each vertex $i$ is assigned a value $x_i$ drawn uniformly at random from $[0,1]$. Next, for each pair of vertices with labels $i<j$ independently, an edge $\{ i,j\}$ is added with probability $w(x_i,x_j)$. 
It is known that the sequence $\{{\mathcal G}(n,w)\}_n$  almost surely forms a convergent graph sequence, for which the limit object is the graphon $w$
(see \cite{lovaszszegedy2006}). 

\green{Graphons can be represented on any standard probability space $(X,\mu)$ rather than the usual choice $[0,1]$.
As given in Definition \ref{def:graphon-signal}, a function $w:X\times X\to [0,1]$ is called a graphon represented on $X$ if $w$ is measurable and symmetric. The concepts of cut-norm, cut-distance and $w$-random graphs, for a graphon $w$ on $X$, are defined analogously to the corresponding concepts for ${\mathcal W}_0$. Representing graphs and graphons on a particular probability space $X$ can offer insights on their geometric structure, which may be lost otherwise. Cayley graphons, as defined in Definition \ref{def:cayley-graphon}, provide natural examples of this phenomenon. 

There is an easy correspondence between the representation of a graphon on an atom-free standard probability space and on $[0,1]$. Let $(X,\mu)$ be an atom-free standard probability space. 
It is well-known that $X$ is isomorphic (mod 0) to the uniform probability space $[0,1]$. Let $\sigma_X$ be a fixed isomorphism (mod 0) between $[0,1]$ and $X$, that is, there are measure-zero sets $A_1\subseteq [0,1]$ and $A_2\subseteq X$ and an invertible map $\sigma_X:[0,1]\setminus A_1\to X\setminus A_2$ such that both $\sigma_X$ and its inverse are measurable and measure-preserving. Now if $w:X\times X\rightarrow [0,1]$ is a graphon on $(X,\mu)$, then 
$w_0$, defined as follows, is a representation of $w$ on $[0,1]$:
\begin{equation}\label{eq:Cayley-standard}
w_0:[0,1]^2\rightarrow [0,1], \quad w_0(x,y)=\left\{\begin{array}{cc}
   w(\sigma_X(x),\sigma_X(y))  & \mbox{ if } x,y\in [0,1]\setminus A_1, \\
   0  & \mbox{otherwise}.
\end{array}\right. 
\end{equation}
Note that the value of a graphon on a null set does not affect its graph-limit-theoretic behavior, as graphons which are almost everywhere equal belong to the same $\delta_\Box$-equivalence class. 
So, the random graphs ${\mathcal G}(n,w_0)$ and ${\mathcal G}(n,w)$ are equivalent. 

On the other hand, a (labeled) graph $G$ on the vertex set $V(G)=\{1,\ldots, n\}$ can be identified with a 0/1-valued graphon $w_{G,X}$ on $X$ as well. %
Namely, 
\begin{equation}\label{eq:W-G,X}
w_{G,X}(x,y)=\left\{\begin{array}{ll}
w_G(\sigma_X^{-1}(x),\sigma_X^{-1}(y)) & \text{for }x,y\in X\setminus A_2,\\
0 & \text{on the remaining null set,}
\end{array}\right.
\end{equation}
where $w_G\in {\mathcal W}_0$ is the graphon associated with $G$ {as in Definition~\ref{def:wG}}.
}

For a comprehensive account of dense graph limit theory, we refer the reader to \cite{lovasz-book}.

\subsection{Cayley graphons}\label{subsec:general-graphon}
In most cases, it is beneficial to represent a Cayley graphon associated with a (compact) group $\bbG$ on the probability space provided by $\bbG$. 
\begin{definition}\label{def:cayley-graphon}
Let $\bbG$ be a second countable compact group. Then $\bbG$ equipped with its Haar measure forms a standard probability space.
Let $\gamma:\bbG\rightarrow [0,1]$ be a measurable function such that $\gamma(x)=\gamma(x^{-1})$. 
Then the graphon $w:\bbG\times \bbG\to [0,1]$ defined as $w(x,y)=\gamma(xy^{-1})$ is called the Cayley graphon defined by  $\gamma$ on the group $\bbG$, and the function $\gamma$ is called a Cayley function.
\end{definition}
When $\bbG$ is a second countable, infinite, compact group, the probability space provided by $\bbG$, together with its Haar measure, is standard and atom-free. 
\blue{Therefore, we can use the approach leading to Equation~\eqref{eq:Cayley-standard} to represent a Cayley graphon on $\bbG$ as a graphon on $[0,1]^2$.}
Now suppose $\bbG$ is a finite group of size $N$, and $w_{\bbG,\gamma}:\bbG\times \bbG\rightarrow [0,1]$ is a Cayley graphon on $\bbG$ defined by the function $\gamma:\bbG\rightarrow [0,1]$. The graphon $w_{\bbG,\gamma}$ is clearly a step function, and can be represented as a step graphon $w_0$ on $[0,1]$ as follows. Split $[0, 1]$ into $N$ equal-sized intervals $\{I_s\}_{s\in\bbG}$, labeled by the elements of $\bbG$. This partition defines the map $\sigma_\bbG: [0,1]\rightarrow \bbG$ as $x\mapsto s$ if and only if $x\in I_s$. The function $\sigma_\bbG$ is a measure-preserving (not invertible) map, which allows the representation of
$w_{\bbG,\gamma}$ on $[0,1]$, defined as below:
$$w_0(x,y)=w_{\bbG,\gamma}(\sigma_\bbG(x),\sigma_\bbG(y)),$$
that is, for every $s,t\in \bbG$, the graphon $w_0$ attains the value $\gamma (st^{-1})$ on $I_s \times I_t$. 

We can sample from a Cayley graphon on a finite group via the $w$-random graph ${\mathcal G}(n,w_0)$ or, equivalently, via the $w$-random graph ${\mathcal G}(n,w_{\bbG, \gamma})$. 
To form $G\sim {\mathcal G}(n,w_{\bbG, \gamma})$, assign to each vertex with label $i\in\{1,2,\dots ,n\}$ a group element $x_i\in \bbG$ selected uniformly at random. Next, each pair of vertices with labels $i<j$ are linked independently with probability $\gamma(x_ix_j^{-1})$. The assignment of the group elements to the vertices
of $G$ can be viewed as a natural partition of the vertex set of $G$ into $|\bbG|$ subsets.
The connection probability between two vertices is then completely determined by the subsets they belong to.

\begin{remark}\label{rem:every-graphon-in-W0}
The previous paragraphs consider the cases where either the probability space is atom-free, or it is consisting entirely of atoms. In general, any graphon $w:X\times X\rightarrow [0,1]$ represented on a standard probability space $(X,\mu)$ has a $\delta_\Box$-equivalent representation $w_0$ in ${\mathcal W}_0$.  Indeed, every standard probability space is isomorphic mod 0 to a disjoint union of a closed interval (equipped with the Lebesgue measure) and a countable set of atoms. A combination of the above arguments for finite and infinite compact groups can be easily adjusted to produce a (not necessarily injective) measure-preserving map $\sigma_X:[0,1]\to X$. 
The representation $w_0\in {\mathcal W}_0$ is then defined as $w_0(x,y)=w(\sigma_X(x),\sigma_X(y))$.
\end{remark}

\subsection{Spectral decomposition of graphons}\label{sec:SpectralDecompGraphons}
For the rest of this article, we consider graphons in their general form, i.e.~represented on a standard measure space $(X,\mu)$. 
We may assume that $X$ is infinite; as seen in the previous section, graphons derived from finite graphs or groups can be represented as graphons on the infinite probability space $[0,1]$.
Every graphon $w:X\times X\to [0,1]$ can act as the kernel of an integral operator on the Hilbert space $L^2(X)$ as follows:
$$T_w: L^2(X)\to L^2(X), \quad T_w(\xi)(x)=\int_X w(x,y)\xi(y)\, d\mu(y), \mbox{ for } \xi\in L^2(X), x\in X.$$ 
By removing a null set if necessary, we can assume \emph{wlog} that $X$ is chosen so that $L^2(X)$ is separable. In particular, if $X$ is a locally compact group, we assume it is second countable. This assumption guarantees the existence of a finite or countable orthonormal basis for $L^2(X)$.  

Since $w\in L^2(X\times X)$ and as $w$ is real-valued and symmetric, the  Hilbert-Schmidt operator $T_w$ is self-adjoint. In addition, the operator norm of $T_w$ is $\|w\|_\infty$, which is bounded by 1. Thus, $T_w$ has a countable spectrum lying in the interval $[-1,1]$ for which 0 is the only possible accumulation point. We label the nonzero eigenvalues of $T_w$ as follows:
\begin{eqnarray}\label{eq:ordering}
1\geq \lambda_1(w)\geq \lambda_2(w)\geq \ldots\geq 0\geq \ldots \geq \lambda_{-2}(w)\geq \lambda_{-1}(w)\geq -1
\end{eqnarray}
Note that the set of positive eigenvalues of $T_w$ may be finite. In that case, we pad the sequence with 0's at the end, so we can view it as an infinite sequence. {We do} similarly for the set of negative eigenvalues. This arrangement is important for our discussion of the convergence of spectra for converging sequences of dense graphs (e.g.~in Theorem~\ref{thm:spectrum}).

Using spectral theory for compact operators, we see that $L^2(X)$ admits an orthonormal basis containing eigenvectors of $T_w$; this results in a spectral decomposition for $T_w$. More precisely, let $I_w\subseteq {\mathbb Z}^*$ be the indices in \eqref{eq:ordering} enumerating nonzero eigenvalues of $T_w$, and let $\{\phi_i\}_{i\in I_w}$ be an orthonormal collection of associated eigenvectors. Then we have
\begin{equation}\label{eq:spectral-decom}
T_w=\sum_{i\in I_w}\lambda_i(w) \, \phi_i\otimes \phi_i,
\end{equation}
where $\phi_i\otimes \phi_i$ denotes the rank-one projection on $L^2(X)$ defined as $(\phi_i\otimes\phi_i)(\xi)=\langle\xi,\phi_i\rangle\phi_i$ for every $\xi\in L^2(X)$. Note that the infinite sum in the above spectral decomposition should be interpreted as operator-norm convergence in the space of bounded operators on $L^2(X)$. Since $T_w$ is a Hilbert-Schmidt operator, the spectral decomposition sum converges in the Hilbert-Schmidt norm as well. In particular, given the spectral decomposition (\ref{eq:spectral-decom}), we have 
\begin{equation}\label{eq:spectral-l2}
w=\sum_{i\in {I}_w}\lambda_i(w) \, \phi_i\otimes \phi_i,
\end{equation}
where $\phi_i\otimes \phi_i\in L^2(X\times X)$ is defined to be $(\phi_i\otimes \phi_i)(x,y)=\phi_i(x)\phi_i(y)$, and the convergence of the infinite sum is interpreted as convergence in $L^2(X\times X)$. 
Note that by slight abuse of notation, we use $\phi_i\otimes \phi_i$ to denote a rank-one projection on $L^2(X)$ or its associated kernel in $L^2(X\times X)$.

For background material on compact operators and their spectral theory, we refer to \cite[Chapters 13--14]{Bollobas-book}. 

We finish the discussion about spectra by quoting two theorems on convergence of spectra.
\begin{theorem}{\cite[Theorem 11.54]{lovasz-book}}\label{thm:spectrum}
Let $\{w_n\}_{n\in\bbN}$ be a sequence of graphons converging to a graphon $w$ in $\delta_\Box$-distance. Then for every fixed $i\in\bbZ^*$, we have %
$$\lambda_i(w_n)\rightarrow \lambda_i(w)\ \mbox{ as } \ n\rightarrow \infty,$$
{where the eigenvalues of each graphon are indexed as in \eqref{eq:ordering}.}
\end{theorem}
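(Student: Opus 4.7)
The plan is to prove Schatten-$4$ norm convergence of the associated integral operators, after which the theorem follows from the Lidskii--Mirsky inequality for compact self-adjoint operators. The central identity I exploit is that for any symmetric real-valued $u\in L^\infty(X\times X)$,
\[
\|T_u\|_{S_4}^4 \;=\; \mathrm{tr}(T_u^4) \;=\; \sum_{i\in\bbZ^*}\lambda_i(T_u)^4 \;=\; t(C_4,u),
\]
where $t(C_4,u)=\int_{X^4} u(x_1,x_2)u(x_2,x_3)u(x_3,x_4)u(x_4,x_1)\, d\mu^{\otimes 4}$ is the $C_4$-homomorphism density. Because $u\mapsto T_u$ is linear, applying this with $u=w_n-w$ gives $\|T_{w_n}-T_w\|_{S_4}^4 = t(C_4,w_n-w)$.

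I would first relabel each $w_n$ so that $\|w_n-w\|_\Box\to 0$, which is possible because $\delta_\Box(w_n,w)\to 0$. Expanding $\prod_{e\in E(C_4)}(w_n-w)(x_e,y_e)$ by multilinearity yields
\[
t(C_4,w_n-w)\;=\;\sum_{S\subseteq E(C_4)}(-1)^{4-|S|}\,Q_S(w_n,w),
\]
where $Q_S(w_n,w)$ is the density of $C_4$ with edges in $S$ decorated by $w_n$ and edges outside $S$ decorated by $w$. Since $\sum_S(-1)^{4-|S|}=0$, this sum is unchanged by subtracting $t(C_4,w)$ inside each term. The counting lemma of \cite[Lemma~10.23]{lovasz-book} in its multi-colored form (apply the single-edge telescoping argument $|S|$ times, replacing one $w_n$-factor by $w$) yields $|Q_S(w_n,w)-t(C_4,w)|\le C|S|\,\|w_n-w\|_\Box$ for an absolute constant $C$, so $|t(C_4,w_n-w)|\le C\,(\sum_S|S|)\,\|w_n-w\|_\Box\to 0$. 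Combined with the central identity, $\|T_{w_n}-T_w\|_{S_4}\to 0$. The Lidskii--Mirsky inequality for compact self-adjoint operators then gives
\[
\sum_{i\in\bbZ^*}|\lambda_i(w_n)-\lambda_i(w)|^4 \;\le\; \|T_{w_n}-T_w\|_{S_4}^4 \;\longrightarrow\;0,
\]
with eigenvalues on both sides enumerated as in \eqref{eq:ordering}, which lists them in joint decreasing order along the index sequence $1,2,\ldots,-2,-1$. This forces $\lambda_i(w_n)\to\lambda_i(w)$ for every fixed $i\in\bbZ^*$.

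The main technical hurdle is the multi-colored counting-lemma estimate: after isolating one edge factor in the telescoping, one must bound $\bigl|\int(w_n-w)(x_e,y_e)\,H(x_e,y_e)\,dx_e\,dy_e\bigr|$ for a kernel $H$ obtained by partial integration of products of bounded factors. The cut-norm duality delivers this bound, with an absolute constant, once all kernels involved are uniformly bounded in $L^\infty$, which is the case here since $w_n,w\in[0,1]$; this is the only place where the telescoping constants enter, and the precise constant $32$ plays no role in the argument. A secondary routine point, handled by finite-rank truncation of $T_{w_n}$ and $T_w$, is extending the finite-dimensional Lidskii--Mirsky inequality to compact self-adjoint operators indexed as in \eqref{eq:ordering}.
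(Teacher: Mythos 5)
The paper does not prove this statement at all: it is quoted verbatim from \cite[Theorem 11.54]{lovasz-book}, so there is no internal proof to compare against. Judged on its own, your argument is correct and is a genuinely different (and more quantitative) route than the moment-based proof in the cited source, which deduces eigenvalue convergence from the convergence of the cycle densities $t(C_k,\cdot)$ for all $k$. Your chain --- relabel so that $\|w_n-w\|_\Box\to 0$, use $\|T_{w_n}-T_w\|_{S_4}^4=\mathrm{tr}(T_{w_n-w}^4)=t(C_4,w_n-w)$, control the latter by the cut norm, and finish with an eigenvalue perturbation inequality --- is sound, and it buys you the stronger conclusion $\sum_{i\in\bbZ^*}|\lambda_i(w_n)-\lambda_i(w)|^4\to 0$ rather than mere pointwise convergence. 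Two remarks. First, the $16$-term multilinear expansion with the alternating-sum cancellation is more work than needed: writing $u=w_n-w$ and fixing $x_3,x_4$, the inner integral over $(x_1,x_2)$ has the form $\int u(x_1,x_2)f(x_1)g(x_2)$ with $\|f\|_\infty,\|g\|_\infty\le 1$, so the functional form of the cut norm gives $|t(C_4,u)|\le 4\|u\|_\Box$ directly. Second, the only step that is not fully routine is the extension of the Lidskii--Mirsky inequality to compact self-adjoint operators with the two-sided padded enumeration of \eqref{eq:ordering}; the finite-rank truncation does work, but for the conclusion actually claimed in the theorem you can bypass it entirely, since Weyl's perturbation bound $|\lambda_i(T_{w_n})-\lambda_i(T_w)|\le \|T_{w_n}-T_w\|_{\mathcal B(L^2(X))}\le \|T_{w_n}-T_w\|_{S_4}$ holds for each fixed $i\in\bbZ^*$ directly from the min--max characterization compatible with this enumeration. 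With that simplification noted, your proof stands.
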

A more careful analysis of convergence of spectra was given in \cite{Szegedy-spectra} where the convergence of eigenspaces was shown. 
For a graphon $w$ with spectral decomposition as in \eqref{eq:spectral-decom} and a positive number $\alpha>0$, define
\begin{equation}\label{eq:cut-off}
[w]_\alpha=\sum_{\{i\in {\mathbb Z}:\ |\lambda_i(w)|>\alpha\}}\lambda_i(w) \, \phi_i\otimes \phi_i.
\end{equation}
\begin{theorem}{\cite[Proposition 1.1]{Szegedy-spectra}}\label{thm:szegedy-spectra}
Let $\{w_n\}_{n\in\bbN}$ be a sequence of graphons.  Then the following two statements are equivalent:
\begin{itemize}
\item[(i)]  The sequence $\{w_n\}_{n\in\bbN}$ converges to $w$ in cut-norm.
\item[(ii)] There is a decreasing positive real sequence $\{\alpha_n\}_{n\in\bbN}$ approaching to 0 such that for every $j\in\bbN$, we have 
$\|[{w_n}]_{\alpha_j}-[w]_{\alpha_j}\|_{L^2(X\times X)}\rightarrow 0$.
\end{itemize}
Furthermore, in the second statement the cut-norm limit $w$ of $\{w_n\}_{n\in\bbN}$ can be computed as 
$$w=\lim_{j\rightarrow \infty}\left(\lim_{i\rightarrow \infty}[w_i]_{\alpha_j}\right)$$
converging in $L^2(X\times X)$. 
\end{theorem}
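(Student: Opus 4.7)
The plan is to treat the two implications separately and then address the explicit limit formula at the end.

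\textbf{Direction (ii) $\Rightarrow$ (i).} The strategy is a three-term triangle inequality,
\[
\|w_n-w\|_\Box \leq \|w_n-[w_n]_{\alpha_j}\|_\Box + \|[w_n]_{\alpha_j}-[w]_{\alpha_j}\|_\Box + \|[w]_{\alpha_j}-w\|_\Box,
\]
together with two elementary facts about the cut-norm. First, the middle term is controlled by (ii) through the crude bound $\|u\|_\Box \leq \|u\|_{L^2(X\times X)}$, which comes from Cauchy--Schwarz applied to $\int_{S\times T} u = \langle u,\mathbf{1}_{S\times T}\rangle$. Second, the outer two terms are controlled by the key lemma $\|u - [u]_\alpha\|_\Box \leq \alpha$ for every graphon $u$ and every $\alpha>0$. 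I would prove this lemma by noting that the self-adjoint operator $T_{u-[u]_\alpha}$ has operator norm at most $\alpha$ (all its nonzero eigenvalues lie in $[-\alpha,\alpha]$), so for measurable $S,T\subseteq X$,
\[
\Bigl|\int_{S\times T}(u-[u]_\alpha)\Bigr| = \bigl|\langle \mathbf{1}_T, T_{u-[u]_\alpha}\mathbf{1}_S\rangle\bigr| \leq \alpha\,\mu(S)^{1/2}\mu(T)^{1/2}\leq \alpha.
\]
Sending $n\to\infty$ first and then $j\to\infty$ yields (i).

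\textbf{Direction (i) $\Rightarrow$ (ii).} The first move is to choose the sequence $\{\alpha_j\}_{j\in\bbN}$ strictly decreasing to $0$ so that each $\alpha_j$ lies in the resolvent set of $T_w$; this is possible because $\sigma(T_w)\setminus\{0\}$ is at most countable and accumulates only at $0$. By Theorem~\ref{thm:spectrum}, for each fixed $j$ and all sufficiently large $n$, the eigenvalues of $T_{w_n}$ with modulus exceeding $\alpha_j$ are in one-to-one correspondence with those of $T_w$ and converge to them. What remains is the genuinely hard step: showing that the associated spectral projections converge so strongly that $\|[w_n]_{\alpha_j} - [w]_{\alpha_j}\|_{L^2(X\times X)}\to 0$. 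My plan is to combine three ingredients. First, cut-norm convergence of graphons forces convergence of every homomorphism density, and hence of every moment $\mathrm{tr}(T_{w_n}^k)=\sum_i\lambda_i(w_n)^k$; in particular $\|[w_n]_{\alpha_j}\|_{HS}^2=\sum_{|\lambda_i(w_n)|>\alpha_j}\lambda_i(w_n)^2$ converges to $\|[w]_{\alpha_j}\|_{HS}^2$. Second, a diagonal subsequence argument extracts weak $L^2$-limits of a uniformly bounded system of orthonormal eigenfunctions of the $T_{w_n}$ corresponding to each surviving eigenvalue. Third, matching the norm convergence from the first step with the weak convergence from the second promotes weak convergence to strong $L^2$-convergence of the projected kernels, giving $\|[w_n]_{\alpha_j} - [w]_{\alpha_j}\|_{L^2}\to 0$ along the subsequence; since every subsequence admits a further subsequence with this property, the full sequence converges.

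\textbf{Main obstacle.} The serious difficulty is exactly this eigenspace convergence. Cut-norm convergence is strictly weaker than operator-norm (or Hilbert--Schmidt) convergence on graphons, as witnessed by the Erd\H{o}s--R\'enyi sequence $\mathcal{G}(n,1/2)$, which converges to the constant $1/2$ graphon in cut norm but whose integral operators do not converge in operator norm. Consequently, standard perturbation-theoretic arguments (for example, the Riesz projection expressed as a contour integral $\tfrac{1}{2\pi i}\oint_{\Gamma}(z-T_{w_n})^{-1}\,dz$ around the isolated eigenvalues $|\lambda|>\alpha_j$) are unavailable, since those depend on operator-norm control of the resolvent. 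The workaround is graphon-specific: one must exploit the equivalence between cut-norm convergence and convergence of the full sequence of moments to extract quantitative spectral information that ordinary cut-norm convergence does not transparently provide.

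\textbf{The explicit formula.} For the last assertion, observe that from (ii) the inner limit $\lim_{i\to\infty}[w_i]_{\alpha_j}$ equals $[w]_{\alpha_j}$ in $L^2(X\times X)$. Since the spectral decomposition \eqref{eq:spectral-l2} converges in Hilbert--Schmidt norm, $[w]_{\alpha_j}\to w$ in $L^2(X\times X)$ as $j\to\infty$, completing the double-limit identity.
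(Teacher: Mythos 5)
First, a point of reference: the paper itself offers no proof of this statement --- it is quoted from Szegedy \cite{Szegedy-spectra} as an imported result --- so your proposal can only be measured against that source rather than against an argument in the paper.

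Your direction (ii) $\Rightarrow$ (i) is correct and essentially complete: the bound $\|u\|_\Box\le\|u\|_{L^2(X\times X)}$ and the tail estimate $\|u-[u]_\alpha\|_\Box\le\|T_{u-[u]_\alpha}\|_{{\mathcal B}(L^2(X))}\le\alpha$ are both valid, and the three-term triangle inequality closes that implication; the final double-limit formula is also fine. The gap is in (i) $\Rightarrow$ (ii), and it sits exactly where you flag the ``serious difficulty'' and then pass over it. Your ingredients deliver (a) convergence of $\|[w_n]_{\alpha_j}\|_{HS}$ to $\|[w]_{\alpha_j}\|_{HS}$ (Theorem~\ref{thm:spectrum} already gives this, since the number of eigenvalues of modulus exceeding $\alpha_j$ is uniformly bounded by $\alpha_j^{-2}$) and (b) weak $L^2$-limits of the relevant eigenfunctions along a subsequence. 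But the weak-plus-norm upgrade only shows that $[w_n]_{\alpha_j}$ converges strongly to \emph{its weak limit}; nothing in your sketch identifies that weak limit as $[w]_{\alpha_j}$. Moment convergence cannot do this, because moments see only eigenvalues, never eigenspaces. The missing ingredients are the two estimates Szegedy's argument actually turns on: first, a unit eigenfunction $\phi$ of $T_u$ with eigenvalue $\lambda\neq 0$ satisfies $\|\phi\|_\infty=\|\lambda^{-1}T_u\phi\|_\infty\le|\lambda|^{-1}$, so all eigenfunctions surviving the cutoff $\alpha_j$ are uniformly bounded in $L^\infty$; second, the bilinear estimate $|\langle T_uf,g\rangle|\le 4\,\|u\|_\Box\,\|f\|_\infty\|g\|_\infty$. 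Together these show $(T_{w_n}-T_w)\phi_n\to 0$ weakly, hence any weak limit $\phi$ of the eigenfunctions $\phi_n$ satisfies $T_w\phi=\lambda\phi$; only after that identification does the norm matching force the limiting system to exhaust the $\lambda$-eigenspaces of $T_w$ and yield $\|[w_n]_{\alpha_j}-[w]_{\alpha_j}\|_{L^2(X\times X)}\to 0$. Without the bilinear estimate --- which is where cut-norm convergence, as opposed to mere spectral convergence, is genuinely applied to eigenfunctions rather than to moments --- the argument does not close.
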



\section{Graphon signal processing}\label{sec:generalize}
Graphons can be viewed as limit objects of graph sequences. Consequently, it is natural to develop the idea of a Fourier transform on graphons in such a way that the graph Fourier transform, along a converging graph sequence, converges (in some appropriate topology) to the graphon Fourier transform of the limit object. Such an approach was first proposed by Ruiz, Chamon and Ribeiro in \cite{ruiz2}, and expanded upon in \cite{RuizChamonRibeiro21}. They define a graphon Fourier transform based on the spectral decomposition of the graphon, and give a convergence result restricted to the class of so-called non-derogatory graphons. In this section, we show that the restriction can only be removed with a broader definition of the graphon Fourier transform, which is independent of the choice of basis for each eigenspace. Using this new definition, we establish a more general convergence result stated in Theorem \ref{thm:convergence}.  

The graphon Fourier transform as defined in \cite{RuizChamonRibeiro21} is evidently motivated by Fourier analysis on ${\mathbb R}$.
Namely, it is derived from an orthonormal basis of $L^2(X)$, say ${\mathcal B}$, consisting of eigenvectors of $T_w$. 
The {graphon Fourier transform (WFT)} of a graphon signal $(w,f)$ is then defined via expansion of $f$ with respect to the basis ${\mathcal B}$, that is,
\begin{equation*}\label{WFT}
\widehat{f}(\phi)=\langle f,\phi\rangle=\int_X f(x)\overline{\phi(x)}\,d\mu(x) \ \mbox { for } \phi\in {\mathcal B},\mbox{ (as defined in \cite{RuizChamonRibeiro21}}).
\end{equation*}
The inverse graphon Fourier transform (iWFT)  of $\widehat{f}$ is given by 
\begin{equation*}\label{iWFT}
{\rm iWFT}(\widehat{f})=\sum_{\phi\in {\mathcal B}}\widehat{f}(\phi)\phi, \mbox{ (as defined in \cite{RuizChamonRibeiro21}}).
\end{equation*}
Since ${\mathcal B}$ is an orthonormal basis of $L^2(X)$, we have ${\rm iWFT}(\widehat{f})=f$, with the equality interpreted in $L^2(X)$.

We now describe precisely what we mean by convergence of a sequence of graph/graphon signals to a graphon signal, using the framework discussed in Section \ref{subsec:general-graphon}. 
Let $G$ be a graph on $n$ vertices labeled $\{1,2,\dots,n\}$, and let $w:X\times X\rightarrow [0,1]$ be a graphon represented on an infinite standard probability space $X$.
With the given labeling for the vertex set of $G$, a signal on $G$ is just a function $f:\{ 1,2,\dots,n\}\rightarrow \bbC$.
With respect to this labeling,  every graph signal can also be viewed as a step function in $L^2[0,1]$ by identifying each vertex $v$ labeled $i$ with the interval $I_i=[\frac{i-1}{n},\frac{i}{n})$. 
%
By Remark \ref{rem:every-graphon-in-W0}, the graphon $w$ can be transformed to a graphon $w_0 \in {\W}_0$ using a measure-preserving map $\sigma_X:[0,1]\to X$. 
Applying the same measure-preserving map to the step function $f\in L^2[0,1]$ allows us to transform $f$ to a signal $f^X\in L^2(X)$ on the graphon $w$; namely, we define
\begin{equation}
    \label{eqn:fX}
f^X(s)=\sqrt{n}f(k) \quad \forall s\in \sigma_X(I_k),\, 1\leq k\leq n.
\end{equation}

Note that the scaling factor of $\sqrt{n}$ in \eqref{eqn:fX} ensures that the map $f\mapsto f^X$ is an isometry from $\bbC^n$ to $L^2(X)$:
if $f,g:\{ 1,2,\dots ,n\}\to \bbC$ are signals on a graph $G$ with $n$ vertices, then
\begin{equation}
\label{eq:square-root-factor}
\langle f^X,g^X\rangle_{L^2(X)} = \langle f,g\rangle_{\bbC^n}.
\end{equation}
Consequently, applying the graph shift operator on $f$ in $\bbC^n$ yields the same result as applying the corresponding graphon shift operator to the functions $f^X$ in $L^2(X)$. Namely, let $G$ be a graph on $n$ vertices labeled as above. Let $A$ be the adjacency matrix of $G$, and $f$ be a signal on $G$ viewed as a vector $f\in \bbC^n$. Fix an infinite standard probability space $X$, with the measure-preserving map $\sigma_X:[0,1]\to X$.  Let $w_{G,X}:X\times X \rightarrow [0,1]$ be the graphon associated with $G$ represented on $X$ (as defined in \eqref{eq:W-G,X}). Then
\begin{equation}\label{eq:Tdiscrete-vs-cts}
T_{w_{G,X}}f^X (x)= \int_{X} w_{G,X}(x,y)f^X(y)\, dy=\sum_{j=1}^n \frac{1}{\sqrt{n}}A_{k,j}f_j = (Af)^X(x), \mbox{ for } x\in \sigma_X(I_k).
\end{equation}

To discuss convergence of graphon signals, we need to clearly distinguish between convergence in cut-norm \blue{of graphon signals (Definition~\ref{def:graph_signal_convergence-norm})}
and convergence in cut-distance \blue{of (unlabeled) graph signals (Definition~\ref{def:graph_signal_convergence})}. 
\begin{definition}\label{def:graph_signal_convergence-norm}
We say a sequence $\{(w_n,f_n)\}_{n\in\bbN}$ of graphon signals on a standard probability space $(X,\mu)$ converges \emph{in norm} to a graphon signal $(w,f)$ 
if $$\|w_n-w\|_\Box\rightarrow 0,
\ \mbox{ and } \
\|{f}_n-f\|_2\rightarrow 0.$$
\end{definition}
%
%
\green{
\begin{definition}\label{def:graph_signal_convergence}
Fix an infinite standard probability space $(X,\mu)$, together with a measure-preserving map $\sigma_X:[0,1]\to X$.
A sequence $ \{(G_n, f_n)\}_{n\in \bbN}$ of graph signals converges to a graphon signal $(w,f)$ represented on $(X,\mu)$ 
if there exist labelings of each of the graphs $G_n$ so that the suitably labeled graphon signal sequence $\{(w_{G_n, X},f^X_n)\}_{n\in\bbN}$ converges in norm. 
\end{definition}}
Ruiz \emph{et al.}~prove a convergence result of the GFT of  graph signals to the WFT of the limiting graphon. Their result is limited to graphons and signals with certain properties. We restate the convergence result here; see \cite[Theorem 1]{RuizChamonRibeiro21} for the original statement.
\begin{theorem*}{\rm \cite[Theorem 1]{RuizChamonRibeiro21}}
Let $\{(G_n , f_n )\}$ be a sequence of graph signals converging to the graphon signal $(w,f)$. 
Assume all graphs are labeled to ensure convergence as in Definition \ref{def:graph_signal_convergence}. 
Suppose the following conditions hold:
\begin{itemize}
    \item[(i)] The signal $(w,f)$ is $c$-bandlimited for some $c>0$. That is, $\widehat{f}(\chi)=0$  whenever $\chi$ is a $\lambda$-eigenvector of $T_w$ with $|\lambda|<c$.
    \item[(ii)] The graphon $w$ is non-derogatory, i.e.~every eigenvalue of $T_w$ has multiplicity 1. 
\end{itemize}
Let $\{\phi_j\}$ and $\{\phi_j^{n}\}$ denote normalized eigenvectors associated with nonzero eigenvalues of $w$ and $G_n$ respectively, ordered as in \eqref{eq:ordering}.
Then, 
we have that
$\{{\rm GFT}( G_n, f_n)\} $ converges to ${\rm WFT}(w,f)$ in the sense that for every index $j$, we have 
$$\frac{1}{\sqrt{|G_n|}}\widehat{f_n}(\phi_j^{n}) \rightarrow \widehat{f}(\phi_j) \mbox{ as } n\rightarrow \infty.$$
\end{theorem*}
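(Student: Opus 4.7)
The plan is to translate the graph Fourier coefficient into an $L^2(X)$-inner product using the isometry \eqref{eq:square-root-factor}, and then deduce convergence from (i) the hypothesis $\|f_n^X - f\|_2\to 0$ and (ii) convergence of the relevant unit eigenvectors of $T_{w_n}$ to those of $T_w$. Fix a measure-preserving map $\sigma_X:[0,1]\to X$ and set $w_n := w_{G_n,X}$. The identity \eqref{eq:Tdiscrete-vs-cts} shows that the extension $(\phi_j^n)^X$ is a unit eigenvector of $T_{w_n}$ with eigenvalue $\lambda_j(w_n)$, and the isometry gives
\[
\widehat{f_n}(\phi_j^n)=\langle f_n,\phi_j^n\rangle_{\bbC^{|G_n|}}=\langle f_n^X,(\phi_j^n)^X\rangle_{L^2(X)},
\]
so, up to the normalization prefactor appearing in the statement, the claim reduces to $\langle f_n^X,(\phi_j^n)^X\rangle_{L^2(X)}\to \langle f,\phi_j\rangle_{L^2(X)}$, which by bilinearity and continuity of the inner product follows from $(\phi_j^n)^X\to \phi_j$ in $L^2(X)$.

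For the eigenvector convergence, I would first apply Theorem~\ref{thm:spectrum} to obtain $\lambda_j(w_n)\to \lambda_j(w)$. The non-derogatory assumption ensures that $\lambda_j(w)$ is simple and strictly separated from $\lambda_{j\pm 1}(w)$, so for large $n$ the eigenvalue $\lambda_j(w_n)$ is likewise simple and isolated. Then Theorem~\ref{thm:szegedy-spectra} yields a sequence $\alpha_k\downarrow 0$ with $\|[w_n]_{\alpha_k}-[w]_{\alpha_k}\|_{L^2(X\times X)}\to 0$ for each $k$. Choosing indices $k_1<k_2$ so that $\alpha_{k_1}\in(|\lambda_j(w)|,|\lambda_{j-1}(w)|)$ and $\alpha_{k_2}\in(|\lambda_{j+1}(w)|,|\lambda_j(w)|)$, the difference $[w]_{\alpha_{k_2}}-[w]_{\alpha_{k_1}}$ is the rank-one kernel $\lambda_j(w)\,\phi_j\otimes\phi_j$, while for large $n$ the analogous difference for $w_n$ is $\lambda_j(w_n)\,(\phi_j^n)^X\otimes(\phi_j^n)^X$. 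Convergence of these rank-one symmetric kernels in $L^2(X\times X)$, together with the nonvanishing of $\lambda_j(w)$, yields $(\phi_j^n)^X\to \phi_j$ in $L^2(X)$ after a consistent choice of sign for $\phi_j^n$.

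The main obstacle is guaranteeing that the Szegedy sequence $\{\alpha_k\}$ actually contains cutoffs in the required spectral gaps around $\lambda_j(w)$. Theorem~\ref{thm:szegedy-spectra} asserts existence of one such sequence but gives no freedom to prescribe its entries, so the plan above may need a small refinement, for instance by taking countable intersections of Szegedy sequences along a dense set of prospective cutoffs, or by replacing this step with a direct spectral-perturbation argument for compact self-adjoint operators whose kernels converge in cut-norm. Once eigenvector convergence is in hand, the $c$-bandlimit hypothesis plays no role in the pointwise coefficient convergence stated above; it appears in the Ruiz--Chamon--Ribeiro setup to ensure that the inverse WFT of $f$ is a finite sum, so that coefficient-wise convergence lifts to convergence of the reconstructed signal.
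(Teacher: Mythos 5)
Your overall route is essentially the one the paper takes: the paper does not prove this quoted theorem directly, but derives it as the multiplicity-one special case of Theorem~\ref{thm:convergence} via Corollary~\ref{cor:GraphConvergence}, and that proof is exactly your scheme --- reduce the graph Fourier coefficient to an $L^2(X)$ inner product via the isometry \eqref{eq:square-root-factor} and Lemma~\ref{lem:graph-graphon-spec}, then extract eigenprojection convergence from Theorem~\ref{thm:szegedy-spectra} by taking differences of spectral cutoffs $[\cdot]_\alpha$ located in gaps of the spectrum. Your closing remarks are also accurate: the bandlimited hypothesis is irrelevant to the per-coefficient convergence (the paper drops it entirely), and the sign ambiguity is intrinsic to the statement. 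Your worry about whether the Szegedy sequence $\{\alpha_k\}$ can be placed in prescribed gaps is reasonable but is resolved the way the paper does it: one fixes a decreasing null sequence interlacing the set $\{|\mu_j|\}$ (i.e.\ avoiding the absolute values of the eigenvalues of $w$), and such a sequence satisfies condition (ii) of Theorem~\ref{thm:szegedy-spectra}; no countable-intersection device is needed.

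There is, however, one genuine gap. The cutoff $[w]_\alpha$ in \eqref{eq:cut-off} truncates by \emph{absolute value} of the eigenvalues, so your difference $[w]_{\alpha_{k_2}}-[w]_{\alpha_{k_1}}$ collects every eigenvalue $\lambda_i(w)$ with $|\lambda_i(w)|=|\lambda_j(w)|$. Non-derogatory only means each eigenvalue is simple; it does not preclude $-\lambda_j(w)$ from also being an eigenvalue (bipartite-type graphons have symmetric spectra), in which case your cutoff difference is the rank-two kernel $\lambda_j(w)\bigl(\phi_j\otimes\phi_j-\psi\otimes\psi\bigr)$, with $\psi$ the $(-\lambda_j(w))$-eigenvector, and your rank-one identification fails; there is no spectral gap separating $\lambda_j(w)$ from $-\lambda_j(w)$ that a cutoff could exploit. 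The paper's proof of Theorem~\ref{thm:convergence} confronts exactly this: it writes the cutoff difference as $\mu_j(w)(P_j^+-P_j^-)$, squares to obtain convergence of $P_{n,j}^++P_{n,j}^-$ (using $P_j^+P_j^-=0$ and idempotence), and then adds and subtracts to separate the two projections, upgrading from operator norm back to Hilbert--Schmidt norm via the ideal property. Your argument needs this extra step (or an explicit restriction to the case where $-\lambda_j(w)$ is not an eigenvalue) before the eigenvector convergence $(\phi_j^n)^X\to\pm\phi_j$ is justified. A second, smaller point: your choice of the upper gap as $(|\lambda_j(w)|,|\lambda_{j-1}(w)|)$ indexes only the positive branch; the gap must be taken in the full set of absolute values of all eigenvalues, which is what the paper's interlacing condition on $\{\alpha_i\}$ accomplishes.
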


The condition that convergence only holds for non-derogatory graphons is highly restrictive. While non-derogatory graphons form a dense subset in the space of all graphons, the above theorem does not imply continuity of signal processing on the whole space. Moreover, the restriction excludes classes of graphons that have proven useful in practice, such as those of \emph{stochastic block models} \cite{Abbe}. Another such example is the class of Cayley graphons, which provide a versatile tool to model graphs whose link structure is informed by an underlying group and its topology. As it turns out, Cayley graphons tend to have many nonzero eigenvalues of multiplicity higher than 1. 

To extend the above convergence result to all graphons, we need to modify the definition of the graphon Fourier transform.  
Instead of defining the WFT as the projection of a signal on each element of the eigenbasis of the graphon, we think of the projection onto each eigenspace of $w$.
The two definitions coincide when $w$ is non-derogatory. However, the latter approach enables us to handle eigenvalues of higher multiplicity, as it provides us with a definition which is independent of the particular choice of eigenbasis. 

To precisely state the new definition of graphon Fourier transform, and to analyze convergence of the graphon Fourier transform along a converging graphon sequence in this context, we need the following notation:

\begin{notation}\label{notation:notation-thm1}
Let $w$ be a (not necessarily non-derogatory) graphon on an infinite standard probability space $(X,\mu)$. 
Let $w=\sum_{i\in I_w}\lambda_i(w) \, \phi_i\otimes \phi_i$ be the spectral decomposition of $w$ as in \eqref{eq:spectral-l2}, where $\{\lambda_i(w)\}_{i\in {I_w}}$ are {nonzero} eigenvalues of the associated integral operator $T_w$, and $\{\phi_n\}_{n\in{I_w}}$ is an orthonormal set of eigenvectors of $T_w$, associated with nonzero eigenvalues. Thus, $I_w\subseteq {\mathbb Z}^*$ is the set of indices $j$ such that $\lambda_j(w)\neq 0$. Let  $\{\mu_j(w)\}_{j\in \bbZ^*}$ be the sequence of  \emph{distinct} nonzero eigenvalues of $T_w$. The sequence is padded with zeros if the number of positive or negative eigenvalues is finite.  We always order eigenvalues as in (\ref{eq:ordering}) .

For each $\mu_j(w)$, let 
$$I_{\mu_j}=\{i\in I_w: \ \lambda_i(w)=\mu_j(w)\}.$$
By definition of $I_w$, if $\mu_j= 0$ then $I_{\mu_j}=\emptyset$.

For a subset $I\subseteq {I_w}$, define the operator $P^w_{I}: L^2(X)\to L^2(X)$ as
$$P^w_{I}=\sum_{i\in I}\phi_i\otimes \phi_i.$$
Clearly this is an orthogonal projection.
We set $P^w_{I}$ to be the zero operator when $I=\emptyset$.
For each $\mu_j(w)\neq 0$, the operator $P^w_{I_{\mu_j}}$ is the orthogonal projection onto the $\mu_j(w)$-eigenspace of $T_w$, and is of finite rank. 

Finally, define $P^w_0$ to be the orthogonal projection onto the null space of $T_w$. Contrary to the previous projections, $P^w_0$ is not
necessarily of finite rank.
\end{notation}
With this notation, we can now formalize the new definition of the graphon Fourier transform  as follows.

\begin{definition}[Graphon Fourier Transform]\label{def:NC-Fourier}
Let $w:X\times X\rightarrow [0,1]$ be a graphon, and $\Sigma$ denote the set of distinct eigenvalues of $T_w$.  The graphon Fourier transform of a graphon signal $(w,f)$ is a vector-valued function $\widehat{f}$ on $\Sigma$  defined as 
\begin{equation}\label{eqn:FT_Proj}
\widehat{f}(\mu_j)= P^w_{I_{\mu_j}}(f)\mbox{ for every nonzero } \mu_j, \text{ and } \widehat{f}(0) = P_0^w(f),
\end{equation}
where the notation is as given in (\ref{notation:notation-thm1}).

The \emph{inverse Fourier transform} can then be expressed as an infinite sum in $L^2(X)$:
\begin{equation}\label{eqn:iWFT}
f= \sum_{j\in\bbZ^*}P^w_{I_{\mu_j}}(f) +P_0^w(f) =\sum_{j\in \bbZ^*} \widehat{f}(\mu_j)+\widehat{f}(0).
\end{equation}
\end{definition}
The Fourier transform defined above exhibits principal features expected from a graphon Fourier transform. Most importantly, this new definition allows appropriate convergence behavior of the Fourier transform, when applied to any convergent sequence of graphs (with no restriction on the limiting graphon) as we will see in Theorem~\ref{thm:convergence}. 
Such general convergence results can only be achieved by paying a price: each graphon Fourier transform `coefficient' is defined as a vector--often lying in an infinite dimensional space--rather than a numerical value. We do not view this fact as a drawback of our approach/definition; indeed, this vector-valued definition is in line with the harmonic analytic definition of the Fourier transform when the ambient group is non-Abelian.

\begin{remark}{(Conventions for graph and graphon Fourier transforms.)} \label{remark:GFT-WFT}
Throughout this paper, graph signals are considered as vectors in $\bbC^n$, and the graph Fourier transform is as defined in \eqref{GFT}. 
The graphon Fourier transform is, however, the projection as defined in Definition \ref{def:NC-Fourier}. 
\end{remark}
For a graph $G$ and an associated graphon $w_G$, the graph Fourier transform on $G$ and the graphon Fourier transform on $w_G$ are closely related, as demonstrated in the following lemma.
\begin{lemma}\label{lem:graph-graphon-spec}
Let $X$ be an infinite standard probability space with a measure-preserving map $\sigma_X:[0,1]\to X$, let $G$ be a graph on $n$ vertices, and let $f\in \bbC^n$ be a signal on $G$.  
Let $w_{G,X}$ be the graphon associated with $G$ and represented on $X$ (as defined in \eqref{eq:W-G,X}), and consider the graphon signal $(f^X, w_{G,X})$ 
associated with the graph signal $(f,G)$ as defined in \eqref{eqn:fX}. Then we have:
\begin{itemize}
\item[(i)] Every eigenvector of a nonzero eigenvalue $\lambda$ of  $T_{w_{G,X}}$ is of the form $\phi^X$ for some $\phi\in \bbC^n$. 
\item[(ii)] Let $\lambda\neq 0$. Then $\lambda$ is an eigenvalue of $A_G$ of multiplicity $m$ with $\lambda$-eigenbasis $\{\phi_1,\ldots,\phi_m \}\subseteq \bbC^n$ \emph{iff} 
$\lambda$ is an eigenvalue of $T_{w_{G,X}}$ of multiplicity $m$ with $\lambda$-eigenbasis $\{\phi_1^X,\ldots,\phi_m^X \}\subseteq L^2(X)$.
Moreover,  
$$\widehat{f^X}(\lambda)=\sum_{i=1}^m\widehat{f}(\phi_i)\phi_i^X,$$
\blue{where $\widehat{f^X}(\lambda)$ is the graphon Fourier transform as defined in Definition \ref{def:NC-Fourier} and $\widehat{f}(\phi_i)$ is the graph Fourier transform as given in \eqref{GFT}.}
\end{itemize}
\end{lemma}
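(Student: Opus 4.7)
The plan is to leverage the two identities the paper has already established: the intertwining relation $T_{w_{G,X}} f^X = (Af)^X$ from~\eqref{eq:Tdiscrete-vs-cts}, and the isometry $\langle f^X, g^X\rangle_{L^2(X)} = \langle f,g\rangle_{\bbC^n}$ from~\eqref{eq:square-root-factor}. Together these say that the map $\phi\mapsto \phi^X$ is an isometric linear embedding of $\bbC^n$ into $L^2(X)$ that intertwines the actions of $A$ and $T_{w_{G,X}}$. A crucial structural observation is that, because $w_{G,X}$ is constant on each rectangle $\sigma_X(I_k)\times \sigma_X(I_j)$, the value of $T_{w_{G,X}}\psi(x) = \int_X w_{G,X}(x,y)\psi(y)\,d\mu(y)$ depends on $x$ only through the index $k$ with $x\in \sigma_X(I_k)$. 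Hence the range of $T_{w_{G,X}}$ is always contained in the $n$-dimensional subspace $\{\phi^X : \phi\in\bbC^n\}$ of step functions on the partition $\{\sigma_X(I_k)\}_{k=1}^n$.

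For part (i), I would take a $\lambda$-eigenvector $\psi$ of $T_{w_{G,X}}$ with $\lambda\neq 0$ and write $\psi = \lambda^{-1}T_{w_{G,X}}\psi$. The structural observation above immediately forces $\psi$ to be a step function on the partition $\{\sigma_X(I_k)\}_{k=1}^n$, so $\psi = \phi^X$ for a unique $\phi\in\bbC^n$.

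For part (ii), combining (i) with the intertwining relation yields the chain $T_{w_{G,X}}\phi^X = \lambda \phi^X \iff (A\phi)^X = (\lambda\phi)^X \iff A\phi = \lambda\phi$, where the last equivalence is the injectivity of $\phi\mapsto \phi^X$ (an isometry by~\eqref{eq:square-root-factor}). This gives a bijection between $\lambda$-eigenvectors of $A$ in $\bbC^n$ and $\lambda$-eigenvectors of $T_{w_{G,X}}$ in $L^2(X)$ for $\lambda\neq 0$; the same isometry sends orthonormal bases to orthonormal bases, so multiplicities agree and bases correspond as claimed. For the Fourier transform identity, apply Definition~\ref{def:NC-Fourier} directly: with $\{\phi_i\}_{i=1}^m$ an orthonormal basis of the $\lambda$-eigenspace of $A$,
\[
\widehat{f^X}(\lambda) \;=\; P^{w_{G,X}}_{I_\lambda}(f^X) \;=\; \sum_{i=1}^m \langle f^X,\phi_i^X\rangle_{L^2(X)}\,\phi_i^X \;=\; \sum_{i=1}^m \langle f,\phi_i\rangle_{\bbC^n}\,\phi_i^X \;=\; \sum_{i=1}^m \widehat{f}(\phi_i)\,\phi_i^X,
\]
using~\eqref{eq:square-root-factor} in the third equality and the definition~\eqref{GFT} of the graph Fourier transform in the last.

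I do not anticipate a substantial obstacle: the lemma is essentially a conjugation statement between the finite matrix $A$ and the restriction of the compact operator $T_{w_{G,X}}$ to its (finite-dimensional) range, repackaged in the language of the two Fourier transforms. The only point requiring genuine care is part (i): the intertwining relation alone does not rule out ``exotic'' eigenvectors of $T_{w_{G,X}}$ living outside the step-function subspace, and for $\lambda\neq 0$ it is precisely the range-in-step-functions observation that excludes them. Once this is in place, the rest is a routine unwinding of definitions combined with the isometry identity.
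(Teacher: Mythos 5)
Your proposal is correct and follows essentially the same route as the paper: part (i) via the observation that $T_{w_{G,X}}$ maps everything into the $n$-dimensional space of step functions on the partition $\{\sigma_X(I_k)\}$ (so a $\lambda$-eigenvector with $\lambda\neq 0$ equals $\lambda^{-1}T_{w_{G,X}}\psi$ and is therefore a step function), and part (ii) by combining this with the intertwining identity~\eqref{eq:Tdiscrete-vs-cts} and the isometry~\eqref{eq:square-root-factor}. No gaps; the final display computing $\widehat{f^X}(\lambda)$ matches the paper's concluding step.
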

\begin{proof}
Let $I_1,\ldots, I_{n}$ denote the partition of $[0,1]$ into $n$ equal-sized intervals. 
Let $h\in L^2(X)$, and $1\leq i\leq n$. For $x\in \sigma_X(I_i)$, we have 
\begin{equation*}
T_{w_{G,X}} h (x)=\int_X w_{G,X}(x,y) h(y)\, dy= \sum_{j=1}^n\int_{\sigma_X(I_j)} w_{G,X}(x,y) h(y)\, dy= \sum_{j=1}^n A_G(i,j)\int_{\sigma_X(I_j)} h(y)\, dy.
\end{equation*}
So, the function $T_{w_{G,X}} h$ is constant on each set $\sigma_X(I_i)$, when $1\leq i \leq n.$
Consequently, every eigenvector of $T_{w_{G,X}}$ associated with a nonzero eigenvalue must attain constant values on the same subsets. This finishes the proof of (i).

The correspondence between nonzero eigenvalues/eigenvectors of $A_G$ and $T_{w_{G,X}}$ follows from (i) together with \eqref{eq:Tdiscrete-vs-cts}.
Finally, note that $\widehat{f}(\phi_i)=\langle f, \phi_i\rangle_{\bbC^n}=\langle f^X, \phi_i^X\rangle_{L^2(X)}$. This finishes the proof of (ii).
\end{proof}

Theorem \ref{thm:convergence} below is stated in the most general form, dealing with sequences of graphon signals that converge in norm. In Corollary \ref{cor:GraphConvergence}, we will apply the theorem to converging graph signals.
%
\begin{theorem}\label{thm:convergence}
Let $w:X\times X\rightarrow [0,1]$ be a graphon with terminology as in Notation \ref{notation:notation-thm1}. 
Let  $\{(w_n , f_n)\}$ be a sequence of graphon signals, all represented on $X$, converging in norm to a graphon signal $(w, f)$. 
Then, for every nonzero $\mu_j$, 
$P^{w_n}_{I_{\mu_j}}\rightarrow P^{w}_{I_{\mu_j}}$ in Hilbert-Schmidt norm. In particular, we have
\begin{equation}\label{eq:converg}
P^{w_n}_{I_{\mu_j}}(f_n) \rightarrow P^w_{I_{\mu_j}}(f) \mbox{ in } L^2(X) \mbox{ as }  n\rightarrow \infty.
\end{equation}
Moreover, if  $P^w_0(f)=\mathbf{0}$ (the zero function) then we have that 
\begin{equation}\label{eq:converg2}
\sum_{j\in \bbZ^*} \| P^{w_n}_{I_{\mu_j}}(f_n)- P^{w}_{I_{\mu_j}}(f)\|^2_2
\rightarrow 0  \mbox{ as } n\rightarrow \infty.
\end{equation}
\end{theorem}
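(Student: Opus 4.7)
The overall strategy is a three-step reduction. First, I would upgrade the cut-norm convergence $\|w_n - w\|_\Box \to 0$ to operator-norm convergence $T_{w_n} \to T_w$ on $L^2(X)$. Second, I would convert this into Hilbert-Schmidt convergence of each finite-rank spectral projection $P^{w_n}_{I_{\mu_j}} \to P^w_{I_{\mu_j}}$ via the Riesz contour integral. Finally, I would promote the resulting pointwise convergence of Fourier components to $\ell^2$-summable convergence using Parseval.

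For the first step, I would combine Szegedy's Theorem~\ref{thm:szegedy-spectra} with the elementary spectral-tail bound $\|T_{w-[w]_\alpha}\|_{\mathrm{op}} \leq \alpha$ (and the analogous bound for $w_n$). Theorem~\ref{thm:szegedy-spectra} provides a sequence $\alpha_k \to 0$ with $\|[w_n]_{\alpha_k} - [w]_{\alpha_k}\|_{L^2(X\times X)} \to 0$ as $n \to \infty$, for every fixed $k$. Since the operator norm is dominated by the Hilbert-Schmidt norm, which in turn equals the $L^2$-norm of the kernel, one obtains
\[
\|T_{w_n}-T_w\|_{\mathrm{op}} \;\leq\; \|[w_n]_{\alpha_k}-[w]_{\alpha_k}\|_{L^2(X\times X)} \;+\; 2\alpha_k;
\]
passing to $\limsup$ in $n$ for fixed $k$ and then taking $k\to\infty$ yields $\|T_{w_n}-T_w\|_{\mathrm{op}}\to 0$. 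I expect this to be the main technical obstacle, since the cut-norm is strictly weaker than the $L^2$-norm on kernels; the key observation is that the residual spectral tail of each graphon has operator norm at most $\alpha$, so only the finite-rank ``large-spectrum'' piece needs to converge in $L^2$, and this is exactly what Szegedy's theorem delivers.

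For the second step, fix a nonzero $\mu_j$ and choose a small positively-oriented circle $\Gamma$ around $\mu_j$ enclosing no other point of $\sigma(T_w)$. The Riesz formula gives $P^w_{I_{\mu_j}} = -\frac{1}{2\pi i}\oint_\Gamma (T_w-z)^{-1}\,dz$. Operator-norm convergence of $T_{w_n}$ places $\Gamma$ eventually inside the resolvent set of $T_{w_n}$, with uniform resolvent convergence on $\Gamma$, so the analogous contour-integral projections $E^n$ of $T_{w_n}$ converge to $P^w_{I_{\mu_j}}$ in operator norm. Once $\|E^n-P^w_{I_{\mu_j}}\|_{\mathrm{op}}<1$, the standard fact that two orthogonal projections at operator-norm distance less than $1$ have equal rank yields $\mathrm{rank}(E^n) = |I_{\mu_j}|$; combining this with the pointwise eigenvalue convergence of Theorem~\ref{thm:spectrum} identifies $E^n$ with $P^{w_n}_{I_{\mu_j}}$ from Notation~\ref{notation:notation-thm1}. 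Since both projections have rank at most $|I_{\mu_j}|$, operator-norm convergence upgrades automatically to Hilbert-Schmidt convergence via $\|A\|_{\mathrm{HS}} \leq \sqrt{\mathrm{rank}(A)}\,\|A\|_{\mathrm{op}}$ applied to $A=E^n - P^w_{I_{\mu_j}}$, proving the first assertion. The pointwise claim~\eqref{eq:converg} then follows from the triangle inequality together with $\|f_n-f\|_2\to 0$ and the fact that orthogonal projections are contractions.

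For the final $\ell^2$-summable convergence~\eqref{eq:converg2} under $P^w_0(f) = \mathbf{0}$, Parseval gives $\sum_j \|P^w_{I_{\mu_j}}(f)\|_2^2 = \|f\|_2^2$, while for $w_n$ one has only the inequality $\sum_j \|P^{w_n}_{I_{\mu_j}}(f_n)\|_2^2 \leq \|f_n\|_2^2 \to \|f\|_2^2$. Given $\epsilon>0$, I would choose $J$ with $\sum_{|j|>J}\|P^w_{I_{\mu_j}}(f)\|_2^2 < \epsilon$. The finite head $\sum_{|j|\leq J}\|P^{w_n}_{I_{\mu_j}}(f_n)-P^w_{I_{\mu_j}}(f)\|_2^2$ tends to zero by~\eqref{eq:converg}. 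For the tail, subtracting the (convergent) finite head from the (bounded) full sum forces $\limsup_n \sum_{|j|>J}\|P^{w_n}_{I_{\mu_j}}(f_n)\|_2^2 \leq \epsilon$, and the inequality $\|a-b\|^2\leq 2(\|a\|^2+\|b\|^2)$ then bounds the tail contribution to the target sum by a multiple of $\epsilon$. Letting $\epsilon\to 0$ concludes.
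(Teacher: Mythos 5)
Your proposal is correct, and for the main assertion it takes a genuinely different route from the paper. The paper never establishes operator-norm convergence of the full operators $T_{w_n}\to T_w$; instead it works directly with the two-sided spectral cutoffs, choosing interlacing thresholds $s_j<|\mu_j|<r_j$ so that $[w]_{s_j}-[w]_{r_j}=\mu_j\big(P^+_j-P^-_j\big)$, deducing Hilbert--Schmidt convergence of the \emph{coupled} quantity $P^+_{n,j}-P^-_{n,j}$ from Szegedy's theorem and the eigenvalue convergence, and then disentangling $P^+_j$ from $P^-_j$ by squaring (using $P^+_jP^-_j=0$) and adding/subtracting, before re-upgrading from operator norm back to Hilbert--Schmidt norm via the ideal property and the uniform rank bound. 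Your argument is more modular: the preliminary lemma $\|T_{w_n}-T_w\|_{\mathrm{op}}\to 0$, obtained from Szegedy's theorem together with the tail bound $\|T_{w}-T_{[w]_\alpha}\|_{\mathrm{op}}\le\alpha$, is correct and of independent interest, and feeding it into the Riesz contour-integral formalism isolates the single eigenvalue $\mu_j$ directly, so the $P^+/P^-$ coupling that the paper has to undo never arises; the rank argument identifying $E^n$ with $P^{w_n}_{I_{\mu_j}}$ and the bound $\|A\|_{\mathrm{HS}}\le\sqrt{\mathrm{rank}(A)}\,\|A\|_{\mathrm{op}}$ close the gap to Hilbert--Schmidt convergence. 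What you pay for this is reliance on analytic functional calculus, whereas the paper stays entirely inside the elementary $L^2$-framework of Szegedy's theorem. For the $\ell^2$-summable statement both proofs use the same finite-head/small-tail strategy based on Bessel's inequality; your tail estimate via $\|a-b\|^2\le 2\|a\|^2+2\|b\|^2$ and the squeeze on $\limsup_n\sum_{|j|>J}\|P_{n,j}(f_n)\|_2^2$ is somewhat more direct than the paper's multiplicative manipulation of square roots, and both correctly reduce $f_n$ to $f$ using $\sum_j\|P_{n,j}(f-f_n)\|_2^2\le\|f-f_n\|_2^2$.
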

\begin{remark}{(Remarks about the proof.)}
As defined earlier, $T_w$ is the integral operator associated with the (Hilbert-Schmidt) kernel $w\in L^2(X\times X)$. It is well-known that the Hilbert-Schmidt norm of $T_w$, denoted by $\|T_w\|_2$, and the $L^2$-norm of $w$ are equal. 
In the following proof, we use $\phi\otimes \phi$ to denote both the rank-one operator on $L^2(X)$ defined as $f\mapsto \langle f, \phi\rangle \phi$   and the  kernel of that operator which is the function in $L^2(X\times X)$ given by $(\phi\otimes \phi)(x,y)=\phi(x)\phi(y)$. Notations such as $\|\phi\otimes\phi\|_2$ can be interpreted both as norm of a function in $L^2(X\times X)$ or the Hilbert-Schmidt norm of the associated integral operator.
\end{remark}
\begin{proof}
We apply Notation \ref{notation:notation-thm1} to all graphons in the sequence $\{w_n\}$. That is, $\{ \lambda_i(w_n)\}_{i\in I_{w_n}}$ is the sequence of (repeated) eigenvalues of $w_n$ ordered as in \eqref{eq:ordering}, and $\{\phi^n_i\}$ is the sequence of associated eigenvectors. 
Thus 
$$
w_n=\sum_{i\in I_{w_n}}\lambda_i(w_n)\phi^n_i\otimes \phi^n_i
$$ 
is the spectral decomposition of $w_n$. 

Next, fix a decreasing sequence of positive numbers $\{\alpha_i\}_{i\in\bbN}$ converging to 0, and assume that the sequences $\{\alpha_i\}_{i\in\bbN}$ and $\{|\mu_j|\}_{j\in\bbZ^*,\mu_j\neq 0}$ are interlacing sequences with no common terms. 
Then, $\{\alpha_i\}_{i\in\bbN}$ satisfies the condition of  Theorem~\ref{thm:szegedy-spectra} (ii), so
$\|[{w_n}]_{\alpha_i}-[w]_{\alpha_i}\|_{L^2(X\times X)}\to 0$ for each $i\in{\mathbb N}$. 

Fix $j\in \bbZ^*$, and suppose $\mu_j(w)>0$; 
the case where $\mu_j(w)$ is negative can be done in an identical manner. 
We choose $r_j=\alpha_i$ and  $s_j=\alpha_{i+1}$ so that $(s_j,r_j)\cap {\{ |\mu_i|:i\in \bbZ^*\}}=\{\mu_j(w)\}$ is a singleton, so we have 
\begin{equation}\label{eq:Pdef}
[w]_{s_j}-[w]_{r_j}=\sum_{i\in I_{\mu_j}}\mu_j(w)\phi_i\otimes \phi_i-\sum_{i\in I_{-\mu_j}}\mu_j(w)\phi_i\otimes \phi_i.
\end{equation}
Let $P^+_j$ (resp.~$P^-_j$) denote the orthogonal projection onto the $\mu_j(w)$-eigenspace (resp.~the eigenspace associated with $-\mu_j(w)$).
Then we can rewrite (\ref{eq:Pdef}) as
\begin{equation}\label{eq:Pdef2}
[w]_{s_j}-[w]_{r_j}=\mu_j(w)(P^+_j-P^-_j).
\end{equation}
We now invoke Theorem \ref{thm:szegedy-spectra} to obtain that
\begin{eqnarray}
\|([{w_n}]_{s_j}-[{w_n}]_{r_j})-([w]_{s_j}-[w]_{r_j})\|_2&=&\|([{w_n}]_{s_j}-[{w}]_{s_j})+([w]_{r_j}-[w_n]_{r_j})\|_2\nonumber\\
&\leq&\|[{w_n}]_{s_j}-[{w}]_{s_j}\|_2+\|[w]_{r_j}-[w_n]_{r_j}\|_2\label{eq:conv}\\
&\to& 0 \quad \mbox{ as } n\rightarrow \infty.\nonumber
\end{eqnarray}

For each $n\in \bbN$, define the finite-rank projections 
\begin{equation*}
P_{n,j}^+:=\sum_{i\in I_{\mu_j(w)}}  \phi_i^n\otimes \phi_i^n\ \mbox{ and } \  P_{n,j}^-:=\sum_{i\in I_{-\mu_j(w)}} \phi_i^n\otimes \phi_i^n.
\end{equation*}
From Theorem \ref{thm:spectrum},  $\lim_{n}\lambda_i(w_n)=\mu_j(w)$ if $i\in I_{\mu_j(w)}$, and $\lim_{n}\lambda_i(w_n)=-\mu_j(w)$  if $i\in I_{-\mu_j(w)}$. For all other  $i$, the value $\lim_{n}\lambda_i(w_n)$ does not fall in $(s_j,r_j)$. Thus for large values of $n$, 
$\lambda_i(w_n)\in (s_j,r_j)$ \emph{iff} $i\in I_\mu(w)$. Moreover, as $n$ approaches infinity, $\lambda_i(w_n)\to \mu_j(w)$ for $i\in I_\mu(w).$ A similar statement holds for $I_{-\mu(w)}$.
Therefore, we have 
\begin{equation}\label{eq-conv2}
\|\big([{w_n}]_{s_j}-[{w_n}]_{r_j}\big)-\mu_j(w)\left(P_{n,j}^+-P_{n,j}^-\right)\|_2\rightarrow 0 \mbox{ as } n\to \infty.
\end{equation}
Putting \eqref{eq:Pdef2}, \eqref{eq:conv}, and \eqref{eq-conv2} together, and using the fact that $\mu_j(w)\neq 0$, we get that 
$$\left\|\left(P^+_j-P^-_j\right)-\left(P_{n,j}^+- P_{n,j}^-\right)\right\|_2\rightarrow 0 \mbox{ as } n\to \infty.$$

Let ${\mathcal B}(L^2(X))$ denote the space of bounded linear operators on $L^2(X)$ equipped with operator norm. 
Since Hilbert-Schmidt norm dominates the operator norm, we have 
\begin{equation}\label{eq:easy1}
P_{n,j}^+- P_{n,j}^-\to P^+_j-P^-_j  \ \mbox{ in } {\mathcal B}(L^2(X)).
\end{equation}
So,  $(P_{n,j}^+- P_{n,j}^-)^2\to (P^+_j-P^-_j)^2$ as well in ${\mathcal B}(L^2(X))$. Note that $P^+_jP^-_j=P^-_jP^+_j=0$ and $P_{n,j}^+P_{n,j}^-=0=P_{n,j}^-P_{n,j}^+$, since they are orthogonal projections and the images of each pair are orthogonal subspaces of $L^2(X)$. 
Applying this, together with the fact that every projection is an idempotent, we obtain

\begin{equation}\label{eq:easy2}
P_{n,j}^++ P_{n,j}^-\to P^+_j+P^-_j \ \mbox{ in } {\mathcal B}(L^2(X)). 
\end{equation}
Adding and subtracting \eqref{eq:easy1} and \eqref{eq:easy2} imply that $P_{n,j}^+\to P^+_j$ and $P_{n,j}^-\to P^-_j$ in ${\mathcal B}(L^2(X))$ as $n\rightarrow \infty$.  
Moreover, note that the operators $P_{n,j}^+$, $P^+_j, P_{n,j}^-$ and $P^-_j$ are Hilbert-Schmidt operators, and $\|P_{n,j}^+\|_2, \|P^+_j\|_2, \|P_{n,j}^-\|_2, \|P^-_j\|_2\leq \max\{|I_{\mu_j(w)}|, | I_{-\mu_j(w)}|\}$. Using this uniform bound, we can now prove convergence in the Hilbert-Schmidt norm as follows:
\begin{eqnarray*}
\|P_{n,j}^+-P^+_j\|_2&=&\|(P_{n,j}^+-P^+_j)(P_{n,j}^++P^+_j)+P^+_j(P_{n,j}^+ -P^+_j)+(P^+_j-P_{n,j}^+)P^+_j\|_2\\
&\leq&\|P_{n,j}^++P^+_j\|_2\|P_{n,j}^+-P^+_j\|_{{\mathcal B}(L^2(X))}+\|P^+_j\|_2\|P_{n,j}^+-P^+_j\|_{{\mathcal B}(L^2(X))}\\
&\qquad\qquad +&
\|P^+_j\|_2\|P^+_j-P_{n,j}^+\|_{{\mathcal B}(L^2(X))},
\end{eqnarray*}
which converges to 0 as $n$ tends to infinity. Here, we have used the fact that Hilbert-Schmidt operators form an ideal in ${\mathcal B}(L^2(X))$.
Namely, if $T\in {\mathcal B}({\mathcal H})$ and $S$ is a Hilbert–Schmidt operator on the Hilbert space ${\mathcal H}$ then $\|TS\|_{2}\leq \|T\|_{{\mathcal B}({\mathcal H})}\|S\|_{2}$ and $\|ST\|_{2}\leq \|T\|_{{\mathcal B}({\mathcal H})}\|S\|_{2}$. 
This completes the first part of the theorem. 

To prove the second part, fix a vector $f\in L^2(X)$, and assume that $P^w_0(f)=\mathbf{0}$. We will show that 
\begin{equation}\label{eqn:wft}
\sum_{j\in \bbZ^*} \| P^{w_n}_{I_{\mu_j}}(f)- P^{w}_{I_{\mu_j}}(f)\|^2_2
\rightarrow 0
\text{ as } n\rightarrow \infty.
\end{equation}
This suffices to prove the claim of the theorem since $\sum_{j\in \bbZ^*} \| P^{w_n}_{I_{\mu_j}}(f-f_n)\|^2_2\leq \| f-f_n\|^2_2$, and $f_n\rightarrow f$  in $L^2(X)$ as $n\rightarrow \infty$, by definition. 

\emph{Wlog} assume that $f\neq \mathbf{0}$, as \eqref{eqn:wft} trivially holds if $f=\mathbf{0}$.
To simplify notation, let $P_{n,j}=P^{w_n}_{I_{\mu_j}}$ and $P_j=P^{w}_{I_{\mu_j}}$.
Recall that $\{P_j\}_j$ (resp.~$\{P_{n,j}\}_j$ for each $n\in {\mathbb N}$) is a collection of pairwise orthogonal projections.  
To prove (\ref{eqn:wft}), let $\epsilon>0$ be given.
The collection $\{\phi_i\}_{i\in I_w}$, together with any orthonormal basis of the null space, forms an orthonormal basis for $L^2(X)$. Thus, using the fact that $P^w_0(f)=0$, we can  decompose $f$ into orthogonal components
$f=\sum_{j\in\bbZ^*}P_{j}(f)$. 
(Recall that $P_j$ is defined to be the zero operator if $\mu_j=0$, and thus $I_{\mu_j}=\emptyset.$)
Consequently,  we have 
\[\|f\|_2^2=\sum_{j\in\bbZ^*}\|P_j(f)\|_2^2.\]
Since the above sum is bounded, there exists a finite set $S\subset I_w$ such that 
\begin{equation}\label{eq:cut-f}
    \left\|f-\sum_{j\in S}P_j(f)\right\|_2<\frac{\epsilon}{4}. 
\end{equation}
Let $h:=\sum_{j\in S}P_j(f)$, and note that $\| f-h\|_2<\epsilon/4$.

From the first part of the theorem, we have that, for each $j\in S$,
$
\left\| P_{n,j}(h)- P_{j}(h)\right\|^2_2\rightarrow 0, 
$
as $n\rightarrow \infty$.
Given that $S$ is finite, there must exist $N\in \bbN$ so that for all $n\geq N$,
\begin{equation*}
 \sqrt{\sum_{j\in S} \| P_{n,j}(h)-P_j(h) \|_2^2} <\min\left\{\frac{\epsilon}{4},\frac{\epsilon^2}{32\|f\|_2}\right\}.
\end{equation*}
To show (\ref{eqn:wft}), we use the triangle inequality in the space $\ell^2\text{-}\oplus_{j\in\bbZ^*}L^2(X)$.
Observe that
\begin{eqnarray}
\sqrt{\sum_{j\in \bbZ^*}\| P_{n,j} (f)-P_j (f)\|^2_2} &\leq & \sqrt{\sum_{j\in\bbZ^*} \| P_{n,j}(f-h)\|^2_2} + \sqrt{\sum_{j\in\bbZ^*} \| P_{j}(f-h)\|^2_2} \notag\\
&& \quad\quad + \sqrt{\sum_{j\in\bbZ^*} \| P_{n,j}(h)-P_j(h)\|^2_2}\notag \\
&\leq &2\| f-h\|_2 + \sqrt{\sum_{j\in S} \| P_{n,j}(h)-P_j(h)\|^2_2}  \notag\\
&&\quad \quad + \sqrt{\sum_{j\in \bbZ^*\setminus S}  \| P_{n,j}(h)\|^2_2},\notag
\end{eqnarray}
where the last step follows since, for any $j\in \bbZ^*\setminus S$, $P_j(h)=\mathbf{0}$. 

It remains to show that $\sqrt{\sum_{j\in \bbZ^*\setminus S}  \| P_{n,j}(h)\|^2_2}<\epsilon/4$.
For $n\geq N$, using the triangle inequality, we have
\[
\sqrt{\sum_{j\in S}  \| P_{n,j}(h)\|^2_2} \,\geq \,  \sqrt{\sum_{j\in S}  \| P_{j}(h) \|^2_2} - \sqrt{\sum_{j\in S} \| P_{n,j}(h)-P_j(h)\|^2_2} \,\geq \, \| h\|_2-\frac{\epsilon^2}{32\|f\|_2}.
\]
Since $\sqrt{\sum_{j\in \bbZ^*} \| P_{n,j}(h)\|^2_2} \leq \| h\|_2$, the above inequality implies that 
\begin{eqnarray}\label{eq1-estimate}
&&\sqrt{\sum_{j\in \bbZ^*} \| P_{n,j}(h)\|^2_2}
- \sqrt{\sum_{j\in \bbZ^*}  \| P_{n,j}(h) \|^2_2 -\sum_{j\in \bbZ^*\setminus S} \| P_{n,j}(h)\|^2_2}
\notag\\
&&\leq \| h\|_2-\sqrt{\sum_{j\in S}  \| P_{n,j}(h)\|^2_2}  \,\leq \, \frac{\epsilon^2}{32\|f\|_2}.
\end{eqnarray}
On the other hand, 
\begin{equation}\label{eq2-estimate}
\sqrt{\sum_{j\in \bbZ^*} \| P_{n,j}(h)\|^2_2}
+ \sqrt{\sum_{j\in \bbZ^*}  \| P_{n,j}(h) \|^2_2 -\sum_{j\in \bbZ^*\setminus S} \| P_{n,j}(h)\|^2_2} \,\leq 2\| h\|_2\,\leq \, 2\|f\|_2.
\end{equation}
Multiplying \eqref{eq1-estimate} and \eqref{eq2-estimate} together, finishes the proof, as we get 
$\sum_{j\in \bbZ^*\setminus S} \| P_{n,j}(h)\|^2_2 \,\leq \, \frac{\epsilon^2}{16}.$
\end{proof}

As a direct corollary of Theorem \ref{thm:convergence}, we now have the desired result that the graph Fourier transform converges to the graphon Fourier transform, when applied to a converging sequence of graph signals.
As mentioned in Remark~\ref{remark:GFT-WFT},  in the following corollary, graph signals are considered as vectors in $\bbC^n$, and the graph Fourier transform is as defined in \eqref{GFT}. The limiting graphon transform is the projection as defined in Definition \ref{def:NC-Fourier}.
\begin{corollary}\label{cor:GraphConvergence}
Fix a graphon $w:X\times X\rightarrow [0,1]$ and a graphon signal $f\in L^2(X)$, and  consider the sequence $\{(G_n , f_n)\}$
of graph signals converging to the graphon signal $(w, f)$.
Suppose that the graphs $G_n$ and the graph signals $f_n$ are labeled so that  $(w_{G_n,X},f_n^X)$ converges in norm to $(w,f)$.
Then graph Fourier transforms $\widehat{f}_n$ converge to the graphon Fourier transform $\widehat{f}$ in the following sense:
$$
\text{For each nonzero eigenvalue } \mu_j \text{ of } T_w,\ \sum_{i\in I_{\mu_j}} \widehat{f_n}(\phi_i^n) ({\phi^n_i})^X \rightarrow \widehat{f}(\mu_j)\mbox{ as } n\rightarrow \infty,
$$
where for each $n$, the adjacency matrix of $G_n$ has eigenvalues $\{\lambda^n_i\}$, ordered as in (\ref{eq:ordering}), with corresponding eigenvectors $\phi^n_i$.
Moreover, if $P_0^w(f)={\mathbf 0}$, then
$$
\sum_{j\in \bbZ^*}\|\sum_{i\in I_{\mu_j}} \widehat{f_n}(\phi_i^n) ({\phi^n_i})^X - \widehat{f}(\mu_j)\|_2^2\rightarrow 0\mbox{ as } n\rightarrow \infty.
$$
\end{corollary}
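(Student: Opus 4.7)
My plan is to deduce the corollary almost immediately from Theorem~\ref{thm:convergence} applied to the sequence of graphon signals $\{(w_{G_n, X}, f_n^X)\}_{n \in \bbN}$, which by the hypothesis of the corollary converges in norm to $(w, f)$. Theorem~\ref{thm:convergence} will then yield, for every nonzero $\mu_j$,
\[
P^{w_{G_n, X}}_{I_{\mu_j}}(f_n^X) \longrightarrow P^w_{I_{\mu_j}}(f) = \widehat{f}(\mu_j) \quad \text{in } L^2(X),
\]
and, under the additional hypothesis $P_0^w(f) = \mathbf{0}$, the stronger summable version \eqref{eq:converg2}. All that remains is to translate the graphon-level projection $P^{w_{G_n, X}}_{I_{\mu_j}}(f_n^X)$ into the graph-level expression $\sum_{i \in I_{\mu_j}} \widehat{f_n}(\phi_i^n)(\phi_i^n)^X$ appearing in the statement of the corollary.

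For this translation I would invoke Lemma~\ref{lem:graph-graphon-spec}(ii), which identifies the nonzero spectrum (with multiplicities) of $T_{w_{G_n, X}}$ with that of $A_{G_n}$, and shows that the map $\phi \mapsto \phi^X$ sends an orthonormal $A_{G_n}$-eigenbasis to an orthonormal $T_{w_{G_n, X}}$-eigenbasis. Ordering both bases as in \eqref{eq:ordering}, this lets me write
\[
P^{w_{G_n, X}}_{I_{\mu_j}} = \sum_{i \in I_{\mu_j}} (\phi_i^n)^X \otimes (\phi_i^n)^X.
\]
Evaluating on $f_n^X$ and using the isometry identity \eqref{eq:square-root-factor} to rewrite $\langle f_n^X,(\phi_i^n)^X\rangle_{L^2(X)} = \langle f_n, \phi_i^n\rangle_{\bbC^n} = \widehat{f_n}(\phi_i^n)$, I recover exactly the required identity
\[
P^{w_{G_n, X}}_{I_{\mu_j}}(f_n^X) = \sum_{i \in I_{\mu_j}} \widehat{f_n}(\phi_i^n)(\phi_i^n)^X.
\]

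Substituting this identity into the two convergence conclusions of Theorem~\ref{thm:convergence} gives both parts of the corollary at once: the first claim from the pointwise (in $j$) $L^2$-convergence, and the second claim — which needs the hypothesis $P_0^w(f) = \mathbf{0}$ — from \eqref{eq:converg2}. Since all the nontrivial analysis (spectral convergence of the projections in Hilbert--Schmidt norm, and the tail estimate using bandlimited approximations of $f$) has already been carried out inside Theorem~\ref{thm:convergence}, there is no real obstacle here; the only delicate point is notational bookkeeping, namely that the index set $I_{\mu_j}$, defined with respect to the ordering of the nonzero eigenvalues of $w$, indexes the corresponding eigenvectors of $w_{G_n, X}$ — and hence, via Lemma~\ref{lem:graph-graphon-spec}, the eigenvectors $\phi_i^n$ of $A_{G_n}$ — consistently with the ordering used in the statement.
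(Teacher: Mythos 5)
Your proposal is correct and follows essentially the same route as the paper's own proof: apply Theorem~\ref{thm:convergence} to the norm-convergent sequence $\{(w_{G_n,X},f_n^X)\}$, and use Lemma~\ref{lem:graph-graphon-spec} together with the isometry \eqref{eq:square-root-factor} to identify $P^{w_{G_n,X}}_{I_{\mu_j}}(f_n^X)$ with $\sum_{i\in I_{\mu_j}}\widehat{f_n}(\phi_i^n)(\phi_i^n)^X$. Your explicit remark about the indexing of $I_{\mu_j}$ via the ordering \eqref{eq:ordering} is exactly the bookkeeping point the paper also relies on.
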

\begin{proof}
By Lemma~\ref{lem:graph-graphon-spec}, the sequence  $\{\lambda^n_i\}$  gives nonzero eigenvalues of the graphon  $w_{G_n,X}$ listed as in (\ref{eq:ordering}),
with corresponding eigenvectors $(\phi^n_i)^X$. \emph{Wlog} assume $G_n$ has $n$ vertices.
So, if $\mu_j\neq 0$, we have
\begin{eqnarray*}
P^{w_{G_n,X}}_{I_{\mu_j}}(f_n^X)=\sum_{i\in I_{\mu_j}}\langle f_n^X, (\phi^n_i)^X\rangle_{L^2(X)}(\phi^n_i)^X
=\sum_{i\in I_{\mu_j}}\langle f_n, \phi^n_i\rangle_{{\bbC}^{n}}(\phi^n_i)^X=\sum_{i\in I_{\mu_j}}\widehat{f_n}(\phi^n_i)(\phi^n_i)^X.
\end{eqnarray*}
Now, applying Theorem~\ref{thm:convergence} to the converging graph signal sequence $(w_{G_n,X},f_n^X)\to (w,f)$ finishes the proof. 
%
%
\end{proof}

For non-derogatory graphons, this corollary strengthens the previously known convergence result from \cite{RuizChamonRibeiro21}. Namely, suppose $\mu_j$ has multiplicity 1, so $I_{\mu_j}=\{\lambda_j\}$. Then $P^w_{\mu_j}(f) =\langle f, \phi_j\rangle\phi_j$, where $\phi_j$ is the $\lambda_j$-eigenvector of $w$. The corollary then states that
$$
\widehat{f_n}(\phi_j^n) {(\phi^n_j)^X} \rightarrow \langle f, \phi_j\rangle\ \phi_j \mbox{ as } n\rightarrow \infty.
$$
Since the functions $(\phi^n_j)^X$ and  $\phi_j$ are elements of $L^2(X)$ with unit norm, this implies that 
$$
\widehat{f_n}(\phi_j^n)\rightarrow \langle f, \phi_j\rangle \mbox{ as } n\rightarrow \infty.
$$
In addition, if $f$ is $c$-bandlimited for some $c>0$, then $P_0^w(f)={\mathbf 0}$, and we get
$$
\sum_{j\in\bbZ^*}|\widehat{f_n}(\phi_j^n)- \langle f, \phi_j\rangle |_2^2\rightarrow 0\mbox{ as } n\rightarrow \infty.
$$
Note that the scaling factor in the theorem of~\cite{RuizChamonRibeiro21} does not appear here. This is due to the fact that we have incorporated a
scaling factor of $\sqrt{n}$ in \eqref{eqn:fX}.

\subsection{Interpretation of Theorem \ref{thm:convergence} and its applications}
The graphon Fourier transform as introduced in Definition~\ref{def:NC-Fourier} is a vector-valued transform, which provides a decomposition for any given signal into projections of the signal onto each eigenspace of $T_w$. 
This definition differs from the previously known approach, where graphon Fourier transform was modeled after classical (Abelian) harmonic analysis, and the Fourier coefficients were simply defined as real/complex numbers.
The necessity for Definition~\ref{def:NC-Fourier} becomes apparent when one deals with graphons which possess eigenvalues of higher multiplicities.
In such cases, convergence only occurs at the level of eigenspaces. 

Suppose a graphon has an eigenvalue $\lambda$ with multiplicity $k$. Due to random fluctuations, samples from the graphon will likely have $k$ distinct eigenvalues close to $\lambda$. Our result indicates that the space spanned by the eigenvectors of those $k$ eigenvalues will be increasingly similar to the eigenspace of the graphon corresponding to $\lambda$, {as the size of the sampled graph increases}. However, there is no guarantee that the individual eigenvectors of the samples converge. 
We therefore argue that if several eigenvalues of the graph sequence converge to a single (repeated) eigenvalue of the limit graphon, then the corresponding eigenvectors  should be considered in their totality, and not individually. 

A special case  occurs when $T_w$ is of finite rank. Here, the set $I_w$ of nonzero (repeated) eigenvalues is finite. We first note  that, in this case, the second convergence result (\ref{eq:converg2}) follows directly from (\ref{eq:converg}), and thus the condition $P^w_0(f)=\mathbf{0}$ is not necessary.  Second, we observe that, for large $n$, a sample graph $G\sim \mathcal{G}(n,w)$ drawn from a finite rank graphon $w$ will likely have more non-zero eigenvalues than $w$. Namely, since edges are chosen independently at random, the rank of the adjacency matrix of a $w$-random graph, and thus its number of non-zero eigenvalues,   will likely grow to infinity as the size increases. 

As a simple example, let $\{G_n\}$ be a sequence of $w$-random graphs of increasing size, sampled from a constant graphon $w\equiv p$. We know from Theorem \ref{thm:szegedy-spectra} that the sequence $\{\lambda_1^n\}$, consisting of the largest eigenvalue of the adjacency matrix of each graph $G_n$, will converge to  $\lambda_1=p$, while all smaller eigenvalues will converge to zero.  By Theorem \ref{thm:convergence}, the eigenvectors of $G_n$ corresponding to the eigenvalues $\lambda^n_i$ with $i>1$ will converge to the kernel of $T_w$, and thus will not play a role in the spectral decomposition of $T_w$. 
A similar situation occurs for any finite rank graphon. That is, for any index $j$ outside $I_w$, the sequence of eigenvalues 
$\{\lambda_j^n\}_n$ converges to 0, and the associated sequence of eigenvectors converges to the kernel of $T_w$.
Our results suggest that such eigenvectors should be considered as sampling noise. Thus, an efficient analysis of the graph Fourier transform should only focus on eigenvalues with indices in $I_w$.

%
We suggest an approach for a unified Fourier analysis applicable to all graphs sampled from a given graphon $w:X\times X\to [0,1]$. 
Namely, we can propose as a graph Fourier transform, the projection onto eigenspaces of $T_w$. 
Our results show that, for large graphs, this Fourier transform will be similar to the GFT derived from the spectral decomposition of the adjacency matrix of the graph itself. 
\blue{This viewpoint is similar to the transferability results in \cite{neural1,neural2} for graph neural networks.}
\begin{example}[Watts-Strogatz model]\label{exp:Watts-Strogatz}
Consider the graphon $w:[0,1]^2\rightarrow [0,1]$ defined as follows. For all $x,y\in [0,1]$, let
$$
w(x,y)=\left\{ \begin{array}{ll}
1-p &\mbox{if }|x-y|\leq d\mbox{ or }|x-y|\geq 1-d,\\
p & \mbox{otherwise},
\end{array}\right.
$$
where $p,d\in (0,\frac{1}{2})$ are parameters of the model. 
The graphon $w$ is a Cayley graphon on the 1-dimensional torus (see Example \ref{exp:torus} for details). 
Random graphs drawn from $w$ have a natural circular layout: each vertex can be identified with a point $e^{2\pi i x}$ on the unit circle. Then each vertex is connected with probability $1-p$ to vertices that are close (in angular distance), and with probability $p$ to any other vertex. When $p$ is small, this graphon corresponds closely to the Watts-Strogatz model first proposed in \cite{WattsStrogatz98}, which is widely used to model so-called ``small-world'' networks. 
\begin{figure}[ht]
\centerline{\includegraphics[scale = 0.25]{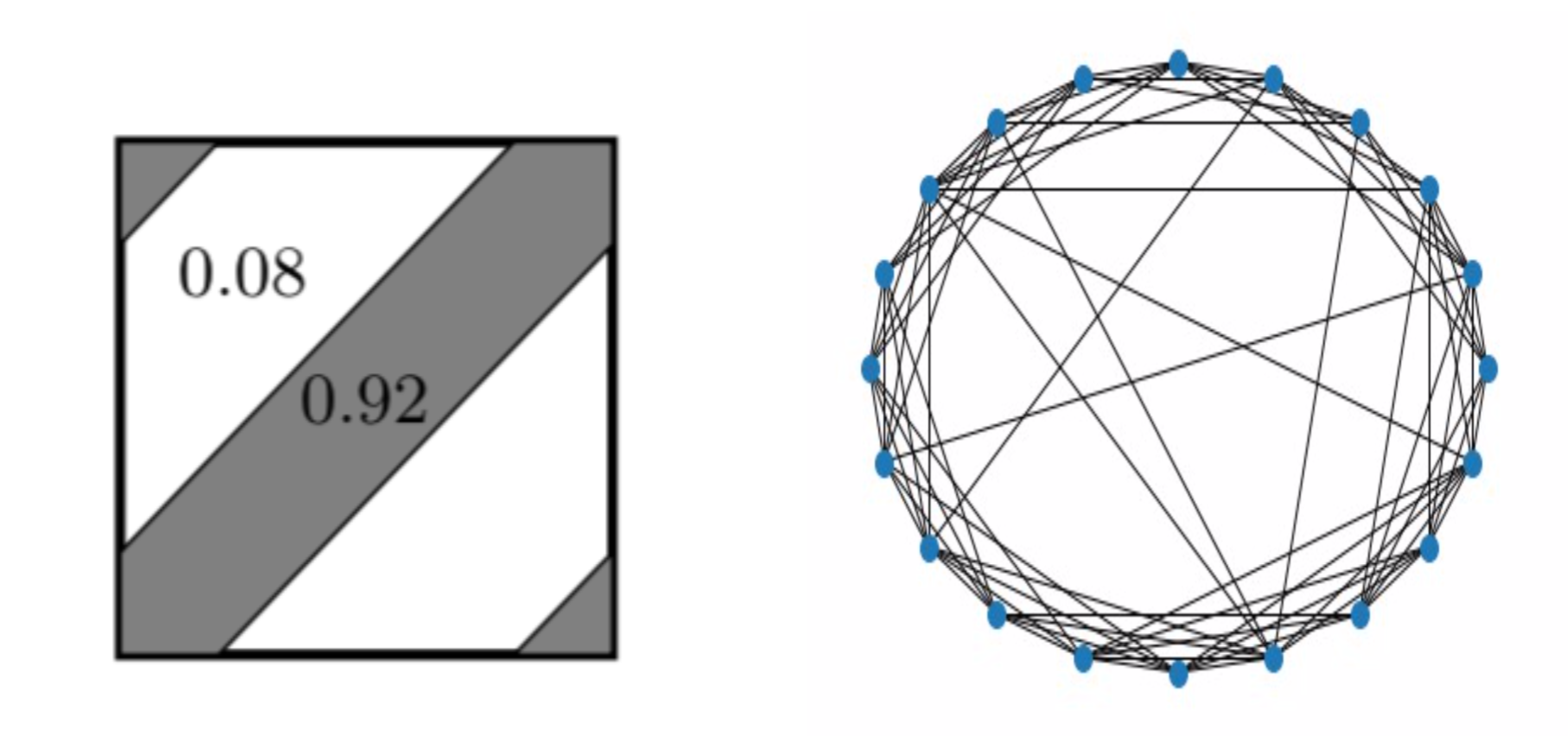}}
\caption{\blue{Cayley graphon on the 1-dim torus with parameters $d=0.2, p=0.08$ and a graph sampled from it.}}
\label{fg:projections}
\end{figure}

A straightforward calculation shows that the eigenvalues of $T_w$ are 
$$\left\{\frac{(1-2p)\sin(2\pi kd)}{\pi k}: k\in \bbZ^*\right\}\cup \{p+2d-4pd\}.$$
Taking $d=p=0.1$ and using notation as in \ref{notation:notation-thm1}, the first three eigenvalues are 
$$\lambda_1=0.6p+0.2, \lambda_2=\lambda_3=\left(\frac{1-2p}{\pi}\right)\sin(0.2\pi).$$ 
Then $\mu_2=\lambda_2=\lambda_3$, $I_{\mu_2}=\{ 2,3\}$, and the eigenspace corresponding to $\mu_2$ has dimension 2. 

Let $\{ G_n\}$ be a sequence of $w$-random graphs $G_n\sim {\mathcal{G}}(n,w)$. Our convergence result tells us that for large  $n$, the adjacency matrix of $G_n$, interpreted as a graphon, will have second and third largest positive eigenvalues $\lambda_2^n$ and $\lambda_3^n$ close to $\mu_2$. However, due to stochastic variation it is unlikely that $\lambda_2^n=\lambda_3^n$. Corollary \ref{cor:GraphConvergence} tells us that the space spanned by the $\lambda_2^n$- and $\lambda_3^n$-eigenvector converges to the eigenspace corresponding to $\mu_2$ (in the sense of the convergence of the associated orthogonal projections). It does not follow, {and is likely not true}, that the sequences  $\{\widehat{f}(\lambda^n_2)\}$ and  $\{\widehat{f}(\lambda^n_3)\}$ each converge.
We can then conclude that the graph Fourier coefficients $\widehat{f}(\lambda^n_2)$ and $\widehat{f}(\lambda^n_3)$ have little significance individually, but should be considered jointly. 
\end{example}

\blue{
\begin{example}\label{ex:S3_matrix}
Consider the graphon $w_M\in\mathcal{W}_0$ represented by the model matrix $M$, where 
$$M=\begin{bmatrix} 0.6 & 0.3 & 0.1 & 0 & 0 & 0 \\
                             0.3 & 0.6 & 0 & 0 & 0.1 & 0 \\
                             0.1 & 0 & 0.6 & 0.3 & 0 & 0 \\
                             0 & 0 & 0.3 & 0.6 & 0 & 0.1\\
                             0 & 0.1 & 0 & 0 & 0.6 & 0.3 \\
                             0 & 0 & 0 & 0.1 & 0.3 & 0.6 \\
 \end{bmatrix},$$
and $w_M$ has constant value $M_{i,j}$ on the sets $I_i\times I_j$, for $1\leq i,j\leq 6$. This is a Cayley graphon on the group $\mathbb{S}_3$; see also Example \ref{exp:S3}.

 We sample from this graphon as follows. We generate a graph $G_{N}$ with vertex set $V=\bigcup_{i=1}^6 V_i$, where $|V_i|=N$ for $1\leq i\leq 6$. Edges are added independently, with edge probabilities given by the model matrix. That is, the probability that a vertex $x\in V_i$ and $y\in V_j$ form an edge equals $M_{i,j}$. It is straightforward to show that the sequence $G_N$ converges to $w_M$.
 
 The six eigenvalues of the model matrix are shown in the top row of the table below. Note that there are two pairs of eigenvalues with multiplicity 2.  We sample ten graphs $G_N$ according to the process described above, with $N=1000$.  The adjacency matrices of the sampled graphs all have more than 6 non-zero eigenvalues. The first six eigenvalues of the samples are very similar to the non-zero eigenvalues of the model matrix, as predicted by Theorem \ref{thm:szegedy-spectra}. The seventh eigenvalue of the sample matrices demonstrates that the eigenvalues beyond the sixth eigenvalue converge to zero.
 
  \begin{table}[h]
     \centering
 \begin{tabular}{|l|c|c|c|c|c|c|c|}\hline
                        & $\lambda_1$ & $\lambda_2$   &  $\lambda_3$ & $ \lambda_4$    & $\lambda_5$    & $\lambda_6$  & $\lambda_7$ \\ \hline
 Model:            & 1.0000 & 0.8646 & 0.8646 & 0.3354 & 0.3354 & 0.2000 &  0 \\
 \hline
 Sample 1:       & 1.0005 & 0.8653 & 0.8651 & 0.3371 & 0.3357 & 0.2020 & 0.0457 \\
 Sample 2:       & 0.9999 & 0.8648 & 0.8642 & 0.3365 & 0.3352 & 0.2015 & 0.0457 \\
 Sample 3:       & 1.0001 & 0.8648 & 0.8643 & 0.3373 & 0.3364 & 0.2023 & 0.0457 \\
 Sample 4:       & 1.0001 & 0.8644 & 0.8644 & 0.3374 & 0.3367 & 0.2025 & 0.0457 \\
 Sample 5:       & 1.0000 & 0.8656 & 0.8642 & 0.3369 & 0.3364 & 0.2025 & 0.0457 \\
 Sample 6:       & 0.9998 & 0.8645 & 0.8639 & 0.3362 & 0.3357 & 0.2014 & 0.0457 \\
 Sample 7:       & 1.0004 & 0.8650 & 0.8646 & 0.3367 & 0.3358 & 0.2017 & 0.0458 \\
 Sample 8:       & 0.9998 & 0.8647 & 0.8638 & 0.3360 & 0.3354 & 0.2012 & 0.0458 \\
 Sample 9:       & 0.9997 & 0.8649 & 0.8635 & 0.3376 & 0.3370 & 0.2028 & 0.0457 \\
 Sample 10:     & 0.9998 &  0.8646 & 0.8643 & 0.3369 & 0.3367 & 0.2025 & 0.0457 \\ \hline
 \end{tabular}
      \caption{Eigenvalues of the model matrix and the adjacency matrices of the samples.}
     \label{tab:eigenvalues}
      \end{table}

Using a signal $f$ that is 1 on $V_1$ and zero elsewhere, we compute the graph Fourier coefficients. The results are given in Table \ref{tab:GFT}. We see that for coefficients 1 and 6, which correspond to eigenvalues with multiplicity 1, the values of all 10 samples are very similar. (Apart from the difference in sign, which is due to the fact that eigenvectors are unique up to sign.) However, this is not the case for the coefficients corresponding to eigenvalues with higher multiplicities.
To illustrate this, we focus on coefficients
2 and 3, computed from the projections of $f$ onto eigenvectors 2 and 3 ($\phi_2, \phi_3$) of each of the sampled graphs.

 \begin{table}[h]
  \begin{tabular}{|l|c|c|c|c|c|c|c|}\hline
&                         $\langle f,\phi_1\rangle $ &  $\langle f,\phi_2\rangle $ &   $\langle f,\phi_3\rangle $ &   $\langle f,\phi_4\rangle $ &  $\langle f,\phi_5\rangle $ &  $\langle f,\phi_6\rangle $  &  $\langle f,\phi_7\rangle $ \\ \hline
    Sample 1 &       -12.8538 & -13.8164 & -11.9317& -5.1438& -17.4057& -12.9621& 0.0178 \\
Sample 2 & -12.8774 &  -15.3514 & 9.8731 & -1.4261 & -18.1699 & -12.8366 & -0.0057 \\
Sample 3 & -12.9054 & -18.1773 & -1.6169 & -16.9128 & -6.8229 & 12.7908 & 0.0168 \\
Sample 4 & 12.9197 & -11.6449 & -14.0502 & -9.4462 & 15.5340 & 12.8565 & -0.0224\\
Sample 5 &-12.8981 & -12.9253 &12.8846 & -12.4981 & 13.2536 & 12.8339 & -0.0071\\
Sample 6 & -12.9283 & -16.6597 & 7.3625 & 11.5344 & 14.1140 & -12.8307 & 0.0622 \\
Sample 7 & 12.8443 & 5.5619 & -17.4151 & 9.6619 & -15.4254 & -12.8584 & 0.0281 \\
Sample 8 & 12.8231 & 11.0069 & -14.6563 & 18.2037 & 0.7167 & -12.7901 & 0.0715 \\ 
Sample 9 & -12.9264 & -16.9912 & 6.6391 & -3.8731 & -17.8276 & 12.7807 & -0.0148\\
Sample 10 & -12.9554 & 10.4365 & 14.9858 & 17.3406 & -5.6170 & -12.7360 & 0.0066 \\ \hline
\end{tabular}
\caption{Fourier coefficients of the sampled graphs.}
\label{tab:GFT}
\end{table}

 In the sample graphs, the eigenvectors $\phi_2, \phi_3$ do not belong to a single eigenspace. As shown in Table~\ref{tab:GFT}, 
 the modulus of the individual graph Fourier coefficients corresponding to $\phi_2$ (respectively, $\phi_3$) do not converge.
 Figure \ref{fg:projections} shows that the graph Fourier coefficients $\widehat{f}(\phi_2)=\langle f,\phi_2\rangle$ and $\widehat{f}(\phi_3)=\langle f,\phi_3\rangle$ vary greatly from sample to sample. The blue dots are the projections of the signal $f$ onto the eigenvectors $\phi_2$ and $\phi_3$ of the sampled graphs: the $x$-coordinate of each dot is the inner product $\langle f,\phi_2\rangle$, and the $y$-coordinate equals $\langle f,\phi_3\rangle$. The dots do not cluster together, since the individual values of $\langle f,\phi_2\rangle$ and $\langle f,\phi_3\rangle$ do not converge. However, the dots lie close to  a circle; this shows that the length of the projection onto the eigenspace spanned by $\phi_2$ and $\phi_3$ does converge. This is indeed an easy consequence of the fact that the vectors $\langle f,\phi_2\rangle\phi_2+\langle f,\phi_3\rangle\phi_3$ converge as the size of $G_N$ grows (by Theorem~\ref{thm:convergence}).

In Section \ref{sec:cayley}, we will show how to choose a basis to define a graph Fourier transform that is sampled from a  Cayley graphon.
The red diamond shows the projection of $f$ onto the vectors of this basis which correspond to eigenvalue $\mu_2$. As predicted by the theory, this projection falls on the same circle.

\begin{figure}[ht]
\centerline{\includegraphics[scale = 0.6]{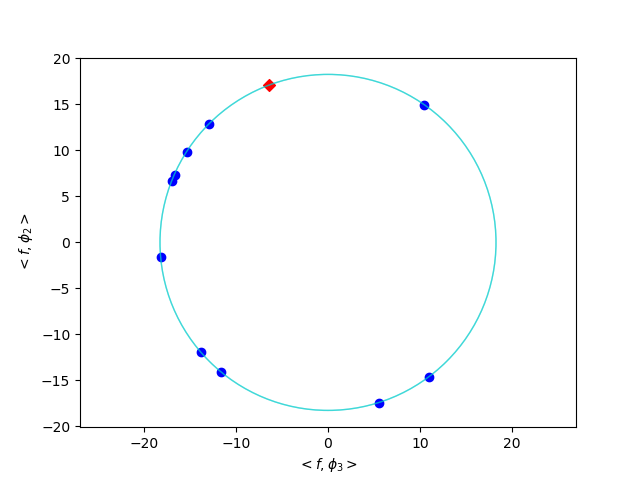}}
\caption{Graph Fourier coefficients 2 and 3 for the different samples.}
\label{fg:projections}
\end{figure}
\end{example}}

\subsection{Application: Filter Design}

In graph signal processing, the GFT guides the design of graph filters. Diffusion of a graph signal reflects the structure of the graph. Therefore, the graph shift operator $S$ is often taken to be the adjacency matrix $A$. A \emph{polynomial graph filter} $H$ on a graph with $n$ vertices is any polynomial in $A$ (see for example \cite{2018:Ortega:GSPOverview}):
$$
H=\sum_{k=0}^m h_k A^k.
$$
Let $h$ be the polynomial $h(x)=\sum_{k=0}^m h_k x^k$. It follows directly from the definition of GFT and the spectral decomposition of the adjacency matrix that, for each eigenvalue $\lambda_i$ of $A$ with associated eigenvector $\phi_i$:
\begin{equation}\label{eq:graphfilter}
\widehat{Hf}(\phi_i)=h(\lambda_i)\widehat{f}(\phi_i).
\end{equation}
As proposed in \cite{MorencyLeus21}, this approach can be extended to graphons as follows.  The shift operator of a graphon $w:X\times X\rightarrow [0,1]$ is the associated operator $T_w$, and a graphon filter is likewise defined as a polynomial in $T_w$:
$$
H=\sum_{k=0}^m h_kT_w^k.
$$
Using the spectral decomposition of $T_w$ and adopting the notation from \ref{notation:notation-thm1}, we have that, for each $f\in L^2(X)$,
$$
Hf=h_0f+ \sum_{k=1}^m h_k \sum_{j:\mu_j\neq 0} \mu_j^k P^w_{I_{\mu_j}}(f).  
$$
As before, let $h$ be the polynomial $h(x)=\sum_{k=0}^m h_kx^k$. Using our extended definition of the graphon Fourier transform as given in Definition \ref{def:NC-Fourier}, we then have that, for all $\mu_j\neq 0$, 
\begin{equation}\label{eq:graphonfilter}
\widehat{Hf}(\mu_j)=P^w_{I_{\mu_j}}(Hf)=h(\mu_j)P_{I_{\mu_j}}^wf=h(\mu_j) \widehat{f}(\mu_j).
\end{equation}
Our convergence results then immediately imply the convergence of the filter response as stated below.
\begin{corollary}
Let $\{(G_n,f_n)\}$ be a sequence of  graph signals converging to a graphon signal $(w,f)$, and assume that the graphs $G_n$ and the graph signals $f_n$ are labeled so that  $(w_{G_n,X},f_n^X)$ converges in norm to $(w,f)$.

\green{For each $n$, let $A_n$ be the adjacency matrix of $G_n$,  and let its eigenvalues be denoted as $\{\lambda^n_i\}$, labeled as in (\ref{eq:ordering}), with corresponding eigenvectors $\phi^n_i$. Given a polynomial $h(x)=\sum_{k=0}^m h_kx^k$, for each $n$ let $H_{n}=\sum_{k=0}^m h_k A_n^k$.}
Then for each nonzero eigenvalue $\mu_j$ of $T_w$,
$$
\sum_{i\in I_{\mu_j}} \widehat{H_{n}f_n}(\phi_i^n) ({\phi^n_i})^X \rightarrow h(\mu_j)\widehat{f}(\mu_j)\mbox{ as } n\rightarrow \infty.
$$

%
\end{corollary}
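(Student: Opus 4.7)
The plan is to reduce this corollary to the graph--to--graphon convergence statement of Corollary~\ref{cor:GraphConvergence} combined with the spectral convergence in Theorem~\ref{thm:spectrum}. First I would recall that for each fixed $n$, the filter relation \eqref{eq:graphfilter} gives, for every eigenpair $(\lambda_i^n,\phi_i^n)$ of $A_n$, the identity $\widehat{H_n f_n}(\phi_i^n)=h(\lambda_i^n)\widehat{f_n}(\phi_i^n)$. Hence the left-hand sum we must control can be rewritten as
\begin{equation*}
\sum_{i\in I_{\mu_j}} \widehat{H_n f_n}(\phi_i^n)(\phi_i^n)^X \;=\; \sum_{i\in I_{\mu_j}} h(\lambda_i^n)\,\widehat{f_n}(\phi_i^n)(\phi_i^n)^X.
\end{equation*}

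Next I would split this expression into a main term and a residual via the decomposition
\begin{equation*}
\sum_{i\in I_{\mu_j}} h(\lambda_i^n)\,\widehat{f_n}(\phi_i^n)(\phi_i^n)^X \;=\; h(\mu_j)\!\!\sum_{i\in I_{\mu_j}}\!\!\widehat{f_n}(\phi_i^n)(\phi_i^n)^X \;+\; \sum_{i\in I_{\mu_j}}\bigl(h(\lambda_i^n)-h(\mu_j)\bigr)\widehat{f_n}(\phi_i^n)(\phi_i^n)^X.
\end{equation*}
By Corollary~\ref{cor:GraphConvergence} applied to $(w_{G_n,X},f_n^X)\to(w,f)$, the first term on the right converges in $L^2(X)$ to $h(\mu_j)\,\widehat{f}(\mu_j)$, which is exactly the target of the corollary.

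It remains to show that the residual sum tends to $\mathbf{0}$ in $L^2(X)$. The index set $I_{\mu_j}$ is finite since $\mu_j\neq 0$ is an eigenvalue of $T_w$ of finite multiplicity, so it suffices to handle each summand separately. By Theorem~\ref{thm:spectrum}, for every $i\in I_{\mu_j}$ we have $\lambda_i^n\to\mu_j$ as $n\to\infty$, and the continuity of the polynomial $h$ then yields $h(\lambda_i^n)-h(\mu_j)\to 0$. On the other hand, each scalar Fourier coefficient is uniformly bounded, since Bessel's inequality gives $|\widehat{f_n}(\phi_i^n)|\le\|f_n\|_{\bbC^{|G_n|}}=\|f_n^X\|_2$, and the latter is bounded because the sequence $\{f_n^X\}$ converges in $L^2(X)$ to $f$. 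Since $\|(\phi_i^n)^X\|_2=1$, each residual summand tends to zero in $L^2(X)$, and the finite sum does as well.

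Combining the two pieces gives the claimed $L^2(X)$-convergence. No genuine obstacle arises here; the only subtle point is to recognize that one cannot factor $h(\mu_j)$ out of the sum directly because the graph eigenvalues $\lambda_i^n$ corresponding to indices in $I_{\mu_j}$ are generically distinct for finite $n$, which is precisely why the main-term/residual splitting above is needed. Everything else is a clean application of \eqref{eq:graphfilter}, Theorem~\ref{thm:spectrum}, and Corollary~\ref{cor:GraphConvergence}.
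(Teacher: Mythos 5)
Your proof is correct and follows essentially the same route as the paper, which derives the statement from Equation \eqref{eq:graphfilter}, Corollary \ref{cor:GraphConvergence}, and the convergence $\lambda_i^n\to\mu_j$ for $i\in I_{\mu_j}$. You have simply written out in full the main-term/residual splitting that the paper leaves implicit, and your justification of the residual term (finiteness of $I_{\mu_j}$, continuity of $h$, and the uniform bound $|\widehat{f_n}(\phi_i^n)|\le\|f_n^X\|_2$ via the isometry \eqref{eq:square-root-factor}) is sound.
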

\begin{proof}
The first statement follows directly from Corollary~\ref{cor:GraphConvergence}, Equations \eqref{eq:graphfilter} and \eqref{eq:graphonfilter},  and the fact that 
$\lim_{n\to \infty}\lambda_i^n=\mu_j$ for each $i\in I_{\mu_j}$.
\end{proof}

This corollary gives strong evidence that, for large graphs sampled from a graphon $w$,  one should design graph filters with respect to the limiting graphon, rather than the graph itself. Also, when evaluating the effect of a filter on GFT, one should consider the Fourier coefficients of eigenvalues with indices in $I_{\mu_j}$ as a whole.

\section{Signal processing on Cayley graphons}\label{sec:cayley}
The instance-independent approach presented in this article is particularly favorable in the special case that the limit graphon is a Cayley  graphon.
In this case, we have the well-established and rich theory of group representations at our disposal, which we employ to obtain suitable graphon Fourier bases. Fourier analysis informed by representation theory of the Cayley graphon can lead to decomposition of signals into `meaningful' components; for an instance of this phenomenon, see \cite{permutahedron}.
Cayley graphons reflect the symmetries of the underlying group, and can be used to model real-life networks. For example, the Watts-Strogatz model from Example~\ref{exp:Watts-Strogatz} is a Cayley graphon. \blue{Also, graphons derived from the symmetric group $\mathbb{S}_k$ can be used to represent ranked data (see Example~\ref{exp:S3})}. In this section, we show how the representations of the underlying group naturally yield the spectral decomposition of the associated Cayley graphon, and can be used to define a universal GFT for samples from the graphon.

We fix the following notations throughout this section:  let $w$ be a Cayley graphon  on a  compact group $\bbG$ defined by a Cayley function $\gamma:\bbG\rightarrow [0,1]$  (see Section \ref{subsec:general-graphon} and Definition \ref{def:cayley-graphon} for precise descriptions). 

Applying Fourier analysis of non-Abelian groups  as discussed in Subsection \ref{subsec:FT-nonAbelian}, we obtain properties of the eigenvalues/eigenvectors of $T_w$, which we list in Theorem \ref{thm:eigenvector}. In the next lemma, we will see that the action of $T_w$ on a signal $f$ can be expressed in terms of a convolution operator, which is computationally preferred when dealing with representations.
%
\begin{lemma}\label{lem:Tconv}
Let $f\in L^2(\bbG)$. For almost every $x\in \bbG$, we have $T_w(f)(x)=(\widecheck{f}*\widecheck{\gamma})(x^{-1})$, where the ``check operation'' $f\mapsto \widecheck{f}$ on $L^1(\bbG)$ is defined as $\widecheck{f}(x)=f(x^{-1})$. Consequently, we have 
$$\widecheck{T_w(f)}(x)={(\widecheck{f}*\widecheck{\gamma})}(x).$$
\end{lemma}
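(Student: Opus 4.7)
The claim is essentially a bookkeeping identity, so the plan is to expand both sides from their definitions and match terms using the symmetries of the Haar measure on a compact group. I expect no real obstacle; the proof is a short calculation once we invoke unimodularity.

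First I would unfold the left-hand side. By definition of $T_w$ and the fact that $w(x,y)=\gamma(xy^{-1})$,
\[
T_w(f)(x)=\int_{\bbG}\gamma(xy^{-1})\,f(y)\,dy.
\]
Next I would unfold the right-hand side using the definition of convolution in \eqref{def:convolution} together with the definition of $\widecheck{(\cdot)}$. Evaluating the convolution at $x^{-1}$ gives
\[
(\widecheck{f}*\widecheck{\gamma})(x^{-1})
=\int_{\bbG}\widecheck{f}(y)\,\widecheck{\gamma}(y^{-1}x^{-1})\,dy
=\int_{\bbG}f(y^{-1})\,\gamma(xy)\,dy.
\]

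The main step is to match the two expressions by substituting $z=y^{-1}$ in the last integral. Since $\bbG$ is compact and hence unimodular, the Haar measure is invariant under the inversion map $y\mapsto y^{-1}$, so the change of variables yields
\[
\int_{\bbG}f(y^{-1})\,\gamma(xy)\,dy=\int_{\bbG}f(z)\,\gamma(xz^{-1})\,dz=T_w(f)(x),
\]
which is the first assertion (the equality holding for almost every $x$, which is all that is needed since $\gamma$ and $w$ are equivalence classes of measurable functions).

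Finally, the second assertion follows immediately from the first: substituting $x^{-1}$ for $x$ in the identity $T_w(f)(x)=(\widecheck{f}*\widecheck{\gamma})(x^{-1})$ gives $T_w(f)(x^{-1})=(\widecheck{f}*\widecheck{\gamma})(x)$, and the left-hand side is by definition $\widecheck{T_w(f)}(x)$. The only subtle point worth highlighting in the write-up is the use of unimodularity of compact groups, which justifies the substitution $y\mapsto y^{-1}$ without introducing a modular function.
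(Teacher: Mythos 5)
Your proof is correct and follows essentially the same route as the paper's: both reduce to the change of variables $y\mapsto y^{-1}$ under the inversion-invariant Haar measure, with you merely starting from the convolution side rather than from $T_w(f)(x)$. The explicit appeal to unimodularity of compact groups is a sound justification for that substitution.
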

\begin{proof}
For almost every $x\in \bbG$, we have $T_w(f)(x)=\int_{\bbG} w(x,y)f(y)\, dy=\int_{\bbG} \gamma(xy^{-1})f(y)\, dy$. So, applying the change of variable $y\mapsto y^{-1}$, we have
\begin{equation*}
T_w(f)(x)=\int_{\bbG} \widecheck{f}(y^{-1})\widecheck{\gamma}(yx^{-1})\, dy=\int_{\bbG} \widecheck{f}(y)\widecheck{\gamma}(y^{-1}x^{-1})\, dx=( \widecheck{f}*\widecheck{\gamma})(x^{-1}).
\end{equation*}
\end{proof}
As we will see in Theorem~\ref{thm:eigenvector}, the spectral analysis of matrices  $\pi(\gamma)$ play a central role in the spectral decomposition of $T_w$. 
\begin{lemma}\label{lem:pi(f)sa}
Let $\gamma$ be a Cayley function on a group $\bbG$, i.e.~$\gamma(x)=\gamma(x^{-1})$ for all $x\in \bbG$.
Then for every unitary representation $\pi:\bbG\to {\mathcal U}({\mathcal H}_\pi)$, the operator $\pi(\gamma)$ is self-adjoint. In particular, $\pi(\gamma)$ is diagonalizable, and its spectrum lies in ${\mathbb R}$. 
\end{lemma}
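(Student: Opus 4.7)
The plan is to derive self-adjointness of $\pi(\gamma)$ directly from Proposition~\ref{prop:FT-cpt-group}(iii), and then invoke standard spectral theory for the diagonalizability and reality of the spectrum. The key observation is that the hypotheses on $\gamma$ (namely, being $[0,1]$-valued and satisfying $\gamma(x) = \gamma(x^{-1})$) are tailored precisely to the involution $f \mapsto f^*$ defined by $f^*(x) = \overline{f(x^{-1})}$ from Proposition~\ref{prop:FT-cpt-group}(iii).

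First, I would verify the identity $\gamma^* = \gamma$. Since $\gamma$ takes values in $[0,1]$, it is real-valued, so $\overline{\gamma(x^{-1})} = \gamma(x^{-1})$; combining this with the symmetry property $\gamma(x^{-1}) = \gamma(x)$ gives $\gamma^*(x) = \gamma(x)$ for all $x \in \bbG$. Since $\gamma \in L^\infty(\bbG) \subseteq L^1(\bbG)$ (the Haar measure is finite because $\bbG$ is compact), Proposition~\ref{prop:FT-cpt-group}(iii) applies and yields
\[
\pi(\gamma)^* = \pi(\gamma^*) = \pi(\gamma),
\]
so $\pi(\gamma)$ is self-adjoint.

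For the second assertion, I would note that $\pi(\gamma) = \int_\bbG \gamma(y)\pi(y)\,dy$ is bounded (indeed $\|\pi(\gamma)\| \leq \|\gamma\|_1 \leq 1$, since $\pi(y)$ is unitary for every $y$). The reality of the spectrum then follows from the general fact that a bounded self-adjoint operator on a Hilbert space has spectrum contained in $\mathbb{R}$. For diagonalizability, the most relevant case is $\HH_\pi = \bbC^{d_\pi}$ with $\pi$ irreducible (which is the setting used in Theorem~\ref{thm:eigenvector}); here self-adjointness of the matrix $\pi(\gamma)$ immediately yields diagonalizability via an orthonormal basis of eigenvectors by the finite-dimensional spectral theorem. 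For general unitary representations of the compact group $\bbG$, the Peter--Weyl decomposition reduces matters to the finite-dimensional case, or one can simply invoke the spectral theorem for bounded self-adjoint operators.

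There is no substantial obstacle: the proof is essentially a one-line computation once Proposition~\ref{prop:FT-cpt-group}(iii) is in hand, and the remaining claims are textbook consequences of self-adjointness. The only mild care needed is checking that $\gamma$ lies in $L^1(\bbG)$ so that Proposition~\ref{prop:FT-cpt-group}(iii) applies, which is immediate from compactness of $\bbG$ and boundedness of $\gamma$.
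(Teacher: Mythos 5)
Your proof is correct and is in substance the paper's own argument: the paper verifies $\langle \pi(\gamma)^*\xi,\eta\rangle=\langle\pi(\gamma)\xi,\eta\rangle$ by the direct weak-integral computation (change of variable $x\mapsto x^{-1}$ plus $\gamma(x)=\gamma(x^{-1})$ and real-valuedness), which is exactly the proof of the identity $\pi(f^*)=\pi(f)^*$ specialized to $f=\gamma$ with $\gamma^*=\gamma$; you simply outsource that computation to Proposition~\ref{prop:FT-cpt-group}(iii). The one point to be careful about is that Proposition~\ref{prop:FT-cpt-group}(iii) is stated only for $\pi\in\widehat{\bbG}$, whereas the lemma asserts self-adjointness for \emph{every} unitary representation $\pi:\bbG\to\U(\HH_\pi)$ (and the paper does use it beyond the irreducible case, e.g.\ implicitly for the left regular representation via $T_w=L(\gamma)$). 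This is not a real gap --- the identity $\pi(f^*)=\pi(f)^*$ holds for any unitary representation with the same one-line computation --- but as written your citation does not literally cover the full statement, so you should either note that the proposition's proof extends verbatim or carry out the short weak-integral computation as the paper does. Your remarks on diagonalizability and reality of the spectrum are fine and match the paper's (tacit) use of the finite-dimensional spectral theorem for the irreducible case.
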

\begin{proof}
Recall that $\pi(\gamma)\in {\mathcal B}({\mathcal H}_\pi)$ is defined as $\int_{\bbG} \gamma(x)\pi(x)\, dx$, where the integration is with respect to the Haar measure of $\bbG$. This integral should be interpreted weakly, that is,
$$\left\langle\left(\int_{\bbG} \gamma(x)\pi(x) \, dx\right)\xi,\eta\right\rangle= \int_{\bbG} \gamma(x)\langle\pi(x)\xi,\eta\rangle \, dx,$$
for each $\xi,\eta$ in the Hilbert space of $\pi$. For an arbitrary pair  $\xi,\eta\in{\mathcal H}_\pi$, we have
\begin{eqnarray*}
\left\langle\left(\int_{\bbG} \gamma(x)\pi(x) \, dx\right)^*\xi,\eta\right\rangle&=&\overline{\left\langle\left(\int_{\bbG} \gamma(x)\pi(x) \, dx\right)\eta,\xi\right\rangle}
=\overline{\int_{\bbG} \gamma(x)\langle\pi(x)\eta,\xi\rangle \, dx}\\
&=&\int_{\bbG} \overline{\gamma(x)}\overline{\langle\pi(x)\eta,\xi}\rangle \, dx
=\int_{\bbG} \gamma(x)\langle\pi(x^{-1})\xi,\eta\rangle \, dx\\
&=&\int_{\bbG} \gamma(x^{-1})\langle\pi(x)\xi,\eta\rangle \, dx=\langle\pi(\gamma)\xi,\eta\rangle,
\end{eqnarray*}
where we used the change of variable $x\mapsto x^{-1}$, and the fact that $\gamma(x)=\gamma(x^{-1})$.
\end{proof}
\begin{theorem}\label{thm:eigenvector}
Let $w:\bbG\times \bbG\to [0,1]$ be the Cayley graphon defined by a Cayley function $\gamma:\bbG\to [0,1]$ on a compact group $\bbG$.
\begin{itemize}
\item[(i)] The set of eigenvalues of $T_w$ is given as $\bigcup_{\pi\in\widehat{\bbG}}\left\{\mbox{eigenvalues of }\ \pi(\gamma)\right\}$.
\item[(ii)] For every nonzero eigenvalue $\lambda$ of $T_w$, there are  finitely many $\pi\in \widehat{\bbG}$ such that $\lambda\in{\rm Spec}(\pi(\gamma))$. We denote this finite set by $\widehat{\bbG}_{\lambda,\gamma}$.
\item[(iii)] Let $0\neq \lambda$ be an eigenvalue of $T_w$. Then $\lambda$-eigenvectors $\phi\in L^2(\bbG)$ can be characterized as 
$$\phi(x)=\sum_{\pi\in \widehat{\bbG}_{\lambda,\gamma}} d_\pi \overline{{\rm Tr}[A_\pi\pi(x)^*]},$$
where $A_\pi$ is a matrix with the property that every one of its columns is either zero, or a $\lambda$-eigenvector for $\pi(\gamma)$.
(Note that at least one of the $A_\pi$'s must be nonzero.)
\item[(iv)] The multiplicity of every nonzero eigenvalue $\lambda$ of $T_w$ is given by 
$\sum_{\pi\in \widehat{\bbG}_{\lambda,\gamma}} d_\pi m_{\lambda,\pi},$
where  $m_{\lambda,\pi}$ is the multiplicity of the eigenvalue $\lambda$ for $\pi(\gamma).$
\end{itemize}
\end{theorem}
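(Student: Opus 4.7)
The plan is to translate the eigenvalue equation $T_w\phi=\lambda\phi$ into a family of finite-dimensional matrix equations via the Fourier transform of $\bbG$, and then recover $\phi$ via the inverse Fourier formula. Applying the check operation to $T_w\phi=\lambda\phi$ and invoking Lemma~\ref{lem:Tconv} together with the Cayley identity $\check{\gamma}=\gamma$ gives the convolution equation $\check{\phi}*\gamma=\lambda\check{\phi}$. Applying an arbitrary $\pi\in\widehat{\bbG}$ and using Proposition~\ref{prop:FT-cpt-group}(ii), this translates to
$$\pi(\check{\phi})\,\pi(\gamma)=\lambda\,\pi(\check{\phi})\quad\text{for every } \pi\in\widehat{\bbG},$$
and by Parseval (Proposition~\ref{prop:FT-cpt-group}(i)) the correspondence $\phi\leftrightarrow\{\pi(\check{\phi})\}_{\pi}$ is injective, so the two formulations are equivalent.

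Now fix $\lambda\neq 0$. If $\lambda\notin\mathrm{Spec}(\pi(\gamma))$, then $\pi(\gamma)-\lambda I$ is invertible and forces $\pi(\check{\phi})=0$; hence for any nonzero $\lambda$-eigenvector $\phi$ at least one $\pi$ must lie in $\widehat{\bbG}_{\lambda,\gamma}$, which proves the nontrivial inclusion in (i). For $\pi\in\widehat{\bbG}_{\lambda,\gamma}$, taking Hermitian adjoints of the matrix equation and using that $\pi(\gamma)$ is self-adjoint with real spectrum (Lemma~\ref{lem:pi(f)sa}) yields $\pi(\gamma)\,\pi(\check{\phi})^{*}=\lambda\,\pi(\check{\phi})^{*}$, so the columns of $A_\pi:=\pi(\check{\phi})^{*}$ are $\lambda$-eigenvectors of $\pi(\gamma)$ (or zero). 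Substituting $\pi(\check{\phi})=A_\pi^{*}$ into~\eqref{inverse-F-noncommutative}, then using $\phi(x)=\check{\phi}(x^{-1})$, the unitarity $\pi(x^{-1})=\pi(x)^{*}$, and the identity $\overline{\mathrm{Tr}(M)}=\mathrm{Tr}(M^{*})$, produces the trace expression in (iii). The reverse direction is a direct verification: any $\phi$ of that shape has Fourier data $\pi(\check{\phi})=A_\pi^{*}$ by uniqueness, satisfies the matrix identity for every $\pi$, and so is a $\lambda$-eigenvector of $T_w$, establishing (i) and (iii) jointly.

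For (iv) I count dimensions. For each $\pi\in\widehat{\bbG}_{\lambda,\gamma}$, the admissible matrices $A_\pi$ form a space of dimension $d_\pi\cdot m_{\lambda,\pi}$, since its $d_\pi$ columns range independently over the $m_{\lambda,\pi}$-dimensional $\lambda$-eigenspace of $\pi(\gamma)$. Schur's orthogonality (Proposition~\ref{prop:Schur-cpt-group}) makes the contributions from inequivalent irreducibles mutually $L^2$-orthogonal, so these pieces are linearly independent and the total multiplicity equals $\sum_{\pi\in\widehat{\bbG}_{\lambda,\gamma}} d_\pi m_{\lambda,\pi}$. Part (ii) is then immediate: $T_w$ is Hilbert--Schmidt, so every nonzero eigenvalue has finite multiplicity, forcing this sum, and hence the set $\widehat{\bbG}_{\lambda,\gamma}$, to be finite.

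The main obstacle I anticipate is the bookkeeping needed to match the Fourier-side data $\pi(\check{\phi})$ with the precise trace expression in (iii): several check operations, Hermitian adjoints, and complex conjugations must be threaded together carefully in order to arrive at exactly $\phi(x)=\sum_\pi d_\pi \overline{\mathrm{Tr}[A_\pi\pi(x)^{*}]}$ rather than a twisted variant. A secondary point is verifying that the finite sum in (iii) lies in $L^2(\bbG)$, which it does since each summand is a finite linear combination of coefficient functions $\pi_{ij}$ that belong to $L^2(\bbG)$ by Schur's relations.
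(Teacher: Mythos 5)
Your proposal is correct and, for parts (i), (iii) and (iv), follows essentially the same route as the paper: convert $T_w\phi=\lambda\phi$ into the convolution identity $\widecheck{\phi}*\gamma=\lambda\widecheck{\phi}$ via Lemma~\ref{lem:Tconv}, Fourier-transform it to the matrix equations $\pi(\widecheck{\phi})\big(\pi(\gamma)-\lambda I\big)=0$, pass to adjoints using Lemma~\ref{lem:pi(f)sa} so that the columns of $A_\pi:=\pi(\widecheck{\phi})^{*}$ --- which coincides with the paper's $\pi(\overline{\phi})$ by Proposition~\ref{prop:FT-cpt-group}(iii) --- become $\lambda$-eigenvectors of $\pi(\gamma)$, recover $\phi$ from the inversion formula \eqref{inverse-F-noncommutative}, and count dimensions with Schur orthogonality. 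Your trace bookkeeping checks out and lands on exactly the stated formula.

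The one genuine divergence is part (ii). The paper obtains finiteness of $\widehat{\bbG}_{\lambda,\gamma}$ directly from Parseval applied to $\gamma$, namely $\|\gamma\|_2^2=\sum_{\pi}d_\pi\,{\rm Tr}[\pi(\gamma)\pi(\gamma)^*]<\infty$, which in addition gives the quantitative bound $|\widehat{\bbG}_{\lambda,\gamma}|\le \|\gamma\|_2^2/\lambda^2$. You instead deduce (ii) from compactness of $T_w$ (finite multiplicity of any nonzero eigenvalue) together with the lower-bound half of (iv). This is valid and arguably more conceptual, but it imposes an ordering constraint: until (ii) is available you cannot call the sum in (iii) finite, so you must treat the inverse-Fourier expansion of $\widecheck{\phi}$ as an a priori $L^2$-convergent series --- or simply prove (ii) first via the one-line Parseval argument, as the paper does. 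A final expository point on (iv): besides orthogonality across inequivalent representations, you also need injectivity of the conjugate-linear map $A_\pi\mapsto d_\pi\overline{{\rm Tr}[A_\pi\pi(\cdot)^{*}]}$ for each fixed $\pi$; this again follows from Schur's relations, since the map expands in the orthogonal family $\{\pi_{j,i}\}$, and is the within-$\pi$ half of the linear-independence claim that the paper spells out explicitly.
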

\begin{proof}
To prove (i), suppose $0\neq \phi\in L^2(\bbG)$ is a $\lambda$-eigenvector of $T_w$, i.e.~$T_w(\phi)=\lambda \phi$ in $L^2(\bbG)$. 
By Lemma~\ref{lem:Tconv}, and the fact that $\gamma$ is a Cayley function
(i.e.~$\widecheck{\gamma}=\gamma$), this identity can be written as  $\widecheck{\phi}*{\gamma}=\lambda\widecheck{\phi}$.
%
Consequently, for every $\pi\in\widehat{\bbG}$, $\pi(\widecheck{\phi}*\gamma)=\pi(\widecheck{\phi})\pi(\gamma)=\lambda\pi(\widecheck{\phi})$. So by injectivity of the Fourier transform, $\phi$ is a $\lambda$-eigenvector of $T_w$ precisely when  for every $\pi\in\widehat{\bbG}$, we have
\begin{equation}\label{eq1-pf-cayley}
    \pi(\widecheck{\phi})(\pi(\gamma)-\lambda I_{d_\pi})=0,
\end{equation}
where $I_{d_\pi}$ is the identity matrix of dimension $d_\pi$. 
Taking matrix-adjoint from both sides of Equation \eqref{eq1-pf-cayley}, this equation can be written as 
\begin{equation}\label{eq:conv-fouroer}
(\pi(\gamma)-\lambda I_{d_\pi})\pi(\overline{\phi})=0  \ \mbox{ for every } \pi\in\widehat{\bbG}.
\end{equation}
Thus, we have:
\begin{itemize}
\item[(a)] If $\lambda$ is not an eigenvalue of $\pi(\gamma)$, then $\pi(\gamma)-\lambda I_{d_\pi}$ is invertible. So, $\pi(\overline{\phi})=0$.
\item[(b)] If $\lambda$ is an eigenvalue of $\pi(\gamma)$, then every nonzero column of the matrix $\pi(\overline{\phi})$ must be a  $\lambda$-eigenvector of $\pi(\gamma)$.
\end{itemize}
As a result, if $\lambda$ is not an eigenvalue of $\pi(\gamma)$ for any $\pi\in\widehat{\bbG}$, then $\phi=0$, contradicting our assumption. So $\lambda$ is an eigenvalue of $T_w$ with associated eigenvector $\phi$ if and only if it is an eigenvalue of $\pi(\gamma)$ for some $\pi\in \widehat{\bbG}$. This finishes the proof of (i). 

To prove (ii), we apply the Parseval identity for $\gamma$ as follows:
\begin{eqnarray*}
\|\gamma\|_2^2=\sum_{\pi\in\widehat{\bbG}} d_\pi {\rm Tr}[\pi(\gamma)\pi(\gamma)^*]=\sum_{\pi\in\widehat{\bbG}} d_\pi \left(\sum_{\lambda\in{\rm Spec}(\pi(\gamma))}\lambda^2\right).
\end{eqnarray*}
Since the above sum is finite, for every given $\lambda\neq 0$, there are only finitely many $\pi$ with $\lambda \in{\rm Spec}(\pi(\gamma))$.

To prove (iii), assume $\lambda$ is a nonzero eigenvalue of $T_w$,  and recall that 
$$\widehat{\bbG}_{\lambda,\gamma}=\left\{\pi\in \widehat{\bbG}:\ \lambda\in {\rm Spec}(\pi(\gamma))\right\}.$$
From (a) and (b), $0\neq \phi\in  L^2(\bbG)$ is a $\lambda$-eigenvector of $T_w$ if and only if 
\begin{itemize}
\item[(a$'$)] $\pi(\overline{\phi})=0$ for all $\pi\in \widehat{\bbG}\setminus\widehat{\bbG}_{\lambda,\gamma}$.
\item[(b$'$)] If $\pi\in\widehat{\bbG}_{\lambda,\gamma}$,  every nonzero column of the matrix $\pi(\overline{\phi})$ must be a  $\lambda$-eigenvector of $\pi(\gamma)$.
\end{itemize}
Using the inverse group Fourier transform (Equation~(\ref{inverse-F-noncommutative})), we get $\overline{\phi}(x)=\sum_{\pi\in \widehat{\bbG}_{\lambda,\phi}} d_\pi {\rm Tr}[\pi(\overline{\phi})\pi(x)^*]$.
Since this is a finite sum, there are no convergence issues to be considered here. Letting $A_\pi=\pi(\overline{\phi})$ finishes the proof of (iii).

To prove (iv), fix a nonzero eigenvalue $\lambda$ and a $\lambda$-eigenvector $\phi$ of $T_w$. By part (iii) of this theorem, 
$${\phi}(x)=\sum_{\pi\in \widehat{\bbG}_{\lambda,\gamma}} d_\pi \overline{{\rm Tr}[A_\pi\pi(x)^*]},$$
where every nonzero column of $A_\pi$ is a $\lambda$-eigenvector for $\pi(\gamma)$. 
For every $\pi\in\widehat{\bbG}_{\lambda,\gamma}$, let ${\mathcal E}_{\lambda, \pi(\gamma)}$ denote a fixed basis for the $\lambda$-eigenspace of $\pi(\gamma)$. Recall that $m_{\lambda,\pi}=|{\mathcal E}_{\lambda, \pi(\gamma)}|$.
It then follows immediately, from the above expression, that $\phi$ can be written as a linear combination of functions of the form $x\mapsto \overline{{\rm Tr}[A^{\pi}_{X, i}\ \pi(x)^*]}$, where $A^{\pi}_{X,i}$ denotes the matrix of size $d_\pi$ whose $i$'th column is $X\in {\mathcal E}_{\lambda,\pi(\gamma)}$, and its every other column is zero.
Applying a simple counting argument, we obtain the upper bound $\sum_{\pi\in \widehat{\bbG}_{\lambda,\gamma}} d_\pi m_{\lambda,\pi}$  for the multiplicity of the eigenvalue $\lambda$ of $T_w$.

To finish the proof, we will obtain the same number of independent $\lambda$-eigenvectors for $T_w$. 
From the definition of coefficient functions (Subsection \ref{subsec:FT-nonAbelian}), we observe that the $(i,j)$th entry of  $\pi(x)^*$ equals $\overline{\pi_{i,j}(x)}$. 
For $Z\in {\mathcal E}_{\lambda, \pi(\gamma)}$ represented as $Z=[z_j]_{j=1}^{d_\pi}$,
we have 
\begin{equation*}
    \overline{{\rm Tr}[A^{\pi}_{Z,i}\pi(x)^*]}=\sum_{j=1}^{d_\pi} \overline{z_j}{\pi_{i,j}(x)} \in {\rm Span}\{\pi_{i,j}: \ j=1,\ldots, d_\pi\}.
\end{equation*}
The above equation, together with Schur's orthogonality relations (Proposition~\ref{prop:Schur-cpt-group}), implies that 
\begin{itemize}
    \item[(i)] If $i\neq j$, then the functions $\overline{{\rm Tr}[A^{\pi}_{Z,i}\pi(x)^*]}$ and $\overline{{\rm Tr}[A^{\pi}_{Z,j}\pi(x)^*]}$ are orthogonal nonzero functions in $L^2(\bbG)$. 
    \item[(ii)] If $\pi,\sigma\in \widehat{\bbG}_{\lambda,\gamma}$ are distinct (inequivalent) representations, then  for every $X\in {\mathcal E}_{\lambda, \pi(\gamma)}$ and $Y\in {\mathcal E}_{\lambda, \sigma(\gamma)}$, and every $1\leq i\leq d_{\pi}$ and $1\leq j\leq d_{\sigma}$, we have that $\overline{{\rm Tr}[A^{\pi}_{X,i}\pi(x)^*]}$ and $\overline{{\rm Tr}[A^{\sigma}_{Y,j}\sigma(x)^*]}$ are orthogonal nonzero functions in $L^2(\bbG)$. 
    \item[(iii)] If $Y,Z\in {\mathcal E}_{\lambda, \pi(\gamma)}$ are distinct, then  $\overline{{\rm Tr}[A^{\pi}_{Y,i}\pi(x)^*]}$ and
     $\overline{{\rm Tr}[A^{\pi}_{Z,i}\pi(x)^*]}$ are orthogonal nonzero functions in $L^2(\bbG)$. 
\end{itemize} 
Parts (i) and (ii) follow directly from the statement of Schur's orthogonality relations. To prove (iii), take  distinct (orthogonal) elements $Y=[y_k]_{k=1}^{d_\pi}$ and
$Z=[z_j]_{j=1}^{d_\pi}$ of ${\mathcal E}_{\lambda, \pi(\gamma)}$. Using orthogonality relations between $\pi_{i,j}$ and $\pi_{i,k}$, we get:
\begin{eqnarray*}
&&\langle\overline{{\rm Tr}[A^{\pi}_{Z,i}\pi(\cdot)^*]}, \overline{{\rm Tr}[A^{\pi}_{Y,i}\pi(\cdot)^*]}\rangle_{L^2(\bbG)}
=
\langle\sum_{j=1}^{d_\pi} \overline{z_j}{\pi_{i,j}}, \sum_{k=1}^{d_\pi} \overline{y_k}{\pi_{i,k}}\rangle_{L^2(\bbG)}\\
&=&
\ \ \sum_{j=1}^{d_\pi}\sum_{k=1}^{d_\pi} \overline{z_j}y_k\langle{\pi_{i,j}}, {\pi_{i,k}}\rangle_{L^2(\bbG)}
=
\frac{1}{d_\pi}\sum_{j=1}^{d_\pi}\overline{z_j}y_j=0.
\end{eqnarray*}
Thus, the set of functions $\left\{\overline{{\rm Tr}[A^{\pi}_{X,i}\pi(x)^*]}: \pi\in\widehat{\bbG}_{\lambda,\gamma},\ X\in {\mathcal E}_{\lambda, \sigma(\gamma)},\ 1\leq i\leq d_\pi\right\}$ forms a basis for the $\lambda$-eigenbasis of $T_w$; this finishes the proof.
\end{proof}

Theorem \ref{thm:eigenvector} reduces the problem of finding a spectral decomposition for $T_w$ to finding spectral decompositions of $\pi(\gamma)$ for each $\pi\in\widehat{\bbG}$. This application of representation theory leads to significant simplification of the problem. Indeed, $T_w$ is an operator on the infinite-dimensional space $L^2(X)$, and obtaining a spectral decomposition for $T_w$ is a nontrivial task; whereas each $\pi(\gamma)$ is a finite-dimensional matrix.  

A special case arises when  $\bbG$ is Abelian. In this case, every representation $\chi\in\widehat{\bbG}$ is 1-dimensional. The following corollary uses Theorem \ref{thm:eigenvector} to describe eigenvalues and eigenvectors of $T_w$, when $w$ is the Cayley graphon of a compact Abelian group. In the statement below, $\mathcal F$ is used to refer to the group Fourier transform as defined in \eqref{F-noncommutative}.
\begin{corollary}\label{cor:Abelian}
Let $w:\bbG\times \bbG\to [0,1]$ be the Cayley graphon of a  compact Abelian group $\bbG$ defined by a Cayley function $\gamma:\bbG\to [0,1]$. 
\begin{itemize}
\item[(i)] For every $\lambda\in \bbR$, define ${\mathcal U}_\lambda:=\{\chi\in \widehat{\bbG}:\ ({\mathcal F}{\gamma})(\chi)=\lambda\}$. The set of eigenvalues of $T_w$ can be described as $\{  ({\mathcal F}{\gamma})(\chi) :\ \chi\in \widehat{\bbG}\}=\{\lambda\in \bbR:\ {\mathcal U}_\lambda\neq \emptyset\}$.
\item[(ii)] Any nonzero $\phi\in L^2(\bbG)$ such that  ${\mathcal F}{\overline{\phi}}$ is supported on ${\mathcal U}_\lambda$ is a $\lambda$-eigenvector of  $T_w$.
\end{itemize}
\end{corollary}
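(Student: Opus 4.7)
The plan is to derive both statements as direct specializations of Theorem~\ref{thm:eigenvector} to the Abelian setting, where every irreducible representation $\chi \in \widehat{\bbG}$ is one-dimensional. In this case the ``matrix'' $\chi(\gamma) = \int_{\bbG}\gamma(y)\chi(y)\,dy$ is simply the scalar $({\mathcal F}\gamma)(\chi)$, whose only eigenvalue is itself and whose only (nontrivial) eigenvector is any nonzero scalar. In particular the finite set $\widehat{\bbG}_{\lambda,\gamma}$ from Theorem~\ref{thm:eigenvector} coincides with ${\mathcal U}_\lambda$ when $\lambda \neq 0$.

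For part~(i), I would simply apply Theorem~\ref{thm:eigenvector}(i): the eigenvalues of $T_w$ form the union $\bigcup_{\chi \in \widehat{\bbG}}\{\text{eigenvalues of }\chi(\gamma)\}$. In the Abelian setting each such set is the singleton $\{({\mathcal F}\gamma)(\chi)\}$, so the union is exactly $\{({\mathcal F}\gamma)(\chi):\chi\in \widehat{\bbG}\}$, which by definition of ${\mathcal U}_\lambda$ equals $\{\lambda \in \bbR : {\mathcal U}_\lambda \neq \emptyset\}$.

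For part~(ii), the cleanest route is to revisit equation~\eqref{eq:conv-fouroer} from the proof of Theorem~\ref{thm:eigenvector}, which asserts that $\phi \in L^2(\bbG)$ is a $\lambda$-eigenvector of $T_w$ if and only if $(\chi(\gamma) - \lambda I_{d_\chi})\,\chi(\overline{\phi}) = 0$ for every $\chi \in \widehat{\bbG}$. In the Abelian case this degenerates into the scalar identity
\[
\bigl(({\mathcal F}\gamma)(\chi) - \lambda\bigr)\bigl({\mathcal F}\overline{\phi}\bigr)(\chi) = 0, \qquad \chi \in \widehat{\bbG}.
\]
If ${\mathcal F}\overline{\phi}$ is supported on ${\mathcal U}_\lambda$, then for $\chi \in {\mathcal U}_\lambda$ the first factor vanishes while for $\chi \notin {\mathcal U}_\lambda$ the second factor vanishes; hence the identity holds at every $\chi$, and by injectivity of the group Fourier transform we conclude $T_w\phi = \lambda\phi$.

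I do not anticipate any substantive obstacle, since the entire argument is a one-dimensional collapse of Theorem~\ref{thm:eigenvector}. The only point requiring mild care is the bookkeeping between $\phi$, $\overline{\phi}$, and $\widecheck{\phi}$: the support condition in (ii) is stated for ${\mathcal F}\overline{\phi}$ precisely because that is the object appearing in \eqref{eq:conv-fouroer}, so no additional symmetry assumption on $\gamma$ beyond $\gamma(x) = \gamma(x^{-1})$ is needed and no cleanup of the Fourier conventions from \eqref{F-noncommutative} is required.
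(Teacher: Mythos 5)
Your proposal is correct and matches the paper's intent exactly: the paper gives no separate argument, stating only that the corollary ``follows directly from Theorem~\ref{thm:eigenvector},'' and your one-dimensional specialization of that theorem (in particular of equation~\eqref{eq:conv-fouroer}, where $\chi(\gamma)$ collapses to the scalar $({\mathcal F}\gamma)(\chi)$ and $\chi(\overline{\phi})$ to $({\mathcal F}\overline{\phi})(\chi)$) is precisely the intended derivation, including the correct bookkeeping that places the support condition on ${\mathcal F}\overline{\phi}$. The only thing the paper adds is an independent, self-contained verification for the torus in Example~\ref{exp:torus} via classical Fourier series, which your argument does not need.
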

The above corollary follows directly from Theorem~\ref{thm:eigenvector}. We demonstrate a more direct proof for the Abelian case in the following example.
\begin{example}[Graphons on the 1-dimensional torus]
\label{exp:torus}
Consider the Abelian compact group $\bbT=\{e^{2\pi i x}: x\in[0,1)\}$, with multiplication as the group product. Let $\gamma:\bbT\to [0,1]$ be a
Cayley function, i.e.~
$\gamma(x)=\gamma(x^{-1})$ for all $x\in \bbT$.
Using the identification of $\bbT$ and $[0,1)$, the Lebesgue measure on $[0,1)$ is transferred to the Haar measure on $\bbT$. Let $w:\bbT\times \bbT\rightarrow [0,1]$ be the Cayley graphon defined by $\gamma$. The integral operator associated with $w$ is defined as 
$$T_w:L^2(\bbT)\to L^2(\bbT), \ (T_wf)(x)=\int_{\bbT}\gamma(xy^{-1}) f(y)\, dy=(f*\gamma)(x),$$
where the last equality holds as $\bbT$ is Abelian. To find eigenvalues/eigenvectors of $T_w$, we use classical Fourier analysis on $\bbT$, noting that $\widehat{\bbT}\simeq\bbZ$. In this example, we write $\widehat{f}(n)$ to denote the $n$'th Fourier coefficient of $f$.
Suppose $f\neq 0$ is a $\lambda$-eigenvector of $T_w$. Then, we have the following equivalent relations:
\begin{eqnarray*}
T_wf=\lambda f \mbox{ in } L^2(\bbT) &\Leftrightarrow & f*\gamma=\lambda f \mbox{ in } L^2(\bbT)\\
&\Leftrightarrow &  \mbox{ for every } n\in \bbZ,\ \widehat{f}(n)\widehat{\gamma}(n)=\lambda \widehat{f}(n) \\
&\Leftrightarrow & \mbox{ for every } \ n\in\bbZ,  \widehat{f}(n)=0 \mbox{ whenever } \widehat{\gamma}(n)\neq \lambda.
\end{eqnarray*}
Let ${\mathcal U}_\lambda=\{n\in\bbZ: \ \widehat{\gamma}(n)=\lambda\}$. If ${\mathcal U}_\lambda=\emptyset$, then we must have $\widehat{f}\equiv 0$, and consequently $f=0$; this is a contradiction with the choice of $f$ as a $\lambda$-eigenvector. On the other hand, if ${\mathcal U}_\lambda\neq \emptyset$, then any nonzero function $f$ whose Fourier series is supported on ${\mathcal U}_\lambda$ is a $\lambda$-eigenvector for $T_w$. 

We note that in this particular example, we can replace ``${\mathcal F}{\overline{f}}$ is supported on ${\mathcal U}_\lambda$'' with the phrase 
``${\mathcal F}{{f}}$ is supported on ${\mathcal U}_\lambda$'' in the statement of Corollary \ref{cor:Abelian} (ii). This is due to the fact that 
(i) $\widehat{\overline{f}}(n)=\overline{\widehat{f}(-n)}$ for every $n\in\bbZ$, and (ii) ${\mathcal U}_\lambda$ is closed under negation as $\gamma$ is real-valued.
\end{example}

Next, we show how to obtain an eigenbasis for the integral operator of a Cayley graphon, using its harmonic analysis.
Harmonic analysis of non-Abelian compact groups is mainly focused on the study of the group representations and their associated function spaces. 
An important (not irreducible) unitary representation of a group $\bbG$ is the \emph{left regular representation}, defined as $L:\bbG\to \U(L^2(\bbG))$, $(L(g) f)(h)=f(g^{-1}h)$, for $f\in L^2(\bbG)$ and $g,h\in \bbG$. 
The integral operator of a Cayley graphon can be expressed in terms of the left regular representation of the underlying group. 

\begin{remark}\label{remark:leftregular}
Let $w$ be a graphon on a group $\bbG$ defined by a Cayley function $\gamma$. There is a direct relation between the integral operator $T_w$ and the left regular representation $L$. Namely,
for $f\in  L^2(\bbG)$ and almost every $x\in \bbG$,  we have
\begin{eqnarray*}
T_w(f)(x)=\int_{\bbG} w(x,y)f(y)\, dy 
=\int_{\bbG} \gamma(xy^{-1})f(y)\, dy
&=&\int_{\bbG} \gamma(y)f(y^{-1}x) \, dy\\
&=&\int_{\bbG} \gamma(y)(L(y)f)(x) \, dy
\end{eqnarray*}
So, $T_w(f)=L(\gamma)f$. 
\end{remark}
To develop signal processing on Cayley graphons, we use the Peter-Weyl basis of $L^2(\bbG)$. 
The Peter-Weyl theorem (\cite[Theorem 5.12]{1995:Folland:HarmonicAnalysis}) asserts that the left regular representation of $\bbG$ is unitarily equivalent to $\bigoplus_{\pi\in \widehat {\bbG}} d_\pi \pi$, where $d_\pi$ denotes the dimension of $\pi$. That is, every irreducible representation of $\bbG$ appears in the decomposition of $L$ with multiplicity equal to the dimension of the representation. 
The orthogonal decomposition presented in the Peter-Weyl theorem and the precise orthogonality relations amongst the irreducible pieces play a central role in in the proof of the following proposition.
\begin{proposition}[Eigenbasis for Cayley graphons]\label{prop:basis}
Let $\bbG$ be a second countable compact group, and consider the Cayley graphon $w:\bbG\times \bbG\to[0,1]$ obtained from the Cayley function $\gamma:\bbG\to [0,1]$. For each $\pi$, let ${\mathcal E}_{\overline{\pi(\gamma)}}$ denote a fixed eigenbasis for $\overline{\pi(\gamma)}$, where the matrix $\overline{\pi(\gamma)}$ is obtained from $\pi(\gamma)$ by taking complex conjugation entry-wise. 
Then 
$$\bigcup_{\pi\in\widehat{\bbG}}\bigcup_{i:1,\ldots,d_\pi}\left\{\sum_{j=1}^{d_\pi}z_j\pi_{i,j}: \ 
\left[
\begin{array}{c}
z_1 \\
\vdots \\
z_{d_\pi}   
\end{array}
\right]
\in {\mathcal E}_{\overline{\pi(\gamma)}} \right\}$$
is an (orthogonal) eigenbasis for $T_w$. 
\end{proposition}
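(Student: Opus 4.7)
My plan is to verify three defining properties: that every listed function is an eigenvector of $T_w$, that the collection is pairwise orthogonal, and that it spans a dense subspace of $L^2(\bbG)$.

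For the eigenvector property, I fix $\pi\in\widehat{\bbG}$, an index $i\in\{1,\ldots,d_\pi\}$, and a $\lambda$-eigenvector $Z=[z_1,\ldots,z_{d_\pi}]^\top$ of $\overline{\pi(\gamma)}$, and set $\phi=\sum_j z_j\pi_{i,j}$. I would compute $T_w\phi$ directly. By Remark~\ref{remark:leftregular}, $(T_w\phi)(x)=\int_\bbG \gamma(y)\phi(y^{-1}x)\,dy$, and using $\pi(y^{-1}x)=\pi(y)^*\pi(x)$ together with the definition of coefficient functions yields
\[
\pi_{i,j}(y^{-1}x) \;=\; \sum_{k=1}^{d_\pi}\overline{\pi_{j,k}(y)}\,\pi_{i,k}(x).
\]
Since $\gamma$ is real-valued, $\int_\bbG \gamma(y)\overline{\pi_{j,k}(y)}\,dy$ equals $(\overline{\pi(\gamma)})_{k,j}$, and substituting back gives $(T_w\phi)(x)=\sum_k\pi_{i,k}(x)\,(\overline{\pi(\gamma)}Z)_k = \lambda\phi(x)$. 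Equivalently, one could derive this from Theorem~\ref{thm:eigenvector}(iii) by noting that $\overline{Z}$ is a $\lambda$-eigenvector of $\pi(\gamma)$ (eigenvalues being real by Lemma~\ref{lem:pi(f)sa}) and recognising $\overline{\phi}$ as the trace $\overline{\mathrm{Tr}[A^\pi_{\overline Z,i}\pi(x)^*]}$ appearing in that statement.

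For orthogonality I would invoke Schur's relations (Proposition~\ref{prop:Schur-cpt-group}). Coefficient functions belonging to inequivalent representations are orthogonal, which handles the cross terms between different $\pi$. For the same $\pi$ but $i\neq i'$, the factor $\delta_{i,i'}$ in Schur's identity forces all inner products to vanish. Within the same pair $(\pi,i)$, two elements $\sum_j z_j\pi_{i,j}$ and $\sum_j w_j\pi_{i,j}$ have inner product $\tfrac{1}{d_\pi}\langle Z,W\rangle_{\bbC^{d_\pi}}$, which is zero precisely when $Z\perp W$. Since $\pi(\gamma)$ is self-adjoint by Lemma~\ref{lem:pi(f)sa}, so is $\overline{\pi(\gamma)}$, and hence $\mathcal{E}_{\overline{\pi(\gamma)}}$ can (and should) be selected as an orthogonal eigenbasis of $\bbC^{d_\pi}$, which delivers the required orthogonality.

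Finally, for completeness, for each fixed $\pi$ and $i$ the linear map $Z\mapsto \sum_j z_j\pi_{i,j}$ is injective on $\bbC^{d_\pi}$ (the $\pi_{i,j}$ being linearly independent by Schur), so it sends the basis $\mathcal{E}_{\overline{\pi(\gamma)}}$ to a basis of $\mathrm{span}\{\pi_{i,j}:1\leq j\leq d_\pi\}$. Taking the union over $i$ and $\pi$, the proposed set spans the linear hull of all coefficient functions, which is dense in $L^2(\bbG)$ by Peter-Weyl, as recorded in Proposition~\ref{prop:Schur-cpt-group}. The main technical obstacle I anticipate is the bookkeeping in the direct eigenvector computation: reconciling the index convention for $\pi_{i,j}$ with the self-conjugation of $\gamma$ to see that $\overline{\pi(\gamma)}$, rather than $\pi(\gamma)$ itself, acts on the coefficient vector $Z$.
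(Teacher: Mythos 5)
Your proof is correct and follows essentially the same route as the paper's: both reduce $T_w$ to its action on the Peter--Weyl blocks $\mathrm{span}\{\pi_{i,1},\ldots,\pi_{i,d_\pi}\}$, identify that action with the matrix $\overline{\pi(\gamma)}$ (you via the convolution form and the expansion of $\pi_{i,j}(y^{-1}x)$, the paper via a change of variables in the kernel integral and the self-adjointness of $\pi(\gamma)$), and then diagonalize each block with the eigenvectors of $\overline{\pi(\gamma)}$. Your explicit handling of orthogonality and completeness via Schur's relations and Peter--Weyl --- including the observation that the eigenbasis of the self-adjoint matrix $\overline{\pi(\gamma)}$ must be chosen orthogonal within repeated eigenspaces --- is, if anything, slightly more careful than the paper's, which leaves those points implicit in the block decomposition.
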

\begin{proof}
Let $\pi\in\widehat{\bbG}$. First, note that for the coefficient function $\pi_{i,j}\in L^2(\bbG)$, we have
\begin{eqnarray}
T_w(\pi_{i,j})(y)&=&\int_{\bbG} w(x,y)\pi_{i,j}(x)\, dx
=\int_{\bbG} \gamma(xy^{-1})\langle \pi(x)e_i,e_j\rangle\, dx\nonumber\\
&=&\int_{\bbG} \gamma(x)\langle \pi(xy)e_i,e_j\rangle\, dx
=\langle \pi(\gamma)(\pi(y)e_i),e_j\rangle=\langle \pi(y)e_i,\pi(\gamma)e_j\rangle, \label{eq:1-last-prop}
\end{eqnarray}
where in the last equality we used the fact that $\pi(\gamma)$ is self-adjoint. 
Now suppose $\pi(\gamma)=[\alpha_{i,j}]$. So  for every $i$ we have the linear expansion $\pi(\gamma)e_j=\sum_{k=1}^{d_\pi}\alpha_{k,j}e_k$.
Moreover, the equation $\pi^*(\gamma)=\pi(\gamma)$ implies that $\overline{\alpha_{i,j}}=\alpha_{j,i}$. Now Equation \eqref{eq:1-last-prop}, together with the linear expansion of $\pi(\gamma)e_j$ given above, 
implies that
\begin{equation*}
T_w(\pi_{i,j})(y)=\sum_{k=1}^{d_\pi}\overline{\alpha_{k,j}}\langle \pi(y)e_i,e_k\rangle
=(\sum_{k=1}^{d_\pi}\overline{\alpha_{k,j}}\pi_{i,k})(y)
=(\sum_{k=1}^{d_\pi}\alpha_{j,k}\pi_{i,k})(y).
\end{equation*}
So for every $\pi\in \widehat{\bbG}$ and $1\leq i\leq d_\pi$, the set  ${\mathcal S}_{\pi, i}:={\rm span}\{\pi_{i,j}: 1\leq j\leq d_\pi\}$ is an invariant subspace for $T_w$. On the other hand, by Peter-Weyl Theorem, we have the Hilbert space decomposition $L^2(\bbG)\simeq \ell^2\text{-}\oplus_{\pi\in\widehat{\bbG}}\oplus_{i=1}^{d_\pi} {\mathcal S}_{\pi, i}$. Thus $T_w$ is block diagonalized when this Hilbert space decomposition is in place. 

We now proceed to diagonalize each block. 
For every $\pi\in\widehat{\bbG}$, we know by Lemma~\ref{lem:pi(f)sa} that $\pi(\gamma)$ is a self-adjoint matrix. So the same is true for the entry-wise complex conjugate matrix $\overline{\pi(\gamma)}$, and it can be diagonalized using its eigenbasis. 
Let $\lambda$ be an eigenvalue of $\overline{\pi(\gamma)}$, and 
suppose the nonzero vector $Z=[z_j]$ is a $\lambda$-eigenvector, i.e.~$\overline{\pi(\gamma)}Z=\lambda Z$. 
Then,  for $1\leq i\leq d_\pi$ we have,
\begin{eqnarray*}
T_w(\sum_{j=1}^{d_\pi}z_j\pi_{i,j})&=&\sum_{j=1}^{d_\pi}z_j\sum_{k=1}^{d_\pi}\alpha_{j,k}\pi_{i,k}
=\sum_{j=1}^{d_\pi}\left(\sum_{s=1}^{d_\pi} \alpha_{s,j}z_s\right)\pi_{i,j}\\
&=&\sum_{j=1}^{d_\pi}\left(\sum_{s=1}^{d_\pi} \overline{\alpha_{j,s}}z_s\right)\pi_{i,j}=\lambda\sum_{j=1}^{d_\pi} z_j \pi_{i,j},
\end{eqnarray*}
which proves that $\sum_{j=1}^{d_\pi}z_j\pi_{i,j}$ is a $\lambda$-eigenvector of $T_w$.
\end{proof}
As seen in Remark \ref{remark:leftregular}, for any $\phi\in L^2(\bbG)$, $T_w(\phi)=L(\gamma)\phi$.
This can be used to give a more abstract proof  of the previous proposition. We have avoided such abstract proofs in this paper, as the details of the unitary equivalences and the precise change of basis are important for graph signal processing applications. The following example demonstrates how the proposition can be used in such a setting.
\begin{example}[Ranking graphon]\label{exp:S3}
Consider the group of permutations on 3 elements:
$${\mathbb S}_3=\left\{g_1={\rm id},\  g_2=(12),\  g_3=(23),\  g_4=(13), \ g_5=(123),\  g_6=(132)\right\}.$$
The irreducible representations of ${\mathbb S}_3$ can be listed as follows:
\begin{itemize}
    \item[(i)] the trivial representation $\iota:{\mathbb S}_3\to \bbC$, defined as $\iota(g)=1$ for all $g\in {\mathbb S}_3$;
    \item[(ii)] the alternating representation $\tau:{\mathbb S}_3\to \bbC$, assigning to a permutation $g$ the sign of the permutation;
    \item[(iii)] the standard representation $\pi:{\mathbb S}_3\to \U(\bbC^2)$, defined as 
   $$\pi({\rm id})=\begin{bmatrix} 1 & 0\\ 0 & 1\end{bmatrix}, \ 
    \pi((12))=\begin{bmatrix} -\frac{1}{2} & \frac{\sqrt{3}}{2}\\ \frac{\sqrt{3}}{2} & \frac{1}{2}\end{bmatrix},\ 
    \pi((23))=\begin{bmatrix} 1 & 0\\ 0 & -1\end{bmatrix},\ 
    \pi((13))=\begin{bmatrix} -\frac{1}{2} & -\frac{\sqrt{3}}{2}\\ -\frac{\sqrt{3}}{2} & \frac{1}{2}\end{bmatrix},$$
    $$\pi((123))=\begin{bmatrix} -\frac{1}{2} & -\frac{\sqrt{3}}{2}\\ \frac{\sqrt{3}}{2} & -\frac{1}{2}\end{bmatrix},\ 
    \pi((132))=\begin{bmatrix} -\frac{1}{2} & \frac{\sqrt{3}}{2}\\ -\frac{\sqrt{3}}{2} & -\frac{1}{2}\end{bmatrix}.$$
\end{itemize}
As usual, we represent a complex-valued function on ${\mathbb S}_3$ by a vector in $\bbC^6$. Clearly, the (unique) coefficient function of every 1-dimensional representation is simply the representation itself. Equipping $\bbC^2$ with the standard basis 
$\left\{\begin{bmatrix} 1\\ 0\end{bmatrix}, \begin{bmatrix}  0 \\ 1\end{bmatrix}\right\}$, the coefficient functions associated with $\pi$ are given as follows
$$
    \pi_{1,1}=\begin{bmatrix} 1\\ -\frac{1}{2}\\ 1\\ -\frac{1}{2}\\ -\frac{1}{2}\\ -\frac{1}{2}\end{bmatrix}, \ 
    \pi_{2,1}=\begin{bmatrix} 0\\ \frac{\sqrt{3}}{2}\\ 0\\ -\frac{\sqrt{3}}{2}\\ -\frac{\sqrt{3}}{2}\\ \frac{\sqrt{3}}{2}\end{bmatrix}, \ 
    \pi_{1,2}=\begin{bmatrix} 0\\ \frac{\sqrt{3}}{2}\\ 0\\ -\frac{\sqrt{3}}{2} \\ \frac{\sqrt{3}}{2}\\ -\frac{\sqrt{3}}{2}\end{bmatrix}, \ 
    \pi_{2,2}=\begin{bmatrix} 1\\ \frac{1}{2}\\ -1\\ \frac{1}{2}\\ -\frac{1}{2}\\ -\frac{1}{2}\end{bmatrix}.
$$

\blue{The elements of ${\mathbb S}_3$ correspond to the different ways in which three distinct objects can be ranked. }
Consider the Cayley graphon $w:{\mathbb S}_3\times {\mathbb S}_3\to [0,1]$ defined by the Cayley function $\gamma:{\mathbb S}_3\to {\mathbb R}$, \blue{$\gamma=r\delta_{{\rm id}}+p\delta_{(12)}+q\delta_{(23)}$, where $0<p<q<r\leq 1$ and $\delta_g$ denotes the Dirac delta function. This is, in fact the same graphon as used in Example \ref{ex:S3_matrix}.

This graphon can be used to represent sets of individual entities that are distinguishable by the way they rank or prioritize three different items. For example, the entities could be political bloggers linked by `follow' relationships, and the labels from ${\mathbb S}_3$ represent the priority orderings the bloggers assign to a list of three election topics. A model for a graph corresponding to this situation would have the population divided into 6 groups, each labeled with an element of ${\mathbb S}_3$. 
Members of the same group have an identical ranking, which is the same as the group label. 
The link probability between entities is determined only by the set they belong to. The Cayley function $ \gamma=r\delta_{{\rm id}}+p\delta_{(12)}+q\delta_{(23)}$  where $0<p<q<r\leq 1$ then represents the following linking behaviour: entities in the same set are the most likely to link. Entities whose ranked list only differs in their choice for numbers 2 and 3 have second highest link probability. Entities whose ranked list transposes numbers 1 and 2 have a lower link probability. There are no links between groups whose list differ by more than an adjacent transposition.
The sampled graphs in Example \ref{ex:S3_matrix} conform to this model.}

Clearly, we have
$$\iota(\gamma)=\frac{r+p+q}{6},\  \tau(\gamma)=\frac{r-p-q}{6},  \ \pi(\gamma)=\frac{1}{6}\begin{bmatrix} r-\frac{p}{2}+q & \frac{\sqrt{3}p}{2}\\ \frac{\sqrt{3}p}{2} & r+\frac{p}{2}-q\end{bmatrix}.$$
(Here, we have normalized the counting measure on ${\mathbb S}_3$ to obtain a probability space.)
The eigenvalues of $\pi(\gamma)$ are $\frac{ 1}{6}(r\pm\sqrt{p^2+q^2-pq})$. From an easy calculation, we see 
$$\begin{bmatrix} -\frac{p-2q-2\sqrt{p^2-pq+q^2}}{\sqrt{3}p}\\1\end{bmatrix} \mbox{ and } \begin{bmatrix} -\frac{p-2q+2\sqrt{p^2-pq+q^2}}{\sqrt{3}p}\\1\end{bmatrix}$$ are eigenvectors of $\pi(\gamma)$ associated  with the positive and negative eigenvalues respectively.

Appealing to Theorem \ref{thm:eigenvector}, we conclude that the eigenvalues of $T_w$ are \blue{
$$\frac{r+p+q}{6}\ (\mbox{mult.~1}),\  \frac{r-p-q}{6}\ (\mbox{mult.~1}),\ \frac{1}{6}(r\pm\sqrt{p^2+q^2-pq})\ (\mbox{mult.~2 each}). 
$$}
Next, using Proposition~\ref{prop:basis}, we have the following set of eigenvectors for $T_w$, listed to correspond to the above set of eigenvalues. Note that in this case, we have $\pi(\gamma)=\overline{\pi(\gamma)}$, so the condition of Proposition~\ref{prop:basis} is satisfied.
Let $s=-\frac{p-2q-2\sqrt{p^2-pq+q^2}}{\sqrt{3}p}$ and $r=-\frac{p-2q+2\sqrt{p^2-pq+q^2}}{\sqrt{3}p}$.
$$
\iota=\begin{bmatrix} 1\\ 1\\ 1\\ 1\\ 1\\ 1 \end{bmatrix},\
\tau=\begin{bmatrix} 1\\ -1\\ -1\\ -1\\ 1\\ 1 \end{bmatrix}, \
s\pi_{1,1}+\pi_{1,2}= \begin{bmatrix} s\\ -\frac{s}{2}+\frac{\sqrt{3}}{2}\\ s\\ -\frac{s}{2}-\frac{\sqrt{3}}{2}\\ -\frac{s}{2}+\frac{\sqrt{3}}{2}\\ -\frac{s}{2}-\frac{\sqrt{3}}{2}\end{bmatrix}, \ 
s\pi_{2,1}+\pi_{2,2}= \begin{bmatrix} 1\\ \frac{\sqrt{3}s}{2}+\frac{1}{2}\\ -1\\ -\frac{\sqrt{3}s}{2}+\frac{1}{2}\\ -\frac{\sqrt{3}s}{2}-\frac{1}{2}\\ \frac{\sqrt{3}s}{2}-\frac{1}{2}\end{bmatrix},
$$
$$
r\pi_{1,1}+\pi_{1,2}= \begin{bmatrix} r\\ -\frac{r}{2}+\frac{\sqrt{3}}{2}\\ r\\ -\frac{r}{2}-\frac{\sqrt{3}}{2}\\ -\frac{r}{2}+\frac{\sqrt{3}}{2}\\ -\frac{r}{2}-\frac{\sqrt{3}}{2}\end{bmatrix}, \ 
r\pi_{2,1}+\pi_{2,2}= \begin{bmatrix} 1\\ \frac{\sqrt{3}r}{2}+\frac{1}{2}\\ -1\\ -\frac{\sqrt{3}r}{2}+\frac{1}{2}\\ -\frac{\sqrt{3}r}{2}-\frac{1}{2}\\ \frac{\sqrt{3}r}{2}-\frac{1}{2}\end{bmatrix}.
$$
\blue{In Example \ref{ex:S3_matrix}, $r=0.6$, $q=0.3$ and $p=0.1$. The second eigenvalue $\mu_2=\frac16(0.6+0.1\sqrt{7})$ has multiplicity 2, with eigenvectors $s\pi_{1,1}+\pi_{1,2}$ and $s\pi_{2,1}+\pi_{2,2}$ as given above. If this theoretical basis is used to compute the graph Fourier transform, then the Fourier coefficients corresponding to $\mu_2$ are as indicated by the red diamond in Figure \ref{fg:projections}. Using the theoretical basis gives a stable graph Fourier transform for samples of a Cayley graphon. }
\end{example}

Finally, we consider the special case for Cayley graphons where the Cayley function is constant on conjugacy classes. We refer to such graphons as \emph{quasi-Abelian} Cayley graphons; this terminology is an extension of a similar concept for Cayley graphs (\cite{Rockmore}). In this case, 
Proposition \ref{prop:basis} takes a greatly simplified form. Namely, the eigenbasis for $T_w$ derived from the irreducible representations of the group consists simply of all coefficient functions $\pi_{i,j}$. This result is a generalization of an analogue theorem for Cayley graphs; see \cite[Theorem 1.1]{Rockmore} or \cite[Theorem III.1]{sampta} for a proof. We state the result in the following corollary. 
\begin{corollary}\label{cor:constant-on-conjugacy}
Consider a compact group $\bbG$ together with a Cayley function $\gamma:\bbG\to [0,1]$ that is a class function, i.e.~ $\gamma$ is constant on conjugacy classes of $\bbG$ (or equivalently $\gamma(xy)=\gamma(yx)$ for all $x,y\in\bbG$). Let $w$ be the Cayley graphon associated with $\bbG$ and $\gamma$.
Then, for every $\pi\in \widehat{\bbG}$ and $1\leq i,j\leq d_\pi$,
\[ T_w(\pi_{i,j}) = \lambda_\pi \pi_{i,j},\]
where $\lambda_\pi = \frac{1}{d_\pi}{\rm Tr}(\pi(\gamma))$.
\end{corollary}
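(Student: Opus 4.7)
The plan is to combine Schur's lemma with the calculation already performed in the proof of Proposition~\ref{prop:basis}. The key observation is that, when $\gamma$ is a class function, the matrix $\pi(\gamma)$ intertwines $\pi$ with itself. Indeed, for any $x\in\bbG$, using the translation-invariance of Haar measure and the substitution $z=xyx^{-1}$:
\[
\pi(x)\pi(\gamma)\pi(x)^{-1} = \int_{\bbG}\gamma(y)\pi(xyx^{-1})\,dy = \int_{\bbG}\gamma(x^{-1}zx)\pi(z)\,dz = \int_{\bbG}\gamma(z)\pi(z)\,dz = \pi(\gamma),
\]
since $\gamma(x^{-1}zx)=\gamma(z)$. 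Hence $\pi(\gamma)$ commutes with $\pi(x)$ for every $x\in\bbG$.

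Since $\pi$ is irreducible and $\pi(\gamma)$ acts on the finite-dimensional space ${\mathbb C}^{d_\pi}$, Schur's lemma yields $\pi(\gamma)=\lambda_\pi I_{d_\pi}$ for some scalar $\lambda_\pi\in {\mathbb C}$. Taking the trace of both sides gives $\mathrm{Tr}(\pi(\gamma)) = d_\pi\lambda_\pi$, whence $\lambda_\pi = \frac{1}{d_\pi}\mathrm{Tr}(\pi(\gamma))$, as required. Note that $\lambda_\pi$ is automatically real, since Lemma~\ref{lem:pi(f)sa} guarantees $\pi(\gamma)$ is self-adjoint.

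To finish, I invoke the identity derived in the proof of Proposition~\ref{prop:basis}: writing $\pi(\gamma)=[\alpha_{i,j}]$, it was shown that
\[
T_w(\pi_{i,j}) = \sum_{k=1}^{d_\pi}\alpha_{j,k}\pi_{i,k}.
\]
In our setting $\alpha_{j,k}=\lambda_\pi\delta_{j,k}$, so the sum collapses to $T_w(\pi_{i,j}) = \lambda_\pi\pi_{i,j}$, which is the desired identity. There is no real obstacle here; the argument is entirely bookkeeping once Schur's lemma is applied, and the only subtlety (ensuring the scalar is real and independent of the choice of basis indices $i,j$) is automatic from self-adjointness of $\pi(\gamma)$ and from the fact that $\lambda_\pi$ depends only on $\pi$. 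Optionally, one can remark that this recovers the well-known fact that for quasi-Abelian Cayley graphs, the coefficient functions $\pi_{i,j}$ themselves already diagonalize the adjacency operator, without any further change of basis within each irreducible block.
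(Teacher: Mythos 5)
Your proof is correct, and it reaches the key fact --- that $\pi(\gamma)$ is a scalar multiple of the identity --- by a genuinely different route than the paper. You show that $\pi(\gamma)$ intertwines $\pi$ with itself (the conjugation-invariance of $\gamma$ plus unimodularity of the compact group makes the change of variables $z=xyx^{-1}$ legitimate) and then invoke Schur's lemma; the paper instead expands $\gamma$ in the orthonormal basis of characters $\{\chi_\pi\}$ of the space of class functions and computes the matrix entries $\langle\pi(\gamma)e_i,e_j\rangle=\overline{\langle\gamma,\pi_{i,j}\rangle}$ directly via Schur's orthogonality relations, concluding that $\pi(\gamma)=\frac{1}{d_\pi}\langle\chi_\pi,\gamma\rangle I_{d_\pi}$ and only at the very end identifying $\langle\gamma,\chi_\pi\rangle$ with ${\rm Tr}(\pi(\gamma))$. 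Your argument is shorter and more conceptual --- it is the standard fact that the noncommutative Fourier transform of a class function is scalar on each irreducible block --- and it delivers the constant $\frac{1}{d_\pi}{\rm Tr}(\pi(\gamma))$ immediately by taking traces. The paper's computation buys the explicit identification of the eigenvalue as $\frac{1}{d_\pi}\langle\gamma,\chi_\pi\rangle$, i.e.\ as a Fourier coefficient of $\gamma$ against the character, which is in the spirit of the classical quasi-Abelian Cayley graph results it generalizes. Both proofs then finish identically, by feeding the diagonal matrix into the identity $T_w(\pi_{i,j})=\sum_k\alpha_{j,k}\pi_{i,k}$ established in the proof of Proposition~\ref{prop:basis} (equivalently, by citing that proposition), and your observation that $\lambda_\pi$ is real by self-adjointness of $\pi(\gamma)$ is a correct, if optional, sanity check.
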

\begin{proof}
It is known that the set of  characters $\left\{\chi_\pi:=\sum_{i=1}^{d_{\pi}}\pi_{i,i}:\ \pi\in \widehat{\bbG}\right\}$ of a group $\bbG$ forms an orthonormal basis for the subspace of class functions in $L^2(\bbG)$ (see e.g.~\cite[Proposition 5.23]{1995:Folland:HarmonicAnalysis}).
Since $\gamma$ is a class function, we have
\begin{equation}\label{eq:expansion1}
    \gamma=\sum_{\pi\in\widehat{\bbG}}\langle \gamma, \chi_\pi\rangle_{{L^2(\bbG)}}\chi_\pi
    =\sum_{\pi\in\widehat{\bbG}}\sum_{i=1}^{d_\pi}\langle \gamma, \chi_\pi\rangle_{{L^2(\bbG)}}\pi_{i,i}.
\end{equation}
Let $\pi\in\widehat{\bbG}$ be arbitrary. Using Schur's orthogonality relations, Equation \eqref{eq:expansion1} implies that
$\langle \gamma, \pi_{i,j}\rangle_{{L^2(\bbG)}}=0$ if $i\neq j$, and 
$\langle \gamma, \pi_{i,i}\rangle_{{L^2(\bbG)}}=\frac{1}{d_\pi}\langle \gamma, \chi_\pi\rangle_{{L^2(\bbG)}}$. 
This allows us to compute the entries of the matrix $\pi(\gamma)$. Namely, since $\gamma$ is real-valued, we have
\begin{eqnarray*}
\langle \pi(\gamma)e_i,e_j\rangle = \int_{\bbG} \overline{\gamma(x)}\langle \pi(x)e_i,e_j\rangle \, dx= \overline{\langle\gamma, \pi_{i,j}\rangle}_{L^2(\bbG)}
=\left\{\begin{array}{cc}
  0   & i\neq j \\
  \frac{1}{d_\pi}\langle\chi_\pi, \gamma\rangle_{{L^2(\bbG)}}   & i=j
\end{array}\right..
\end{eqnarray*}
In other words, $\pi(\gamma)e_i= \frac{1}{d_\pi}\langle\chi_\pi, \gamma\rangle_{{L^2(\bbG)}} e_i$ for every $1\leq i\leq d_{\pi}$.
Conjugating both sides of the previous equation, we conclude that the standard basis $\{e_i\}_{i=1}^{d_{\pi}}$ is an orthonormal eigenbasis of $\overline{\pi(\gamma)}$ associated with the eigenvalue 
$\frac{1}{d_\pi}\langle \gamma, \chi_\pi\rangle_{{L^2(\bbG)}}$. So by Proposition \ref{prop:basis}, $\cup_{\pi\in\widehat{\bbG}}\{\pi_{i,j}: 1\leq i,j\leq d_\pi\}$ forms an orthogonal eigenbasis for $T_w$ associated with (repeated) eigenvalues $\frac{1}{d_\pi}\langle \gamma, \chi_\pi\rangle_{{L^2(\bbG)}}$. Finally, observe that
\begin{equation*}
\langle \gamma, \chi_\pi\rangle_{{L^2(\bbG)}}=\sum_{i=1}^{d_\pi}\int_{\bbG}\gamma(x)\overline{\pi_{i,i}(x)}\,  dx =\sum_{i=1}^{d_\pi}\int_{\bbG}\gamma(x)\langle\pi(x)e_{i},e_i\rangle \, dx
=\sum_{i=1}^{d_\pi}\langle\pi(\gamma)e_i,e_i\rangle={\rm Tr}(\pi(\gamma)),
\end{equation*}
which finishes the proof. 
\end{proof}

\begin{example}[SO(3)]
Consider the (non-Abelian) group SO(3) of all rotations of the unit ball around an axis through the origin. Thus, each element of SO(3) can be characterized by a unit vector indicating the axis, and a rotation angle. It is well-known that two elements of SO(3) are conjugate if and only if they have the same rotation angle. Thus, if we let the Cayley function $\gamma$ be any function that depends only on the rotation angle, then $\gamma$ satisfies the conditions of Corollary \ref{cor:constant-on-conjugacy}. The corollary now tells us that the coefficient functions $\pi_{i,j}$ provide an eigenbasis for the graphon, which can be used to define a Fourier transform for graphs sampled from the graphon.

A natural Cayley
graphon results if we let $\gamma$ be a sharply declining function of the rotation angle. In that case, two rotations $\sigma$ and $\tau$ in SO(3) have high link probability if $\sigma\tau^{-1}$ has a very small angle. This can be interpreted as $\sigma$ and $\tau$ having a similar effect on the unit ball.
\end{example}

\section{Acknowledgements}
The first author was supported by NSF grant DMS-1902301, while this work was being completed. The second author was supported by NSERC. 
The first two authors initiated  this project while on a Research in Teams visit at the Banff International Research Center. They are
grateful to BIRS for the financial support and hospitality.


\end{document}